\newtheorem{theorem}{Theorem}[section]
\newtheorem{lemma}[theorem]{Lemma}
\newtheorem{example}[theorem]{Example}
\newtheorem{proposition}[theorem]{Proposition}
\newtheorem{exercise}[theorem]{Exercise}
\newtheorem{definition}[theorem]{Definition}
\newtheorem{remark}[theorem]{Remark}
\newtheorem{defn}[theorem]{Definition}
\newtheorem{ex}[theorem]{Example}
\newtheorem{exc}[theorem]{Exercise}
\newcommand*\circled[1]{\tikz[baseline=(char.base)]{
            \node[shape=circle,draw,inner sep=2pt] (char) {#1};}}
\newcommand{\sslash}{\mathbin{/\mkern-6mu/}}
\newcommand{\R}{\mathbb{R}}
\newcommand{\PP}{\mathbb{P}}\newcommand{\A}{\mathbb{A}}\newcommand{\Z}{\mathbb{Z}}\newcommand{\C}{\mathbb{C}}\newcommand{\N}{\mathbb{N}}\newcommand{\T}{\mathbb{T}}
\newcommand{\Q}{\mathbb{Q}}
\title{Enumerative Geometry and Tree-Level Gromov-Witten Invariants}
\author{Reginald Anderson}
\author{Carrie Frizzell}
\email{reginala@uci.edu}
\email{carrie.r.frizzell@gmail.com}
\date{\today}
\begin{document}

\begin{abstract} Here we review background in differential topology related to the calculation of an euler characteristic, and background on localization in equivariant cohomology. We then outline Gromov-Witten invariants in algebraic geometry and give examples of the genus 0 Gromov-Witten potential for $\PP^1, \PP^2$, and a genus $g>0$ Riemann surface. Kontsevich-Manin's recursive formula for $N_d$, the number of degree $d$ rational curves through $3d-1$ points in general position on $\PP^2$ is recovered.


\end{abstract}

\maketitle

\tableofcontents

\section{Introduction}

Through any two distinct points of $\PP^2(\C)$, there exists a unique line. Similarly, through any 5 points in sufficiently general position in $\PP^2$, there exists a unique conic through these 5 points. More generally, we can ask for $N_d$, the number of degree $d$ rational curves in $\PP^2$ through $3d-1$ points. Here, $3d-1$ is the ``right" number of points to consider, since this is the dimension of degree $d$ rational curves of $\PP^2$. We can see the dimension of the space of degree $d$ rational curves to $\PP^2$ by calculating the dimension of a dense open subset, $U$. A degree $d$ rational map in $U$ is given by a 3-tuple of degree $d$ polynomials in $u$ and $v$, for $[u:v]$ homogeneous coordinates on $\PP^1:$

\begin{align*}
    \left[ \sum_{j=0}^d a_j u^j v^{d-j} : \sum_{j=0}^d b_j u^j v^{d-j} : \sum_{j=0}^d c_j u^j v^{d-j} \right]. 
\end{align*}

Here there are $3d+3$ parameters from $\{a_j, b_j, c_j \text{ }|\text{ }0 \leq j \leq d \}$ and we subtract $1$ from homogeneous scaling by $\C^*$ on $\PP^2$. Lastly, we subtract $3$ from the dimension of $PGL(2,\C)$ acting on $\PP^1$ to count distinct curves without automorphisms (so-called ``stable maps"):

\begin{align*}
    \dim U &= 3(d+1) - 1 - 3\\
    &= 3d-1.
\end{align*}

It turns out that for cubics, the number of degree $3$ rational curves through 8 points in sufficiently general position in $\PP^2$ is no longer unique: $N_3=12$. Recursively, these numbers $N_d$ are given by Kontsevich-Manin's \cite{KontsevichManin_1994} formula 

\[ N_d = \sum_{\makecell{d_1+d_2=d, \\ d_1, d_2>0}} N_{d_1} N_{d_2} \left( d_1^2 d_2^2 \binom{3d-4}{3d_1-2} - d_1^3 d_2 \binom{3d-4}{3d_1-1}\right) \] 

 which considers those degree $d$ rational curves which are given by \textbf{stable maps} from $\PP^1$ to $\PP^2$ starting with the data $N_1=1$. The numbers $N_d$ are examples of Gromov-Witten invariants, and can be computed by 

\[ N_d = \int_{\overline{\mathcal{M}}_{0,3d-1}(\PP^2,d)} ev_1^*(P) \cup ev_2^*(P) \cup \cdots \cup ev_{3d-1}^*(P) \] where $P$ is Poincar\'{e} dual to the point class \cite{MSTextbook}. The topic of Gromov-Witten invariants makes contact with various branches of math and physics, such as partition functions in quantum field theories \cite{MSTextbook} and mirror symmetry \cite{coxkatxMSAG}. Before the recursive formula above was discovered, $N_d$ was unknown for $d\geq 5$ \cite{katzenumerative}. This formula follows from associativity in the big quantum cohomology ring $QH^*(\PP^2)$, using the notion of stable degree $d$ rational maps from $\PP^1 \rightarrow \PP^2$, where we have quotiented out by automorphisms of the image curve in $\PP^2$ \cite{kock2006invitation}. More generally, we can ask for the number $N_d$ of rational curves inside of a space $X$ of a given degree $d$ (where $d$ need not be an integer, but instead a homology class) via the notion of genus $0$ stable maps to $X$ whose image has a specified homology class $\beta$ in $H_2(X,\Z)$. Again, these Gromov-Witten invariants become integrals of equivariant cohomology classes over a compactification of a moduli space of genus $0$ stable maps with $n$ marked points. 

It is a classical fact that a nonsingular cubic surface contains exactly 27 lines. However, the number of conics contained in a cubic surface is infinite. To have a finite number of degree $d$ curves for all non-negative integers $d$, Clemens' conjecture suggests that we consider a generic quintic threefold 
\[Q = V(g), \hspace{1cm} g \in \Gamma( \mathcal{O}_{\PP^4}(3)) \]

For the quintic threefold, $N_1=2875$ was found by Schubert in the 19th century, $N_2=609250$ was found by Katz in 1985 \cite{katzenumerative}, and $N_3$ was correctly discovered by physicists Candelas-de la Ossa-Green-Parkes \cite{CANDELAS199121} in 1991, who found a formula for the number of rational degree $k$ curves, which they denoted $n_k$ and which are related to the $N_k$ above in a subtle way. This came from the sum over instantons via Equation (5.13) in \cite{CANDELAS199121}

\[ 5 + \sum_{k=1}^\infty \frac{n_k k^3 e^{2\pi i k t}}{1 - e^{2\pi i k t}} = 5 + n_1 e^{2\pi i k t} + (2^3n_2 + n_1) e^{4\pi i k t} + \cdots \] with $n_1 = 2875, n_2 = 609250,$ and so on. Givental proved this formula in \cite{givental1998mirrorformulaquinticthreefolds}, and one can ask for similar formulas in more general spaces. Due to the combinatorial description of the domain of a genus $0$ stable map, genus $0$ Gromov-Witten invariants are often referred to as ``tree-level." It should be emphasized that this exposition is in not meant to replace the many excellent surveys and works such as Kontsevich-Manin \cite{KontsevichManin_1994}, Behrend-Manin \cite{behrend1995stacksstablemapsgromovwitten}, Manin \cite{maninfrobenius}, and Fulton-Pandharipande \cite{fulton1997notesstablemapsquantum}. Rather, this covers enough background to elucidate some historically significant examples, and is focused on empowering the reader to work examples. Due to the strength of localization arguments in equivariant cohomology, tree-level Gromov-Witten invariants can be computed as sum over trees from torus-fixed loci in the presence of a holomorphic action of $\mathbb{T}$ on $X$, where $\mathbb{T}$ is a finite product of copies of $S^1$ and $\C^*$. As a warm-up for computing Gromov-Witten invariants, we recall some classical calculations of enumerative invariants of well-known spaces.

\section{Warm-up and Review}
\subsection{Chern class argument for 27 lines on non-singular cubic surface} 

\label{chernclassCubicSurface}

As a smooth degree $d$ hypersurface of $\PP^d$ with $\omega_X \cong \mathcal{O}_X(-1)$, cubic surfaces 
 \[ X = V(f), \hspace{1cm} f\in \Gamma( \mathcal{O}_{\PP^3}(3)) \] (i.e., $d=3$) are \textbf{del Pezzo}. A cubic surface $X$ can be constructed as the blow-up of $\PP^2$ in 6 points in sufficiently general position, which leads to a proof of the fact that $X$ contains 27 lines by considering the Picard group of $X$  \cite{hartshorne.alg.geom}. A proof using classical methods is given in \cite{reid2009undergraduate}. For the sake of exposition, here we recall a well-known proof of the fact that a cubic surface contains exactly 27 lines by giving a chern class argument as outlined in \cite{eisenbud20163264}. \\

\begin{remark}
The total chern class \[ c(\mathcal{E}) = 1 + c_1(\mathcal{E}) + c_2(\mathcal{E}) + \cdots \] of a vector bundle $\mathcal{E}$ on a quasi-projective variety $X$ is an algebraic invariant of the Chow ring $A(X)$ such that the following conditions hold:\\
\begin{enumerate}
    \item for a line bundle $\mathcal{L}$, \[ c(\mathcal{L}) = 1 + c_1(\mathcal{L})\] where $c_1(\mathcal{L}) \in A^1(X)$ is the class of the divisor of zeros minus the divisor of poles of a rational section of $\mathcal{L}$ \\
    \item Total chern classes are multiplicative on short exact sequences: for\\
    \[ 0 \rightarrow \mathcal{F} \rightarrow \mathcal{E} \rightarrow \mathcal{G} \rightarrow 0 \] a short exact sequence of vector bundles on $X$, we have \[ c(\mathcal{E}) = c(\mathcal{F})c(\mathcal{G}).\] 
    \item Total chern classes pull back: Given $\phi: Y\rightarrow X$ a morphism of nonsingular varieties, then \[ \phi^*(c(\mathcal{E})) = c(\phi^*\mathcal{E}).\] 
    \item We normalize the top chern class by setting the top chern class of the tangent bundle of $X$ equal to the euler class of the tangent bundle: \[ c_{top}(TX) = e(TX) \]

\end{enumerate}
\end{remark} 

\begin{proof} 

From defining $\mathbb{G}r(1,\PP^3)$ as the space of lines in $\PP^3$, projectivizing gives an isomorphism with the space of $2$-dimensional subspaces of $\A^4$: \[\mathbb{G}r(1, \PP^3) \cong \text{Gr}(2,\A^4). \] As such, $\mathbb{G}r(1,\PP^3)$ carries a tautological rank $2$ vector bundle $\mathcal{S}$, which associates to any line $\Lambda \in \mathbb{G}r(1,\PP^3)$ its associated 2-plane $Q\in Gr(2,\A^4)$. We can see that $\dim(\text{Gr}(2,\A^4))=4$ by taking an open set $U$ of a given $2$-plane $Q$ to be all $2$ planes $Q'$ such that $Q'$ surjects onto $Q$ via the first projection map $$\A^4 \cong Q \oplus Q^\perp \rightarrow Q.$$ That is, \[ U = \{ Q' \text{ }|\text{ } Q' \cap Q^\perp = \{0\} \}. \] Any such $2$-plane $Q'$ can be viewed as a unique map $Hom_{k-\text{v.s.}}(Q,Q^\perp)$ and represented by a $2\times 2$ matrix. In fact, this also describes the tangent bundle $T \text{Gr}(2,\A^4)$ as $Hom_{\text{v. bdl}}(\mathcal{S}, \mathcal{S}^\perp)$, so we have also found that $\dim ( \text{Gr}(2, \A^4))= \dim T\text{Gr}(2,\A^4) = 4$, using the fact that $\text{Gr}(2,\A^4)$ is smooth. A construction of the Chow ring $A(\mathbb{G}r(1,\PP^3))$ describing the intersection theory of subvarieties of $\mathbb{G}r(1,\PP^3)$ using Schubert classes is given in \cite{eisenbud20163264}.  

 Let $F_1(X) \subset \mathbb{G}r(1,\PP^3)$ denote the set of lines in $\PP^3$ which are contained in $X$. By Bezout's theorem, a line (which has degree $1$ and dimension $1$ in $\PP^3$) which is not contained in the degree $3$ surface $X$ intersects $X$ in at most $1\cdot 3=3$ points. So \[ \Lambda \subset X \iff |\Lambda \cap X | \geq 4 \text{ points. }\] Asking that $\Lambda$ intersect $X$ imposes one linear condition on the coefficients of $\Lambda \in \mathbb{G}r(1, \PP^3)$, with $\dim \mathbb{G}r(1,\PP^3)=4$. So $F_1(X)$ has dimension $0$ and codimension $4$ in $\mathbb{G}r(1,\PP^3)$. \\

 \begin{lemma}
     For $V$ a $4$-dimensional vector space, with $\mathcal{S} \subset V \otimes \mathcal{O}_\mathbb{G}$ the tautological rank $2$ subbundle on $\mathbb{G}=\mathbb{G}r(1,\PP(V))$ of lines in $\PP V \cong \PP^3$. A form $f$ of degree $d=3$ on $\PP V$ gives rise to a global section $\sigma_f$ of $Sym^d(\mathcal{S}^*)$, whose zero locus is $F_1(X)$, where $X = V(f)$. Thus, when $F_1(X)$ has the expected codimension $\binom{4}{1} = \text{rank}(\text{Sym}^d(\mathcal{S}^*))$ in $\mathbb{G}$, we have \[ [ F_1(X)] = c_{\binom{4}{1}}\left(\text{Sym}^d(\mathcal{S}^*)\right) \] in $A(\mathbb{G}r(1,\PP^1))$. 
 \end{lemma}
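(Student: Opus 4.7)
My plan is to construct the section $\sigma_f$ explicitly from the form $f$, verify that its zero locus equals $F_1(X)$, and then invoke the standard fact that a global section of a vector bundle whose zero locus has the expected codimension represents the top Chern class.

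First, I would build $\sigma_f$ as follows. A degree-$d$ form $f$ on $\PP V$ corresponds to an element of $H^0(\PP V, \mathcal{O}_{\PP V}(d)) \cong \text{Sym}^d V^*$, and so defines a constant section of the trivial bundle $\text{Sym}^d V^* \otimes \mathcal{O}_\mathbb{G}$. Dualizing the tautological inclusion $\mathcal{S} \hookrightarrow V \otimes \mathcal{O}_\mathbb{G}$ yields a surjection $V^* \otimes \mathcal{O}_\mathbb{G} \twoheadrightarrow \mathcal{S}^*$, which induces a surjection $\text{Sym}^d V^* \otimes \mathcal{O}_\mathbb{G} \twoheadrightarrow \text{Sym}^d \mathcal{S}^*$ on symmetric powers. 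I define $\sigma_f$ to be the image of $f$ under this surjection; by construction it is independent of all choices.

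Next, I would identify the zero locus. The fiber of $\text{Sym}^d \mathcal{S}^*$ over a point $[\Lambda] \in \mathbb{G}$, where $\Lambda \subset V$ is the corresponding $2$-plane, is canonically $\text{Sym}^d(\Lambda^*)$, the space of degree-$d$ forms on $\Lambda$. Unwinding the construction, $\sigma_f([\Lambda])$ is exactly the restriction $f|_\Lambda$. Hence $\sigma_f([\Lambda]) = 0$ precisely when $f|_\Lambda \equiv 0$, which is equivalent to the projective line $\PP \Lambda \subset \PP V$ being contained in $X = V(f)$. This identifies the zero locus of $\sigma_f$, as a subset of $\mathbb{G}$, with $F_1(X)$.

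Finally, I would invoke the general principle: if $\sigma$ is a global section of a rank-$r$ vector bundle $\mathcal{E}$ on a smooth variety whose zero scheme has the expected codimension $r$, then $[Z(\sigma)] = c_r(\mathcal{E})$ in the Chow ring. In our setting $r = \text{rank}(\text{Sym}^d \mathcal{S}^*) = \binom{d+1}{1}$; for $d=3$ this equals $4$, matching the codimension of $F_1(X)$ in $\mathbb{G}$ established in the preceding paragraph. The main obstacle I expect lies in this last step: applying the top-Chern-class formula rigorously requires that $\sigma_f$ cuts out $F_1(X)$ with the correct scheme structure, not just the correct underlying set. For a generic cubic $f$ a Bertini-type argument supplies transversality, and more generally the identity $[Z(\sigma)] = c_r(\mathcal{E})$ holds whenever $Z(\sigma)$ has the expected codimension via the self-intersection formula for the zero section of $\mathcal{E}$ in its total space.
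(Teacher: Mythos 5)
Your proposal is correct and follows essentially the same route as the paper: identify $\sigma_f$ as the image of $f$ under the evaluation map restricting forms to each $2$-plane, observe that $\sigma_f$ vanishes exactly at lines contained in $X$ while the Bezout argument gives the expected codimension $4=\operatorname{rank}(\mathrm{Sym}^3\mathcal{S}^*)$, and then apply the standard ``zero locus of a regular section represents the top Chern class'' principle from \cite{eisenbud20163264}. Your write-up is in fact more careful than the paper's, which leaves the construction of $\sigma_f$ and the scheme-theoretic transversality issue implicit.
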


That is, the top chern class $c_4(\text{Sym}^3(\mathcal{S}^*))$ in the Chow ring $A(\mathbb{G}r(1,\PP^3))$ counts the number of points in $F_1(X)$, which is the number of lines in $X$. Here, the trivial rank $2$ bundle $\mathcal{S}$ does not have global sections, but $\mathcal{S}^*$ does, and the rational section $\sigma_f \in \Gamma(\text{Sym}^3(\mathcal{S}^*))$ defined by $f$ has the property that a root of $\sigma_f$ corresponds to a line $\Lambda$ in $X$. Since $\mathcal{S}$ has rank $2$, so does $\mathcal{S}^*$, and for $\{a,b\}$ a (fiber-wise) basis of $\mathcal{S}^*$ we have a fiber-wise basis of $\text{Sym}^3(\mathcal{S}^*)$ given by 

 \[ \{ a \otimes a \otimes a, a \otimes a \otimes b, a \otimes b \otimes b, b \otimes b \otimes b \} \] 
 so that $\text{Sym}^3(\mathcal{S}^*)$ has rank $4$. One can show that the total chern class of $\mathcal{S}^*$ is 
 \[ c(\mathcal{S}^*) = 1 + \sigma_{1,0} + \sigma_{1,1} \in A(\mathbb{G}r(1,\PP^3))\] 
 in the notation for Schubert classes given in \cite{eisenbud20163264}. Following Eisenbud, here we leave as an 

 \begin{exercise} Using the splitting principle for a rank $2$ vector bundle $\mathcal{E}$ (i.e., that $c(\mathcal{E}) = (1+\alpha)(1+\beta)$ for $\alpha$ and $\beta$ each the first chern class of a line bundle), we have that 

\begin{align*}
c_{top}\text{Sym}^3(\mathcal{E}) = c_4(\text{Sym}^3(\mathcal{E})) &= 18 c_1^2(\mathcal{E})c_2(\mathcal{E}) + 9 c_2^2(\mathcal{E}) 
 \end{align*}
 \end{exercise}
 
Continuing, $\mathcal{E}=\mathcal{S}^*$ with $c_1(\mathcal{S}^*) = \sigma_{1,0}$ and $c_2(\mathcal{S}^*) = \sigma_{1,1}$. This gives, via relations in $A(\mathbb{G}r(1, \PP^3)$ as given in \cite{eisenbud20163264}\\

\begin{align*}
    18 \sigma_{1,0}^2 \sigma_{1,1} + 9 \sigma_{1,1}^2 &= 18(\sigma_{1,1} + \sigma_{2,0} ) \cdot \sigma_{1,1} + 9 \sigma_{2,2}\\
    &= 18(\sigma_{2,2} + 0 ) + 9\sigma_{2,2}\\
    &= 27 \sigma_{2,2}
\end{align*} from $\sigma_{1,1}^2 = \sigma_{2,2}, \sigma_{1,0}^2 = \sigma_{1,1} + \sigma_{2,0},$ and $\sigma_{2,0}\sigma_{1,1}=0$. This gives \[ \deg(c_4(\text{Sym}^3(\mathcal{S}^*))) = 27. \] 

\end{proof}

While the number of lines on a cubic surface is finite, the number of conics on a cubic surface is infinite. To have a space $X$ where the number of degree $d$ rational curves in $X$ is finite, it is helpful to first consider a quintic threefold $Q=V(f)$ for $f\in \Gamma(\PP^4, \mathcal{O}_{\PP^4}(5))$. $Q$ is a compact Calabi-Yau variety since $Q$ is compact as a projective hypersurface in $\PP^4$, the canonical bundle $K_Q \cong \mathcal{O}_Q$, and $c_1(TQ)=0$. Here, the canonical bundle $K_Q \cong \mathcal{O}_Q(5 - 4 - 1)=\mathcal{O}_Q$ as a smooth degree $5$ hypersurface of $\PP^4$ \cite{hartshorne} Exc. 8.4(e).\\

\begin{proposition} For $Q$ a smooth quintic threefold in $\PP^4$, we have $c_1(TQ)=0$. 
    
\end{proposition}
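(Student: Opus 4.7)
The plan is to invoke the normal bundle (adjunction) short exact sequence on $Q$, combined with the Euler sequence on $\PP^4$, and then use multiplicativity of total Chern classes on short exact sequences (property (2) of the remark) to read off $c_1(TQ)$.

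First I would write down the normal bundle sequence for the smooth hypersurface $Q \subset \PP^4$:
\[ 0 \to TQ \to T\PP^4|_Q \to N_{Q/\PP^4} \to 0. \]
Because $Q = V(f)$ with $f \in \Gamma(\PP^4, \mathcal{O}_{\PP^4}(5))$, the conormal sheaf identifies as $\mathcal{I}_Q/\mathcal{I}_Q^2 \cong \mathcal{O}_Q(-5)$, so $N_{Q/\PP^4} \cong \mathcal{O}_Q(5)$, giving $c(N_{Q/\PP^4}) = 1 + 5h$ where $h := c_1(\mathcal{O}_{\PP^4}(1))|_Q$.

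Next I would compute $c(T\PP^4|_Q)$ by restricting the Euler sequence
\[ 0 \to \mathcal{O}_{\PP^4} \to \mathcal{O}_{\PP^4}(1)^{\oplus 5} \to T\PP^4 \to 0, \]
which, by multiplicativity and pullback (properties (2) and (3)) together with the normalization $c(\mathcal{O}_{\PP^4}) = 1$, yields $c(T\PP^4) = (1+H)^5$ for $H = c_1(\mathcal{O}_{\PP^4}(1))$. Restricting to $Q$ gives $c(T\PP^4|_Q) = (1+h)^5$.

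Finally, multiplicativity on the normal bundle sequence gives
\[ c(TQ) = \frac{(1+h)^5}{1+5h} = (1+h)^5\bigl(1 - 5h + 25h^2 - \cdots\bigr), \]
and reading off the degree-one term, $c_1(TQ) = 5h - 5h = 0$, which is the claim. There is essentially no obstacle here; the one point requiring care is the identification $N_{Q/\PP^4} \cong \mathcal{O}_Q(5)$, which comes from the fact that $Q$ is cut out (scheme-theoretically, in codimension one and smoothly) by a single section of $\mathcal{O}_{\PP^4}(5)$.
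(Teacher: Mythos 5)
Your proposal is correct and follows essentially the same route as the paper: the normal bundle sequence with $N_{Q/\PP^4} \cong \mathcal{O}_Q(5)$ via adjunction, the Euler sequence giving $c(T\PP^4|_Q) = (1+h)^5$, and multiplicativity yielding $c(TQ) = (1+h)^5/(1+5h)$ with vanishing degree-one term. The only difference is that you spell out the conormal-sheaf identification explicitly, which the paper leaves implicit in the word ``adjunction.''
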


\begin{proof} 
To show that $c_1(TQ)=0$, we note that the short exact sequence

\[ 0 \rightarrow TQ \rightarrow T\PP^4\bigg|_Q \rightarrow N_Q \rightarrow 0 \] 

becomes \[ 0 \rightarrow TQ \rightarrow T\PP^4\bigg|_Q \rightarrow \mathcal{O}_{\PP^4}(5) \rightarrow 0 \] using adjunction, since $Q$ is smooth. This implies

\begin{align*}
    c(TQ) &= \frac{(1+h\bigg|_Q)^5}{1+5h\bigg|_Q}\\
    &= \frac{1 + 5h\bigg|_Q + 10 (h\bigg|_Q)^2 + 10 (h\bigg|_Q)^3 }{1 + 5h\bigg|_Q}\\
    &= 1 + 10(h\bigg|_Q)^2 - 40 (h\bigg|_Q)^3
\end{align*}

where $h$ is the hyperplane class of $\PP^4$, so that $c_1(TQ)=0$ as claimed. \end{proof}


A similar chern class argument as above for a nonsingular cubic surface shows that a nonsingular quintic 3-fold contains 2875 lines. Here, we give this well-known proof of this fact, relying on the structure of $A( \mathbb{G}r(1, \mathbb{P}^3))$ as described in \cite{eisenbud20163264}.

\begin{proof} For $Q = V(F)$ with $F \in \Gamma( \PP^4, \mathcal{O}_{\PP^4}(5))$, we first note that $Gr(2,\A^5) \cong \mathbb{G}r(1, \PP^4)$ via projectivizing, and that $\mathbb{G}r(1, \PP^4)$ has a tautological rank $2$ vector bundle $\mathcal{S}$ as for the cubic. Viewing $T \text{Gr}(2, \A^5) \cong \text{Hom}_{\text{v. bdl}}(\mathcal{S}, \mathcal{S}^\perp)$ similarly shows $\dim \text{Gr}(2,\A^5) = 2 \cdot 3 = 6$. As $Q = V(f)$ from $f\in \Gamma(\mathcal{O}_{\PP^4}(5)),$ we have a corresponding global section $\sigma_f \in \Gamma( \text{Sym}^5(\mathcal{S}^*)$, with $\text{rank}(\text{Sym}^5(\mathcal{S}^*))=6$. Since the line $\Lambda \subset \PP^4$ satisfies \[ \Lambda \subset Q \iff |\Lambda \cap Q| \geq 6 \text{ points } \] from B\'{e}zout's theorem, we see that $\Lambda$ (viewed as a point of $\mathbb{G}r(1,\PP^4)$ being contained in $Q$ imposes $6$ linear conditions on $\text{Gr}(1, \PP^4)$, which is again 6-dimensional over $\C$. Letting $F_1(Q)$ denote the variety of lines contained in $Q$, we have that \[ [F_1(Q)] = c_6(\text{Sym}^5(\mathcal{S}^*)) \] in the Chow ring $A^\bullet( \mathbb{G}(1, \PP^4))$. Direct computation in Macaulay2 yields that the top chern class of $\text{Sym}^5(\mathcal{S}^*) = 2875$, using the ring structure of $A^\bullet( \mathbb{G}(1, \PP^4))$:\\

\FloatBarrier
\begin{figure}[h]
\includegraphics[width=\textwidth, trim = {1cm 5.2cm 12cm 4cm}, clip]{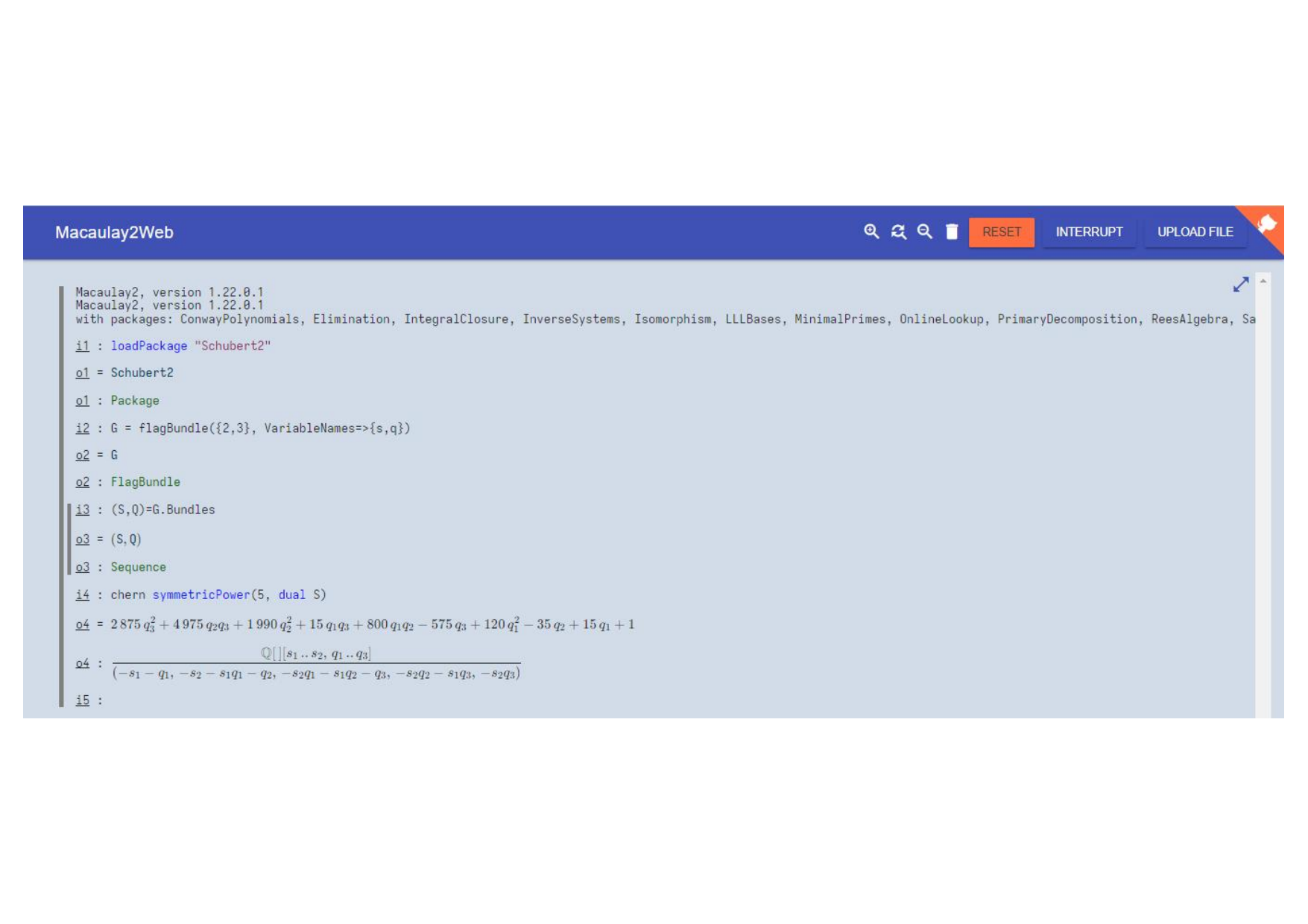}
\caption{Macaulay2 output for computing 2875 lines on the quintic threefold}
\end{figure} 
\FloatBarrier

\end{proof}

\section{Differential Geometry Overview and Equivariant cohomology}

\subsection{Differential Geometry Overview}
Often in topology, if we are given a diffeomorphic action of a compact Lie group $G$ on a manifold $M$, then we can understand the topology of $M$ better by considering the action of $G$ on $M$. Consider $M=S^2 \hookrightarrow \R^3$ and let $G=S^1$ be the compact Lie group acting on $S^2$ by rotation around the $z-$axis:

\FloatBarrier
\begin{figure}[h]
\centering

\definecolor{figureRed}{RGB}{255,55,40}
\definecolor{figureBlue}{RGB}{0,83,221}

\resizebox{.8\textwidth}{!}{%
\begin{tikzpicture}[
  line cap=round,
  line join=round,
  >=latex,
  every node/.style={font=\small}
]

\node[figureRed,align=center,font=\large] at (-2.55,0.15)
  {$2\;\mathbb{T}$-fixed points\\[-1pt]
   $\implies\;\chi(S^2)=2$};

\begin{scope}[xshift=0.5in]

\draw[black,line width=0.75pt] (0.8,0) circle[radius=2.15];

\draw[black,dashed,line width=0.65pt] (0.8,-2.75) -- (0.8,2.75);

\draw[figureBlue,line width=0.7pt,dashed]
  (-0.89,1.33) arc[start angle=180,end angle=360,x radius=1.69,y radius=0.22];
\draw[figureBlue,line width=0.7pt]
  (-0.89,1.33) arc[start angle=180,end angle=0,x radius=1.69,y radius=0.22];

\fill[figureBlue] (0.8,2.15) circle[radius=3.1pt];
\fill[figureBlue] (0.8,-2.15) circle[radius=3.1pt];

\draw[figureRed,->,line width=0.7pt]
  (0.30,2.55)
  arc[start angle=180,end angle=505,x radius=0.50,y radius=0.18];

\draw[figureRed,->,line width=0.7pt]
  (0.30,-2.55)
  arc[start angle=180,end angle=505,x radius=0.50,y radius=0.18];

\draw[figureRed,->,line width=0.7pt]
  (-3.75,1.15)
  .. controls (-3.10,1.95) and (-2.20,2.70) .. (-0.10,2.55)
  .. controls (-0.10,2.40) and (-0.02,2.38) .. (0.20,2.32);

\draw[figureRed,->,line width=0.7pt]
  (-3.70,-0.90)
  .. controls (-3.05,-1.65) and (-2.20,-2.75) .. (-0.05,-2.55)
  .. controls (-0.10,-2.40) and (-0.02,-2.38) .. (0.20,-2.32);

\draw[figureBlue,->,line width=0.7pt]
  (4.00,1.56) -- (2.58,1.35);

\node[figureBlue,anchor=west,align=left,font=\small] at (4.02,1.58)
  {$\mathbb{T}$-orbit\\[-1pt] of a non-fixed point};

\end{scope}

\end{tikzpicture}%
}

\caption{A localization in equivariant cohomology argument implies that $\chi(S^2)=2$.}
\end{figure}
\FloatBarrier

If we embed $S^2 \hookrightarrow \R^3$ via the inclusion \[ S^2 = \{(x,y,z) \text{ }|\text{ } x^2+y^2+z^2=1\}, \] this action corresponds to the Morse function $h: S^2 \rightarrow \R$ given by the height function \[ h(x,y,z) = z. \] The function $h$ is Morse since it has isolated, non-degenerate critical points $\{(0,0,\pm 1)\}$ \cite{audin2013morse}. From the Morse function $h$, we can form the generic section $s \in \Gamma(S^2, TS^2)$ given by \[ s(p) = \frac{ -\nabla h(p) }{|| \nabla h(p) ||} \] 

Here, the \textbf{Poincar\'{e}-Hopf} theorem yields the Euler characteristic of $S^2$:\\

\[ \chi(S^2) = \sum_{s(p)=0} \text{ind}(p) \] where the Poincar\'{e}-Hopf index of $p$ is given by the degree of the map from $\partial B_n(p) \rightarrow S^{n-1}$ for $B_n(p)$ a neighborhood of $p$ in $M$. Here, both the north and south poles $(0,0,\pm 1)$ have Poincar\'{e}-Hopf index 1, so we see that $\chi(S^2)=2$. Alternatively, we can use the Gauss-Bonnet theorem

\[ 2\pi \chi(M) = \int_M K dV \] for $K$ the Gaussian curvature, and $dV$ the volume form. For $M=S^2$, we have that $K(p)$, the Gaussian curvature at the point $p$, is the product of the max and min sectional curvatures, which are each $\frac{1}{r}$. This gives 

\begin{align*}
    2\pi \chi(S^2) &= \int_{S^2} \frac{1}{r^2} r^2 \sin\phi d\theta \wedge d\phi \\
    &= \int_{\theta=0}^{2\pi} \int_{\phi=0}^\pi \sin\phi d\theta d\phi = 2\pi(2) 
\end{align*} so that $\chi(S^2)=2$.\\

One method of calculating the euler characteristic of an orientable smooth real manifold $M$ which is useful for generalizing to methods involving chern classes in the holomorphic setting, is to integrate the euler class of the tangent bundle: \[ \chi(M) = \int_{M} e[TM] \] 

Here we work out an example for $M=S^2$. 

\begin{proof}

Consider the two-sphere with radius $\rho$, denoted $S^2(\rho)$, with metric 

\begin{align*} g_{ij} = \rho^2 \left(\begin{matrix} \sin^2\phi & 0 \\ 0 & 1 \end{matrix} \right)\end{align*} and set $\rho=1$. Here, $\theta$ is azimuthal and $\phi$ is angle of declination from the north pole, $N$. Then the Christoffel symbols are 

$$\Gamma_{\theta\theta}^\theta = 0, \Gamma_{\theta\phi}^\theta=\Gamma_{\phi\theta}^\theta=\frac{\cos\phi}{\sin\phi}, \Gamma_{\phi\phi}^\theta=0,$$

$$\Gamma_{\theta\theta}^\phi = \sin\phi\cos\phi, \Gamma_{\theta\phi}^\phi=\Gamma_{\phi\theta}^\phi=0, \Gamma_{\phi\phi}^\phi=0.$$ Let $ \mathcal{B} = \{\partial_\theta, \partial_\phi\}$ denote an ordered basis for the tangent space of each point of $S^2(\rho)$ away from the poles. From the convention $$a_X e_j = \sum \omega_i^j(X) e_i$$ and $F = dA + A \wedge A$ for $A$ the connection matrix from arranging the $\omega_i^j$ into a matrix $A$, we have 


$$A = \left( \begin{matrix} \cot\phi d\phi & \cot\phi d\theta \\ -\sin\phi\cos\phi d\theta & 0 \end{matrix} \right) $$ so that 

$$dA = \left(\begin{matrix} 0 & -\csc^2(\phi) d\phi \wedge d\theta \\ -\cos^2(\phi) + \sin^2(\phi) d\phi \wedge d\theta & 0 \end{matrix}\right) $$

and

$$ A \wedge A = \left( \begin{matrix} \cot\phi d\phi & \cot\phi d\theta \\ -\sin\phi\cos\phi d\theta & 0 \end{matrix} \right) \wedge \left( \begin{matrix} \cot\phi d\phi & \cot\phi d\theta \\ -\sin\phi\cos\phi d\theta & 0 \end{matrix} \right) $$

gives $$ F = dA + A \wedge A = \left( \begin{matrix} 0 & 1 d\theta \wedge d\phi \\ -\sin^2\phi d\theta \wedge d\phi & 0 \end{matrix}\right). $$ 

In the orthonormal basis $\tilde{B} = \{ \frac{\partial_\theta}{\sin\phi}, \partial_\phi \}$ we have that $\tilde{e}_1 = \frac{1}{\sin\phi}e_1, \tilde{e_2} = 1\cdot e_2$ gives the change-of-basis matrix $$ a = \left[ \begin{matrix} \frac{1}{\sin\phi} & 0 \\ 0 & 1 \end{matrix}\right]$$ to say $\tilde{e} = ea$ as row vectors. The curvature matrix transforms as $$ \tilde{\Omega} = a^{-1}\Omega a $$ which gives 

\begin{align*} \tilde{\Omega} &= \left( \begin{matrix} \sin\phi & 0 \\ 0 & 1 \end{matrix}\right) \left(\begin{matrix} 0 & 1 \\ -\sin^2\phi & 0 \end{matrix}\right) \left(\begin{matrix} \frac{1}{\sin\phi} & 0 \\ 0 & 1 \end{matrix}\right)  \\
&= \left( \begin{matrix} 0 & \sin\phi\\ -\sin\phi & 0 \end{matrix}\right) \otimes d\theta \wedge d\phi \end{align*}

whose Pfaffian is 

\begin{align*}
Pf(\tilde{\Omega}) &= \frac{1}{2} ( \tilde{\Omega}_{1,2} - \tilde{\Omega}_{2,1} )\\
&= \frac{1}{2} ( \sin\phi - (-\sin\phi) ) = \sin\phi \end{align*} tensored with $d\theta \wedge d\phi$ gives

\begin{align*}
\chi(M) = \int_{M} e[TM]= \int_{S^2} Pf(\frac{1}{2\pi} \tilde{\Omega}) &= \int_{S^2} e[TS^2] \\
&= \int_{S^2} \frac{\sin\phi}{2\pi}d\theta \wedge d\phi\\
&= \frac{1}{2\pi} 2(2\pi) = 2 = \chi(S^2). \end{align*}

\end{proof}

We can compute $\chi(S^2)$ by using Morse homology as follows. Given a Morse function $h: S^2 \rightarrow \R$ (with isolated critical points), the Morse lemma implies that locally, $h$ can be written as \[ h(p) + (-x_1^2 - x_2^2 - \cdots -x_\iota^2) + (x_{\iota+1}^2 + \cdots + x_n^2) \] in a coordinate chart $U$ around $p$. 

\begin{definition} The Morse index of $p$ is $\iota$, the number of coefficients above with a minus sign. 
\end{definition} 

Now we build the Morse complex $C_h$ with points in index $i$ placed in homological degree $i$, and $\partial: (C_h)_i \rightarrow (C_h)_{i-1}$ given by (negative) gradient trajectory flow lines

\FloatBarrier
\begin{figure}[h]
\centering
\begin{tikzpicture}[
    scale=1.3,
    >={Stealth[length=2.5mm, width=1.5mm]},
    flowline/.style={
        decoration={markings, mark=at position 0.35 with {\arrow{>}}, mark=at position 0.75 with {\arrow{>}}},
        postaction={decorate},
        blue!80!black,
        thick
    },
    backline/.style={
        decoration={markings, mark=at position 0.35 with {\arrow{>}}, mark=at position 0.75 with {\arrow{>}}},
        postaction={decorate},
        dashed,
        gray!70,
        thin
    }
]

    \draw[thick] (0,0) circle (2cm);
    \draw[thick] (-2,0) arc (180:360:2 and 0.5); 
    \draw[dashed, gray!80] (2,0) arc (0:180:2 and 0.5); 

    
    \draw[backline] (0,2) arc (90:270:0.4 and 2);
    \draw[backline] (0,2) arc (90:270:1.25 and 2);
    \draw[backline] (0,2) arc (90:-90:0.4 and 2);
    \draw[backline] (0,2) arc (90:-90:1.25 and 2);

    \draw[flowline] (0,2) -- (0,-2);
    \draw[flowline] (0,2) arc (90:270:0.85 and 2);
    \draw[flowline] (0,2) arc (90:270:1.7 and 2);
    \draw[flowline] (0,2) arc (90:-90:0.85 and 2);
    \draw[flowline] (0,2) arc (90:-90:1.7 and 2);

    \filldraw[blue] (0,2) circle (2pt) node[above, text=black] {$N=p$};
    \filldraw[blue] (0,-2) circle (2pt) node[below, text=black] {$S=q$};

    \begin{scope}[shift={(5, 1.8)}]
        \draw[thick] (-1, -1) rectangle (1, 1);
        \draw[->, thick] (0,0) -- (1.4, 0);
        \draw[->, thick] (0,0) -- (-1.4, 0);
        \draw[->, thick] (0,0) -- (0, 1.4);
        \draw[->, thick] (0,0) -- (0, -1.4);
        \node[right] at (1.6, 0.4) {\Large $T_p S^2$};
        \node[right] at (1.6, -0.4) {index 2};
    \end{scope}

    \begin{scope}[shift={(5, -1.8)}]
        \draw[thick] (-1, -1) rectangle (1, 1);
        \draw[<-, thick] (0,0) -- (1.4, 0);
        \draw[<-, thick] (0,0) -- (-1.4, 0);
        \draw[<-, thick] (0,0) -- (0, 1.4);
        \draw[<-, thick] (0,0) -- (0, -1.4);
        \node[right] at (1.6, 0.4) {\Large $T_q S^2$};
        \node[right] at (1.6, -0.4) {index 0};
    \end{scope}

\end{tikzpicture}
\caption{Building the Morse complex on $S^2$}
\label{fig:torusfibersS2}
\end{figure}
\FloatBarrier

This gives the Morse complex\[ C_h: 0 \rightarrow \Z\left<p\right> \stackrel{\cdot 0}{\rightarrow} 0 \stackrel{\cdot 0}{\rightarrow} \Z\left< q \right> \rightarrow 0 \] with $\Z\left<p\right>$ in homological degree $2$, and $\Z\left<q\right>$ in homological degree $0$. This gives 

\begin{align*}
    H_*(C_h) = \begin{cases} \Z & i=0,2\\ 0 &\text{ else} \end{cases}
\end{align*}

and

\begin{align*}
    \chi(S^2) &= \sum_{i=0}^2 (-1)^i b_i\\
    &= 2.
\end{align*}

To see that homology is a homotopy invariant, we can also compute the Morse complex of the deformed sphere. 
\begin{figure}[h]
\centering
\begin{tikzpicture}[
    scale=0.95,
    line cap=round,
    line join=round,
    >={Stealth[length=1.8mm,width=1.2mm]},
    pqflow/.style={
        blue!70!black,
        thick,
        decoration={markings, mark=at position 0.58 with {\arrow{>}}},
        postaction={decorate}
    },
    pqback/.style={
        blue!40,
        thin,
        dashed,
        decoration={markings, mark=at position 0.58 with {\arrow{>}}},
        postaction={decorate}
    },
    pwflow/.style={
        red!75!orange,
        thick,
        decoration={markings, mark=at position 0.58 with {\arrow{>}}},
        postaction={decorate}
    },
    pwback/.style={
        red!65!orange,
        thick,
        dashed,
        decoration={markings, mark=at position 0.58 with {\arrow{>}}},
        postaction={decorate}
    },
    wqflow/.style={
        purple!85!magenta,
        thick,
        decoration={markings, mark=at position 0.58 with {\arrow{>}}},
        postaction={decorate}
    },
    wqback/.style={
        purple!55!magenta,
        thick,
        dashed,
        decoration={markings, mark=at position 0.58 with {\arrow{>}}},
        postaction={decorate}
    },
    contour/.style={gray!70,thin},
    tbox/.style={black,thick},
    every node/.style={font=\small}
]

\begin{scope}[shift={(-3.0,0)}]

    \def\blobpath{(0,-3.30)
        .. controls (-2.60,-3.10) and (-2.55,-1.00) .. (-2.10,0.70)
        .. controls (-1.78,2.10) and (-1.18,2.95) .. (-0.68,2.85)
        .. controls (-0.12,2.75) and (-0.42,1.10) .. (-0.52,0.20)
        .. controls (-0.58,-0.25) and (-0.26,-0.62) .. (0,-0.55)
        .. controls (0.26,-0.62) and (0.58,-0.25) .. (0.52,0.20)
        .. controls (0.42,1.10) and (0.12,2.75) .. (0.68,2.85)
        .. controls (1.18,2.95) and (1.78,2.10) .. (2.10,0.70)
        .. controls (2.55,-1.00) and (2.60,-3.10) .. (0,-3.30)}

    \shade[ball color=cyan!5,opacity=0.28] \blobpath;
    \draw[blue!70!black,very thick] \blobpath;

    \draw[contour] (-2.02,0.96)
        arc[start angle=180,end angle=360,x radius=0.82,y radius=0.19];
    \draw[contour,dashed] (-2.02,0.96)
        arc[start angle=180,end angle=0,x radius=0.82,y radius=0.19];

    \draw[contour] (0.38,0.96)
        arc[start angle=180,end angle=360,x radius=0.82,y radius=0.19];
    \draw[contour,dashed] (0.38,0.96)
        arc[start angle=180,end angle=0,x radius=0.82,y radius=0.19];

    \draw[contour] (-2.05,-2.20)
        arc[start angle=180,end angle=360,x radius=2.05,y radius=0.44];
    \draw[contour,dashed] (-2.05,-2.20)
        arc[start angle=180,end angle=0,x radius=2.05,y radius=0.44];

    \filldraw[black] (-0.68,2.85) circle (2pt) node[above] {$p_1$};
    \filldraw[black] ( 0.68,2.85) circle (2pt) node[above] {$p_2$};
    \filldraw[black] (0,-0.55) circle (2pt);
    \node at (0.36,-0.10) {$w$};
    \filldraw[black] (0,-3.30) circle (2pt) node[below] {$q$};

    \begin{scope}
        \clip \blobpath;

        \draw[pqflow] (-0.68,2.85)
            .. controls (-1.95,1.78) and (-2.08,-1.55) .. (0,-3.30);
        \draw[pqflow] (0.68,2.85)
            .. controls (1.95,1.78) and (2.08,-1.55) .. (0,-3.30);

        \draw[pqback] (-0.68,2.85)
            .. controls (-1.28,1.88) and (-1.18,-1.82) .. (0,-3.30);
        \draw[pqback] (0.68,2.85)
            .. controls (1.28,1.88) and (1.18,-1.82) .. (0,-3.30);

        \draw[pwflow] (-0.68,2.85)
            .. controls (-0.66,2.00) and (-0.34,0.48) .. (0,-0.55);
        \draw[pwflow] (0.68,2.85)
            .. controls (0.66,2.00) and (0.34,0.48) .. (0,-0.55);

        \draw[pwback] (-0.68,2.85)
            .. controls (-0.88,1.72) and (-0.56,0.16) .. (0,-0.55);
        \draw[pwback] (0.68,2.85)
            .. controls (0.88,1.72) and (0.56,0.16) .. (0,-0.55);

        \draw[wqflow] (0,-0.55)
            .. controls (-0.24,-1.34) and (-0.16,-2.42) .. (0,-3.30);

        \draw[wqback] (0,-0.55)
            .. controls (0.30,-1.30) and (0.22,-2.46) .. (0,-3.30);
    \end{scope}

\end{scope}

\begin{scope}[shift={(3.6,2.10)}]
    \draw[tbox] (-0.72,-0.72) rectangle (0.72,0.72);
    \draw[{Stealth}-{Stealth}, blue!80, thick] (-0.95,0) -- (0.95,0);
    \draw[{Stealth}-{Stealth}, blue!80, thick] (0,-0.95) -- (0,0.95);
    \node[right=1.15cm,align=left] at (0,0) {$T_{p_1}X$\\ index 2};
\end{scope}

\begin{scope}[shift={(7.8,2.10)}]
    \draw[tbox] (-0.72,-0.72) rectangle (0.72,0.72);
    \draw[{Stealth}-{Stealth}, blue!80, thick] (-0.95,0) -- (0.95,0);
    \draw[{Stealth}-{Stealth}, blue!80, thick] (0,-0.95) -- (0,0.95);
    \node[right=1.15cm,align=left] at (0,0) {$T_{p_2}X$\\ index 2};
\end{scope}

\begin{scope}[shift={(5.7,-0.95)}]
    \draw[tbox] (-0.72,-0.72) rectangle (0.72,0.72);
    \draw[-{Stealth}, red!80, thick] (-0.95,0) -- (0,0);
    \draw[-{Stealth}, red!80, thick] (0.95,0) -- (0,0);
    \draw[{Stealth}-{Stealth}, blue!80, thick] (0,-0.95) -- (0,0.95);
    \node at (0,0) {$\times$};
    \node[right=1.15cm,align=left] at (0,0) {$T_wX$\\ index 1};
\end{scope}

\begin{scope}[shift={(5.7,-4.00)}]
    \draw[tbox] (-0.72,-0.72) rectangle (0.72,0.72);
    \draw[-{Stealth}, red!80, thick] (-0.95,0) -- (0,0);
    \draw[-{Stealth}, red!80, thick] (0.95,0) -- (0,0);
    \draw[-{Stealth}, red!80, thick] (0,0.95) -- (0,0);
    \draw[-{Stealth}, red!80, thick] (0,-0.95) -- (0,0);
    \node at (0,0) {$\times$};
    \node[right=1.15cm,align=left] at (0,0) {$T_qX$\\ index 0};
\end{scope}

\end{tikzpicture}
\caption{Building the Morse complex for a homotopy deformation of $S^2$}
\end{figure}

This gives the Morse complex

\[ 0 \rightarrow \makecell{\Z\left<p_1\right> \\\newline \oplus \\\newline \Z\left<p_2\right> } \stackrel{\left( \begin{matrix} 1&  1 \end{matrix}\right)}{\rightarrow} \Z\left<w\right> \stackrel{\cdot 0}{\rightarrow } \Z\left<q\right> \rightarrow 0 \] 

More generally, we have the following exercise. 

\begin{exercise}
    Compute $\chi(\Sigma_g)$ using the Morse complex $C_f$ from the height function for the Morse-Smale, tilted genus $g$ Riemann surface $\Sigma_g$.  
\end{exercise}

Furthermore, $S^2$ is diffeomorphic to $\PP^1_\C$ as a smooth manifold, and the function $h$ corresponds to the moment map of toric topology \cite{C-L-S}. Here we see torus fibers of $h$ as given by Figure~\ref{fig:torusfibersS2}. 

Here, the image $h(\PP^1) = [-1,1]$ can be viewed as the toric polytope $P$, whose inner normal fan gives the fan for $\PP^1(\C)$ \cite{C-L-S}.

Up to re-scaling $P$, we also have that the symplectic moment map $\phi: \PP^1 \rightarrow \R$ can be written as \[ \phi: \PP^1\rightarrow \R, \hspace{2cm} \phi([z_0:z_1]) = \frac{ |z_1|^2}{|z_0|^2+|z_1|^2}. \]

with image $[0,1] \subset \R$. 
Topologically, $S^2$ has a cell decomposition \[ S^2 = \{pt \} \sqcup e^2_\R \] where $e^2_\R$ is an open $2-$ball, whose boundary is identified to $\{pt\}$ in the construction of $S^2$. This corresponds to the Schubert cell decomposition of $\PP^1$, which we describe below in Section~\ref{schubcell}.

\FloatBarrier
\begin{figure}[h]
\centering
\resizebox{0.92\textwidth}{!}{%
\begin{tikzpicture}[
    x=1cm,y=1cm,
    line cap=round,
    line join=round,
    font=\small
]

\node at (0,2.55) {$\displaystyle
\PP^1_{\C}=\{[1\!:\!0]\}\sqcup U,\qquad
U=\{[z\!:\!1]\mid z\in\C\}\cong \A^1_{\C}\cong e^2_{\R}$};

\begin{scope}[shift={(-4.3,0.05)}]
    \shade[inner color=white, outer color=purple!18] (0,0) circle (1.05);
    \draw[purple!85!magenta, line width=1.1pt] (0,0) circle (1.05);

    \draw[black, line width=0.9pt]
        (-1.05,0) arc[start angle=180,end angle=360,x radius=1.05,y radius=0.24];
    \draw[purple!55!magenta, dashed, dash pattern=on 5pt off 4pt, line width=0.9pt]
        (-1.05,0) arc[start angle=180,end angle=0,x radius=1.05,y radius=0.34];

    \node at (0,1.38) {$\PP^1_{\C}$};
    \fill[blue!85] (0,-1.05) circle (0.08);
    \node at (0,-1.42) {$[1:0]$};
\end{scope}

\node at (-2.25,0.05) {$\displaystyle =$};
\node at (-0.55,0.05) {$\displaystyle \left\{\;\bullet\;\right\}$};
\fill[blue!85] (-0.55,0.05) circle (0.08);
\node at (-0.55,-1.42) {$[1:0]$};
\node at (1.05,0.05) {$\displaystyle \sqcup$};

\begin{scope}[shift={(4.0,0.05)}]
    \shade[inner color=white, outer color=purple!16] (0,0) circle (1.2);
    \draw[purple!85!magenta, dashed, dash pattern=on 5pt off 4pt, line width=1.1pt]
        (0,0) circle (1.2);
    \node at (0,0) {$U$};
    \node at (0,-1.50) {$\{[z\!:\!1]\mid z\in\C\}$};
\end{scope}

\end{tikzpicture}%
}
\caption{Schubert cell decomposition for $\PP^1$}
\end{figure}
\FloatBarrier

By viewing $S^2$ instead as a complex manifold, we see that \[ \chi(S^2) = \chi(\PP^1_\C) = \int_{\PP^1} c_{\text{top}}(T\PP^1) \]


\subsection{Equivariant Cohomology} Excellent references for this topic include Tu \cite{Tu+2020}, \cite{MSTextbook}, and \cite{coxkatxMSAG}.

\begin{defn} Recall that a group $G$ acts freely on $M$ if the stabilizer of every point $x\in M$ \[ \text{Stab}(x) = \{g\in G\text{ }|\text{ }gx=x\} \] equals $\{1\}$ for all $x\in M$. 
\end{defn} 

Let $G$ be a compact connected Lie group and $M$ a $C^\infty$ $G-$manifold. If $G$ does not act on $M$ freely (i.e., in the presence of stabilizers), then the quotient $\faktor{M}{G}$ can be poorly behaved. To correct this, we let $E$ be a contractible space on which $G$ acts freely. Then

\begin{defn} The \textbf{$G$-equivariant cohomology of $M$} is the ``usual" singular cohomology of the homotopy quotient \[ M_G = \faktor{(E\times M)}{G}.\]  \end{defn} 

\begin{remark} To get $EG$, one considers a principal $G$-bundle $EG \rightarrow BG$ with weakly contractible total space such that $G$ acts freely on $EG$. Now the classifying space of $G$ is given by $BG = \faktor{EG}{G}$. \end{remark}

Localization in equivariant cohomology can be described as follows. For $\phi$ a G-equivariant cohomology class, let $F$ run over the $G-$fixed loci of $M$. Let $M$ be a compact holomorphic $G$-manifold (this implies orientable, so that we have an Euler class and Poincar\`{e} Duality to push forward cohomology classes). Then we have: 

\begin{defn} The \textbf{localization formula}

\label{eqn: localiz}
\[ \int_M \phi = \sum_F \int_F \frac{ i^*\phi}{e(N_{F/M})}. \] \end{defn} 

\begin{remark} This is the \textbf{integral version} of localization. \end{remark} 

The key example that we'll use comes from $G=\T$ a finite product of copies of $S^1$ and $\C^*$ acting holomorphically on $M$. If $M$ has a finite number $n$ of $\T$-fixed points, then $\chi(M)=n$. Note that $\chi(M) = e(T_M)$. There is a natural $\T$-action on $T_M$, inducing a bundle on $M_\T$, the equivariant tangent bundle. By the localization formula Equation~\ref{eqn: localiz}, \[ \int_M e(T_M) = \sum_F \int_F \left( \frac{ e(N_{F/M})}{e(N_{F/M})} \right) = \sum_{j=1}^n 1 = n. \]

\begin{remark} When the CW complex $X$ has the structure of finite cellular complex, then \[ \chi(X) = \sum_{i=0}^{\text{dim} X} (-1)^i \cdot \{ \# i-\text{dimensional cells} \}\] (using the real dimension here). \end{remark}

\begin{ex} A finite graph $G=(V,E)$ has Euler characteristic \[ \chi(G) = (\# \text{ vertices }) - ( \# \text{ edges }) = \sum_{i=0}^1 (-1)^i\cdot \left( \# i-\text{cells of }G \right). \] \end{ex}

\begin{ex} $S^1 \times S^1$. Let $\T = S^1$ act on $M = \T^2 = S^1 \times S^1$ by rotations 


\FloatBarrier
\begin{figure}[h]
\centering
\resizebox{0.72\textwidth}{!}{%
\begin{tikzpicture}[
  line cap=round,
  line join=round,
  gridfront/.style={
    draw=gray!48,
    line width=.38pt
  },
  gridback/.style={
    draw=gray!30,
    line width=.34pt,
    dash pattern=on 2.3pt off 2.1pt
  },
  orbitfront/.style={
    draw=blue!78!black,
    line width=1.9pt
  },
  orbitback/.style={
    draw=blue!45,
    line width=1.55pt,
    dash pattern=on 5.5pt off 4.2pt
  }
]

\def\Rmaj{3.10}
\def\rmin{1.20}
\def\xs{1.10}
\def\ys{0.85}
\def\q{0.573576} 
\def\s{0.819152} 

\shade[
  top color=gray!1,
  bottom color=gray!22
]
  plot[
    domain=0:360,
    samples=220,
    variable=\u
  ]
  (
    {\xs*(\Rmaj
      +\rmin*\q/sqrt(\q*\q+\s*\s*sin(\u)*sin(\u)))*cos(\u)},
    {\ys*(\q*\Rmaj*sin(\u)
      +\rmin*sin(\u)/sqrt(\q*\q+\s*\s*sin(\u)*sin(\u)))}
  )
  -- cycle;

\foreach \u in {20,50,80,110,140,160}{
  \draw[gridback]
    plot[
      domain=0:360,
      samples=90,
      variable=\v
    ]
    (
      {\xs*(\Rmaj+\rmin*cos(\v))*cos(\u)},
      {\ys*(\q*(\Rmaj+\rmin*cos(\v))*sin(\u)
        +\s*\rmin*sin(\v))}
    );
}

\foreach \u in {200,230,260,290,320,340}{
  \draw[gridfront]
    plot[
      domain=0:360,
      samples=90,
      variable=\v
    ]
    (
      {\xs*(\Rmaj+\rmin*cos(\v))*cos(\u)},
      {\ys*(\q*(\Rmaj+\rmin*cos(\v))*sin(\u)
        +\s*\rmin*sin(\v))}
    );
}

\foreach \v in {-120,-75,-35,0,35,75,120,180}{
  \draw[gridback]
    plot[
      domain=0:180,
      samples=90,
      variable=\u
    ]
    (
      {\xs*(\Rmaj+\rmin*cos(\v))*cos(\u)},
      {\ys*(\q*(\Rmaj+\rmin*cos(\v))*sin(\u)
        +\s*\rmin*sin(\v))}
    );

  \draw[gridfront]
    plot[
      domain=180:360,
      samples=90,
      variable=\u
    ]
    (
      {\xs*(\Rmaj+\rmin*cos(\v))*cos(\u)},
      {\ys*(\q*(\Rmaj+\rmin*cos(\v))*sin(\u)
        +\s*\rmin*sin(\v))}
    );
}

\fill[white]
  plot[
    domain=0:360,
    samples=220,
    variable=\u
  ]
  (
    {\xs*(\Rmaj
      -\rmin*\q/sqrt(\q*\q+\s*\s*sin(\u)*sin(\u)))*cos(\u)},
    {\ys*(\q*\Rmaj*sin(\u)
      -\rmin*sin(\u)/sqrt(\q*\q+\s*\s*sin(\u)*sin(\u)))}
  )
  -- cycle;

\draw[
  gray!72,
  line width=.82pt
]
  plot[
    domain=0:360,
    samples=220,
    variable=\u
  ]
  (
    {\xs*(\Rmaj
      +\rmin*\q/sqrt(\q*\q+\s*\s*sin(\u)*sin(\u)))*cos(\u)},
    {\ys*(\q*\Rmaj*sin(\u)
      +\rmin*sin(\u)/sqrt(\q*\q+\s*\s*sin(\u)*sin(\u)))}
  )
  -- cycle;

\draw[
  gray!62,
  line width=.68pt
]
  plot[
    domain=0:360,
    samples=220,
    variable=\u
  ]
  (
    {\xs*(\Rmaj
      -\rmin*\q/sqrt(\q*\q+\s*\s*sin(\u)*sin(\u)))*cos(\u)},
    {\ys*(\q*\Rmaj*sin(\u)
      -\rmin*sin(\u)/sqrt(\q*\q+\s*\s*sin(\u)*sin(\u)))}
  )
  -- cycle;

\def\vfield{-12}

\draw[
  orbitback,
  postaction={decorate},
  decoration={
    markings,
    mark=between positions .09 and .92 step .155 with {
      \arrow{Stealth[length=3.0mm,width=2.0mm]}
    }
  }
]
  plot[
    domain=0:180,
    samples=120,
    variable=\u
  ]
  (
    {\xs*(\Rmaj+\rmin*cos(\vfield))*cos(\u)},
    {\ys*(\q*(\Rmaj+\rmin*cos(\vfield))*sin(\u)
      +\s*\rmin*sin(\vfield))}
  );

\draw[
  orbitfront,
  postaction={decorate},
  decoration={
    markings,
    mark=between positions .07 and .93 step .135 with {
      \arrow{Stealth[length=3.2mm,width=2.15mm]}
    }
  }
]
  plot[
    domain=180:360,
    samples=120,
    variable=\u
  ]
  (
    {\xs*(\Rmaj+\rmin*cos(\vfield))*cos(\u)},
    {\ys*(\q*(\Rmaj+\rmin*cos(\vfield))*sin(\u)
      +\s*\rmin*sin(\vfield))}
  );

\end{tikzpicture}%
}
\end{figure}
\FloatBarrier

Then 

\begin{align*}
    \chi(\T^2) &= \# \T \text{ -fixed points }\\
    &= 0
\end{align*} \end{ex}

\begin{remark} The fact that $\T^2$ is a compact, connected Lie group over $\R$ implies that $\chi(\T^2)=0$. \end{remark}

\textbf{Returning to Example 2}:\\

If we view $\PP^1_\C$ as the set of lines through the origin in $\C^2$, then 

\begin{align*} 
    \PP^1_\C &= \faktor{\C^2 \setminus \{(0,0)\} }{\C^*}
\end{align*} where $v \sim w$ if $\exists \lambda\in \C^*$ s.t. $v = \lambda \cdot w$ for $v,w\in \C^2$. Then $\PP^1$ admits a $\T=\C^*$ action via

\begin{align*}
    \lambda\cdot [x_0:x_1] &= [\lambda x_0: \lambda x_1] \\
    &= [x_0: \lambda x_1]
\end{align*} for $\lambda \in \C^*$, with fixed points $pt_1 = [1:0]$ and $pt_2=[0:1]$. \\

\subsection{Schubert Cell Decomposition}
\label{schubcell}

Here, we start with a question: Which subsets of $\PP^1$ flow to each $\T$-fixed point in the limit: \[ \lim_{\lambda \rightarrow \infty} \lambda \cdot z = pt_1  \hspace{1cm} ?\]

\textbf{A}: In $U_0 = \{x_0=0\}$ with coordinate $\frac{x_1}{x_0}=z$ and the action $\lambda \cdot z = \lambda z$, we have

\begin{align*}
    \lim_{\lambda\rightarrow \infty} \lambda \cdot z = [1:0] &\iff z=0 ,\\
    \lim_{\lambda \rightarrow \infty} \lambda \cdot z = [0:1] &\iff z\neq 0. 
\end{align*} This also recovers the \textbf{cellular decomposition of Schubert cells} 

\FloatBarrier
\begin{figure}[h]
\centering
\resizebox{0.60\textwidth}{!}{%
\begin{tikzpicture}[
  line cap=round,
  line join=round,
  frontflow/.style={
    draw=blue!78!black,
    line width=1.25pt,
    postaction={decorate},
    decoration={
      markings,
      mark=at position .38 with
        {\arrow{Stealth[length=2.8mm,width=1.9mm]}},
      mark=at position .70 with
        {\arrow{Stealth[length=2.8mm,width=1.9mm]}}
    }
  },
  backflow/.style={
    draw=blue!42,
    line width=1.05pt,
    dash pattern=on 4.5pt off 3.5pt,
    postaction={decorate},
    decoration={
      markings,
      mark=at position .39 with
        {\arrow{Stealth[length=2.6mm,width=1.75mm]}},
      mark=at position .69 with
        {\arrow{Stealth[length=2.6mm,width=1.75mm]}}
    }
  }
]

\def\Rsphere{3.00}

\shade[
  ball color=gray!8
] (0,0) circle (\Rsphere);

\begin{scope}
  \clip (0,0) circle (\Rsphere);
  \foreach \a in {-54,-18,18,54}{
    \draw[backflow]
      plot[
        domain=-84:84,
        samples=90,
        variable=\t
      ]
      ({\Rsphere*cos(\t)*sin(\a)},
       {\Rsphere*sin(\t)});
  }
\end{scope}

\draw[
  gray!55,
  line width=.60pt,
  dash pattern=on 3.4pt off 2.8pt
]
  (-\Rsphere,0)
  arc[
    start angle=180,
    end angle=0,
    x radius=\Rsphere,
    y radius=.52
  ];

\begin{scope}
  \clip (0,0) circle (\Rsphere);
  \foreach \a in {-72,-36,0,36,72}{
    \draw[frontflow]
      plot[
        domain=-84:84,
        samples=90,
        variable=\t
      ]
      ({\Rsphere*cos(\t)*sin(\a)},
       {\Rsphere*sin(\t)});
  }
\end{scope}

\draw[
  gray!72,
  line width=.78pt
]
  (-\Rsphere,0)
  arc[
    start angle=180,
    end angle=360,
    x radius=\Rsphere,
    y radius=.52
  ];

\draw[
  gray!78,
  line width=.85pt
] (0,0) circle (\Rsphere);

\fill[blue!82!black] (0,-\Rsphere) circle (.105);
\node[
  below=7pt,
  text=blue!72!black
] at (0,-\Rsphere)
  {$\mathrm{pt}_1=[1\!:\!0]$};

\draw[
  blue!68!black,
  line width=.72pt,
  -{Stealth[length=2.5mm,width=1.7mm]}
]
  (4.45,.15)
  to[out=165,in=8]
  (2.84,.48);

\node[
  anchor=west,
  align=left,
  text=blue!68!black
] at (4.52,.08)
  {$B^2$\\[-.15em]
   \footnotesize $2$--cell};

\end{tikzpicture}%
}
\end{figure}
\FloatBarrier

as before.

\begin{remark} Restricting $\T$ to an $S^1=U(1)$ action recovers the $S^1$ action on $S^2$ via rotations. \end{remark} 

\textbf{Another way to see that $\chi(\PP^1)=2$} is to view $\PP^1_\C$ as the Grassmanian of 1-dim subspaces of $\C^2$: \[ \PP^1 = Gr_\C(1,2) \] 

admits a natural action of $GL(2,\C) \supset \T = \{\text{ invertible diagonal matrices} \}$. As a set, points of $Gr(1,2)$ are given in homogeneous coordinates by \\

$1\times 2$ matrices $[b_1, b_2]$ up to multiplication on the left by an invertible $1\times 1$ matrix\\

Here: The $1\times 2$ matrix represents a basis of a 1-dimensional subspace $\Lambda \subset \C^2$. Let $\T$ act on $Gr_\C(1,2)$ via

\begin{align*}
    \left[ \begin{matrix} b_1 & b_2 \end{matrix} \right]\left[ \begin{matrix} \lambda & 0 \\ 0 & \lambda^2 \end{matrix}\right]
\end{align*} 

with weights of $\C^*$ action $(1,2)$. Then $\T$-fixed 1-planes of $\C^2$ (i.e., $\T$-fixed points of $Gr(1,2)$ ) are those 1-planes admitting basis of the form 

\[ \{e_1\} \hspace{1cm} \text{ or } \hspace{1cm} \{e_2\} \] 

\textbf{Again, since there are 2 $\T$-fixed points, we see that $\chi(Gr_\C(1,2))=2$}. 

In the limit, the subset of $Gr_\C(1,2)$ flowing to $\{e_2\}$ are lines with non-zero projection to $<e_2>$, i.e.: those lines which admit a basis vector $ae_1 + e_2$ (so this subset is 1-dimensional over $\C$).\\

The subset of $Gr(1,2)$ flowing to $\{e_1\}$ are those $1-$planes admitting basis of the form $b=e_1$, which is just a point in $Gr_\C(1,2)$. 

\subsection{ $\PP^n$ (Euler characteristic and Schubert cell decomposition)}
Generalizing $\PP^1$, we have $\PP^n = \faktor{\C^{n+1}\setminus(0,0,\dots, 0) }{\C^*}$. Letting $\T = \C^* \curvearrowright \PP^n$ via 

\begin{align*} \lambda \cdot [x_0: x_1: \cdots : x_n] &= [\lambda x_0 : \lambda^2 x_1: \cdots : \lambda^{n+1}x_n]\\
&= [x_0: \lambda x_1: \cdots : \lambda^n x_n] \end{align*} we see that there are $n+1 \text{ }\T$-fixed points \[ [1:0:\cdots 0], [0:1:0:\cdots 0], \dots [0:\cdots 0:1] \] so $\chi(\PP^n)=n+1$. 

We can also see the Schubert cell decomposition from this $\T$-action.\\

\textbf{Q}: What flows to $e_i = [0:0:\cdots 0:1:0:\cdots 0]$ in the limit, where $e_i$ has a $1$ in the ith position?\\

\textbf{A}: In $U_i = \{x_i\neq 0 \}$ with coordinates $(\frac{x_0}{x_n}, \cdots, \hat{\frac{x_i}{x_n}}, \cdots \frac{x_n}{x_i} ) = (w_1, \dots, w_n)$, we have that $pt_i$ is the origin in $U_i$, and 

\begin{align*}
    \lambda \cdot  [x_0: \cdots :x_n] &= [x_0: \lambda x_1: \cdots :\lambda^n x_n ] &\text{ gives }\\
    \lambda \cdot (w_1, \dots, w_n) &= \lambda \cdot \left( \frac{ x_0}{x_i}, \cdots, \frac{x_{i-1}}{x_i}, \frac{x_{i+1}}{x_i}, \dots, \frac{x_n}{x_i} \right)\\
    &= \left( \frac{x_0}{\lambda^i x_i}, \frac{\lambda x_1}{\lambda^i x_i}, \dots, \frac{ \lambda^n x_n}{\lambda^i x_i} \right)
\end{align*} gives 

\begin{align*}
    &\lim_{\lambda \rightarrow \infty} \lambda \cdot w = pt_i = (0,0,\dots, 0) \text{ in }U_i \\[.2cm]
    \iff &\frac{x_{i+1}}{x_i} = w_i = 0, \frac{x_{i+2}}{x_i} = w_{i+1} = 0, \dots, \frac{x_n}{x_i} = w_n=0 \end{align*} 

    gives an $i$-dimensional (over $\C$) Schubert cell that flows to $pt_i$ in the limit. To clarify what happens for $pt_0$ and $pt_n$: \\
    \begin{itemize}
        \item Only $pt_0 = [1:0:\cdots :0]$ flows to $[1:0:\cdots 0]$ in the limit, and \\
        \item $\PP^n \setminus \{pt_0, \dots, pt_{n-1} \}$ flows to $pt_n = [0:0:\cdots :1]$ in the limit gives an $n$-dimensional cell (over $\C$). 
    \end{itemize}

So we see that $\PP^n$ has one Schubert cell of $\C$-dimension $k$ (so $\R$-dimension $2k$) for each $k, 0\leq k\leq n$. 

\subsection{ $Gr_\C(k,n)$ } 

$Gr_\C(k,n)$ admits a natural action of $GL(n,\C) \supset T = \{\text{ invertible diagonal }n\times n \text{ matrices }\}$. Here, $Gr(k,n)$ can be viewed as a manifold (or variety) of $k$-dimensional subspaces of $\C^n$ (or $\A^n)$. \\

$Gr(k,n)$ has homogeneous coordinates: \begin{itemize} \item List a $k\times n$ matrix giving basis for a $k$-plane $\Lambda$ with basis vectors as \textbf{rows}. \\
\item Consider this $k\times n$ matrix up to equivalence by action of $GL(k,\C)$ on the left:

\begin{align*}
    \left( \begin{matrix} & & \\ & \text{ k }\times \text{k invt'ble}  & \\ & & \end{matrix} \right) \cdot \left(\begin{matrix}  a_{11} & \cdots & a_{1n}\\ \vdots & & \vdots \\ a_{k,1} & \cdots & a_{k,n} \end{matrix}\right) &= \left( \begin{matrix} b_{11} & \cdots & b_{1n}\\ \vdots & & \vdots \\ b_{k1} & \cdots & b_{kn} \end{matrix} \right)
\end{align*}
implies \[  \left(\begin{matrix}  a_{11} & \cdots & a_{1n}\\ \vdots & & \vdots \\ a_{k,1} & \cdots & a_{k,n} \end{matrix}\right) \sim \left( \begin{matrix} b_{11} & \cdots & b_{1n}\\ \vdots & & \vdots \\ b_{k1} & \cdots & b_{kn} \end{matrix} \right) \] 

in homogeneous coordinates. \end{itemize} Now $Gr_\C(k,n) \curvearrowleft \T$ via diagonal invertible matrices on the right: 

(By abuse, I'll say): Let $\T=\C^*$ act on $Gr(k,n)$ with weight $(1,2,\cdots, n)$: For $\lambda\in \C^*$ and $\Lambda \in Gr(k,n)$, 

\[ \lambda \cdot \Lambda = \left(\begin{matrix}  a_{11} & \cdots & a_{1n}\\ \vdots & & \vdots \\ a_{k,1} & \cdots & a_{k,n} \end{matrix}\right) \cdot \left( \begin{matrix} \lambda & & & \\ & \lambda^2 & &  \\ & & \ddots  & \\ & & &  \lambda^n \end{matrix} \right) \]

Then the only $\T$-fixed points are those admitting a basis which I'll denote $\{e_I\}$ for $I\subseteq \{1,2,\dots, n\}$ with $|I|=k$. This gives \[ \chi(\text{Gr}_\C(k,n)) = \binom{n}{k}. \]

To see the Schubert cell decomposition, it's helpful to give an example first: We'll consider $Gr_\C(2,4)$. \\

Using the torus action $Gr(2,4) \curvearrowleft \T$ from $\C^n \curvearrowleft GL(n,\C)$ and $\T = \{\text{invertible diagonal }n\times n \text{ matrices } \} \subset GL(n,\C)$, via 

\[ \lambda \cdot \Lambda = \left( \begin{matrix} a_1 & a_2 & a_3 & a_4 \\ b_1 & b_2 & b_3 & b_4 \end{matrix}\right) \cdot \left( \begin{matrix} \lambda & & & \\ & \lambda^2 & & \\ & & \lambda^3 & \\ & & & \lambda^4 \end{matrix}\right) \] 

for $\{v = \sum a_i e_i, w= \sum b_i e_i\}$ a basis for $\Lambda$, we have $\chi(Gr_\C(k,n))=\binom{n}{k}$ gives $\binom{4}{2}=6$ torus-fixed points 

\[ < e_1, e_2>, <e_1,e_3>, <e_1,e_4>, <e_2,e_3>, <e_2,e_4>, <e_3,e_4> = \{pt_1, \dots pt_6\} \]

which we'll write as

\[ \{1,2\}, \{1,3\}, \{1,4\}, \{2,3\}, \{2,4\}, \{3,4\}. \]

To get a Schubert cell decomposition of $Gr(2,4)$, we consider the subset of $Gr(k,n)$ which flows to $pt_i$ in the limit: $\lim_{\lambda\rightarrow\infty} \lambda \cdot x = pt_i$ gives

\begin{itemize}
    \item $\lim_{\lambda\rightarrow\infty} \lambda\cdot x = \{1,2\} \iff \Lambda$ admits a basis of the form $\faktor{ \left( \begin{matrix} \_ & * & \cdot & \cdot \\ * & \cdot & \cdot & \cdot \end{matrix}\right)}{GL(2,\C)}$ where $*$ denotes a \textbf{non-zero} entry and $\_$ can be $0$, gives a $3-3=0$-dimensional cell. 
    \item $\lim_{\lambda\rightarrow \infty} \lambda\cdot x = \{1,3\} \iff  \Lambda$ admits a basis of the form $\faktor{ \left( \begin{matrix} \_ & \_ & * & \cdot \\ * & \cdot & \cdot & \cdot \end{matrix}\right)}{GL(2,\C)}$ gives $4-3=1$ dimensional cell\\
    \item $\lim_{\lambda\rightarrow\infty} \lambda\cdot x = \{1,4\} \iff \faktor{ \left( \begin{matrix} \_ & \_ & \_ & * \\ * & \cdot & \cdot & \cdot \end{matrix}\right)}{GL(2,\C)}$ gives $5-3=2$ dimensional cell\\
    \item $\lim_{\lambda\rightarrow\infty}\lambda\cdot x  = 
    \{2,3\} \iff \faktor{ \left( \begin{matrix} \_ & \_ & * & \cdot \\ \_ & * & \cdot & \cdot \end{matrix}\right)}{GL(2,\C)}$ 
    gives a $5-3=2$ dimensional cell. \\
    \item $\lim_{\lambda\rightarrow\infty} \lambda\cdot x = \{2,4\} \iff \faktor{ \left( \begin{matrix} \_ & \_ & \_ & * \\ \_ & * & \cdot & \cdot \end{matrix}\right)}{GL(2,\C)}$ gives a $6-3=3$-dimensional cell. \\
    \item $\lim_{\lambda\rightarrow\infty} \lambda \cdot x = \{3,4\} \iff \faktor{ \left( \begin{matrix} \_ & \_ & \_ & * \\ \_ & \_ & * & \cdot \end{matrix}\right)}{GL(2,\C)}$ gives a $7-3=4$-dimensional cell. This gives the Betti numbers \[ b_0=1, b_2=1, b_4=2, b_6=1, b_8=1, b_i=0 \text{ for }i\text{ odd. }\] over $\R$. 

    \begin{remark} This agrees with the general formula \[ b_{2i} = \lambda_{n,k}(i) \] where $\lambda_{n,k}(i)$ is the number of partitions of $i$ into $\leq n-k$ parts, each of size $\leq k$. The above Betti numbers correspond to the partitions $\{0=0\}, \{1=1\}, \{2=2+0=1+1\}, \{3=2+1\}, \{4=2+2\}$, respectively. \end{remark}
\end{itemize}

A related exercise here comes from \cite{milnor1974characteristic}.

\begin{exc} Show that the \# of $r$-cells in $Gr_\C(k,n)$ is equal to the \# of $r-$cells in $Gr_\C(n-k,n)$. \end{exc}

\begin{proof} First, we ask how to get an $r$-dimensional cell: $\dim Gr(k,n) = k(n-k) \implies 0\leq r \leq k(n-k)$. To fix a convention, let $k <  n-k$. Next, let $\Lambda \in Gr(n,k)$ live in an $r-$dimensional cell $e_I^r$ with $|I|=k$, flowing to a given $\T$-fixed point. Write $I = \{i_1>i_2> \cdots >i_k\}$. Then $\Lambda$ admits a basis (after row-reducing) with $1$ in pivot columns $I$ (can clear 0's above and $0$ in all entries for columns greater than $I$. This just gives anti-diagonal of $1's$ when restricting to columns in $I$ (or, if we like, we can have the $k \times k$ identity on these columns instead). In row $1$, we have $1$ fixed in column $i_1$ with $0$ in all other $I$ columns, and all other columns before $i_1$ outside of $I$ can be anything. This gives dimension $i_1 - k$ from row $1$. Row 2 has a $0$ in column $i_1$ and a $1$ in column $i_2$ with $i_2 - (k-1)$ degrees of freedom. And so on, until in row $k$ we have $i_k - 1$ degrees of freedom. We can write this as (here, superscripts denote row, and subscripts denote column)

\[ \left( \begin{array}{ccccccc|c} \cdots a^1_{i_{k}-1} & 0 & \cdots & 0 & \cdots & 1^{i_1} & \cdots & 0  \\ \cdots a^2_{i_{k-1}}  & 0 & \cdots & 1^{i_2} & \cdots  & 0 & \cdots& 0 \\
\vdots &  \vdots & \cdots  & \vdots &  & \vdots &   \cdots & 0  \\
\cdots a^k_{i_{k}-1} & \cdots  1^{i_k} & \cdots & 0 & \cdots &  0 & \cdots& 0 \end{array} \right) \] 

where the $\_ $ indicates any value (in columns outside of $I$ and below $i_k$), $1$'s appear exactly in $I$, and all entries above $i_1$ are $0$. 

Thus \begin{align*} r &= (i_1 - k) + (i_2 - (k-1)) + \cdots + (i_k -1 )  \\
&= i_1 + i_2 + \cdots + i_k - (1+2+\cdots + k) \\
&= i_1 + i_2 + \cdots + i_k - \frac{k(k+1)}{2}. \end{align*} 

So again, let $\Lambda$ live in an $r$-dimensional Schubert cell $e_I^r$. We saw that $\Lambda$ admits a basis as above. Since the orthogonal complement to the row space is the null space of the matrix $\Lambda$ (i.e., the kernel), $\Lambda^\perp$ admits a basis of the form: standard basis column vectors for columns in $[n]\setminus I$ (so that we have $1$'s on the \textbf{antidiagonal} restricted to pivot columns) with the entries in each row from $I$ determined by living in the kernel of $\Lambda$ (i.e., dot product with each row of $\Lambda$ should be $0$). Thus $\Lambda^\perp$ can be written

\[ \Lambda^\perp  = \left( \begin{array}{ccccc|cc}
0 &0 &0 &0 &0 & 0 & 1 \\ 0 &0& 0& 0& 0 & 1 & 0  \\ 0 &0 &0 & 0 & \cdots -a^1_{i_1-1} & 0 & 0  \\  \vdots & \vdots & \vdots & \vdots & \vdots & \hspace{.2cm}  \vdots  \\
0 & 0 & 1 & -a^{k-1}_{([n]\setminus I)_2} & \cdots  -a^1_{([n]\setminus I)_2} & 0 & 0 \\
1 & -a^k_{([n] \setminus I)_1}\cdots & 0   & -a^{k-1}_{([n]\setminus I)_1} \cdots & -a^1_{([n]\setminus I)_1} & 0 & 0  \end{array}\right)  \]

where $([n]\setminus I)_1$ is the first column from the left in $[n]-I$, and here we have basis column vectors in the $n-k$ pivot columns $[n]\setminus I$. 
The variable entries $-a^i_j \in \C$ now correspond to exactly an $r$ $\C-$dimensional Schubert cell of $Gr_\C(n-k,n)$ under the $\T$-action of $\C^*$ on $Gr(n-k,n)$ corresponding to the weight $(n,n-1, \dots, 1)$:\\

\[ Gr(n-k,n) \curvearrowleft \T \text{ via } \lambda \cdot \Lambda^\perp = \left( \begin{matrix} a_{1,1} & \cdots & a_{1,n} \\ \vdots & \ddots & \vdots \\ a_{{n-k},1} & \cdots & a_{(n-k),n} \end{matrix} \right) \cdot \left( \begin{matrix} \lambda^n & & & \\ & \lambda^{n-1} & & \\ & & \ddots & \\ & & & \lambda^1 \end{matrix} \right) \] 

It is maybe helpful to clarify the above general statement with examples to see the correspondence between $\Lambda, \Lambda^\perp$, and Schubert cells. Below, we sometimes use the convention that $I$ has $k\times k$ identity matrix (rather than antidiagonal of $1$'s). 

\subsubsection{$Gr(1,3)$}
For $X = Gr_\C(1,3)$ with $\T$-action written as \[ \left( \begin{matrix}  b_1 & b_2 & b_3 \end{matrix} \right) \left( \begin{matrix} \lambda & & \\ & \lambda^2 & \\ & & \lambda^3 \end{matrix} \right), \] we have $\T$ fixed points $\{(1,0,0), (0,1,0), (0,0,1)\}$ which I'll denote $\{e_1, e_2, e_3\}$. Then the $0$-dimensional Schubert cell $e_I^r$ of all points in $X$ flowing to $e_1$ in the limit is just $(1,0,0)$. The set of all $\Lambda^\perp$ for $\Lambda \in e^r_I$ is just $\left( \begin{matrix} 0 & 1 & 0 \\ 0 & 0 & 1 \end{matrix} \right)$. This is also the $0$-dimensional cell $e^r_{[n]\setminus I} \subset Gr(n-k,n)$ which flow to the $\T-$fixed point $\{2,3\}$ under the $\T$-action weight $(3,2,1)$. \\

The $1 \C$-dimensional cell $e_I^1$ flowing to $e_2$ is all $\Lambda \in Gr(1,3)$ admitting a basis of the form $(a, 1, 0)$ with $a\in \C$. Then $\{\Lambda^\perp \text{ }|\text{ } \Lambda \in e^1_I\}$ is the set $\left\{ \left( \begin{matrix} 0 & 0 & 1 \\ 1 & -a & 0 \end{matrix}\right) = \left( \begin{matrix} 1 & -a & 0 \\ 0 & 0 & 1 \end{matrix}\right) \right\} $, which is the same as the cell $e^1_{[n]\setminus 2} \subset Gr(2,3)$ flowing to $\{1,3\}$, which we'd write as \[ \left( \begin{matrix} 1 & A & 0 \\ 0 & 0 & 1 \end{matrix}\right) \] for $A\in \C$.   \\

Similarly, the $2 \C$-dimensional cell flowing to $e_3$ is given by $\{ (a, b, 1 ) \text{ }|\text{ } a,b\in \C\}$ has the set of perpendicular subspaces $\{ \left( \begin{matrix} 1 & 0 & -a \\ 0 & 1 & -b \end{matrix} \right) \text{ }|\text{ } a,b \in \C\}$ which is exactly the $2 \C$-dimensional cell of $Gr(2,3)$ flowing to $\{1,2\}$ under the $\T$-action with weight $(3,2,1)$. \\

\subsubsection{$Gr(2,5)$}

$X = Gr(2,5)$ has $\T-$fixed points $\{ \{1,2\},\{1,3\}, \{1,4\}, \{1,5\}, \{2,3\}, \{2,4\}, \{2,5\}, \{3,4\}, \{3,5\}, \{4,5\} \}$.

For $I=\{1,2\}$, we have $e^0_I \subset Gr(k,n)$ given by \[ \left\{ \Lambda = \left( \begin{matrix} 1 & 0 & 0 & 0 & 0 \\ 0 & 1 & 0 & 0 & 0 \end{matrix}\right) \right\} \] which has perpendicular subspaces

     \[  \left\{ \Lambda^\perp = \left( \begin{matrix} 0 &0 &1& 0& 0 \\ 0& 0 &0& 1 &0 \\ 0& 0& 0& 0& 1 \end{matrix} \right) \right\} \] giving the $0$ cell of $Gr(n-k,n)$ flowing to $\{3,4,5\}$.

For $I = \{1,3\}$, we have the $1 \text{ }\C-$dimensional cell \[ \left\{ \Lambda = \left( \begin{matrix} 1 &0 &0& 0& 0 \\ 0 &a& 1 &0& 0 \end{matrix} \right) \right\} \] with perpendicular subspaces \[ \left\{ \left( \begin{matrix} 0 & 1 & -a & 0 & 0 \\ 0 & 0 & 0 & 1 & 0 \\ 0 & 0 & 0 & 0 & 1 \end{matrix}\right) \right\} \] which give the $1 \C$-dimensional Schubert cell of $Gr(n-k,n)$ flowing to $\{2,4,5\}$. \\

Continuing similarly, for $I=\{3,4\}$, we have 

\[ \left\{ \Lambda = \left(\begin{matrix} a & b & 1 & 0 & 0 \\ c & d &0 &1 &0  \end{matrix}\right) \right\} \leftrightarrow \left\{ \Lambda^\perp \left( \begin{matrix} 1 & 0 & -a & -c & 0 \\ 0 & 1 & -b & -d & 0 \\ 0 &0 &0 &0 &1 \end{matrix} \right) \right\}  \]

which is exactly the Schubert cell of $Gr(n-k,n)$ flowing to $\{ 1,2,5\}$. 

Here we can see that each parameter of the LHS appears exactly once on the right-hand side, and a transpose occurs (along with adding in $-$ sign). \\

For $I=\{4,5\}$, we have \[ \left\{ \Lambda = \left( \begin{matrix} a &b& c& 1& 0 \\ d &e &f &0 &1 \end{matrix}\right) \right\} \leftrightarrow \left\{ \Lambda^\perp = \left( \begin{matrix} 1 &0 &0 &-a& -d \\ 0 &1 &0 &-b& -e \\ 0 &0 &1 &-c& -f \end{matrix} \right) \right\}  \] giving a $6 \text{ }\C-$dimensional cell on each side. \\

\subsubsection{$Gr_\C(2,6)$} 

To see more of the phenomena of the way that non-zero entries intermingle with the columns of $I$, we see that in $I = \{2,3\}$, for instance, we have that $e^2_{\{2,3\}}$, the $2 \text{ }\C-$dimensional Schubert cell flowing to $I$, can be written \[ \left\{ \Lambda = \left( \begin{matrix} a & 1 & 0 & 0 & 0 & 0 \\ b & 0 & 1 & 0 & 0 & 0 \end{matrix} \right) \right\}  \]

so that $\{ \Lambda^\perp \text{ }|\text{ } \Lambda \in e^2_{\{2,3\}} \subset Gr(2,6)\}$ can be written as \[ \left( \begin{matrix} 1 & -a & -b & 0 & 0 & 0 \\ 0 &0 &0 &1 &0 &0 \\ 0& 0& 0 &0 &1 &0 \\ 0 &0 &0 &0 &0 &1 \end{matrix}\right) \] which is exactly the $2 \text{ }\C-$dimensional Schubert cell of $Gr(4,6)$ flowing to $\{1,4,5,6\} = [n]\setminus I$ in the limit, with $\T$-action weight $(6,5,4,3,2,1)$. \\

Similarly (to illustrate this phenomenon without listing all instances): for $I=\{2,5\}$, we have $e^4_{\{2,5\}}$ consists of \[ \left\{ \Lambda =  \left( \begin{matrix} a &1 &0 &0 &0 &0\\ d &0 &b &c &1 &0 \end{matrix} \right) \right\} \]

with perpendicular complements \[  \left\{ \Lambda^\perp = \left( \begin{matrix} 1 & -a & 0 &0 &-d &0 \\ 0 &0& 1 &0& -b &0 \\ 0 &0& 0& 1 &-c &0 \\ 0& 0& 0 &0 &0 &1 \end{matrix} \right) \right\}.  \] 

This is exactly the $4 \text{ }\C-$dimensional cell of $Gr(n-k,n)$ flowing to $\{1,3,4,6\}$ is 
\[ e^4_{[n]\setminus I} = \left\{ \left( \begin{matrix} 1 & D & 0 &0 &C &0 \\ 0 &0 &1 &0 &B &0 \\ 0& 0 &0 &1 &A &0 \\ 0& 0& 0& 0& 0& 1 \end{matrix} \right)\;\middle|\; A,B,C,D \in \C  \right\} \subset Gr(n-k,n). \]


\end{proof}

\subsection{Complete flag variety}

Let $\mathcal{F}l_n$= the complete flag variety, consisting of all complete flags in $\C^n$: $\{0 =V_0 \subset V_1 \subset \cdots \subset V_n = \C^n \}$. 

That is, (generalizing $Gr_\C(k,n)$ of $k$-dimensional subspaces of $\C^n$, we consider the following set of choices:

\begin{itemize}
    \item Choose a $0$-dimensional subspace of $\C^n$, $\{0\}$. \\
    \item Choose a $1$-dimensional subspace of $\C^n, \left< v_1 \right> = V_1$ (for $v_1\neq 0$). \\
    \item Choose a $d$-dimensional subspace of $\C^n$ containing $V_1$, i.e., $V_2 = \left< v_1, v_2 \right>$ for $v_2$ linearly independent from $v_1$.\\
    \vdots \\
    \item Choose an $n-1$-dimensional subspace $V_{n-1}= \left< v_1, \dots, v_{n-1} \right>$\\
    \item Choose an $n$-dimensional subspace of $\C^n$, which must be all of $\C^n$. That is $\left< v_1, v_2, \dots, v_n \right> = V_n = \C^n$. 
\end{itemize}

Then $Fl_n$ admits a natural $GL(n,\C) \supset \T = \{\text{ invertible }n\times n \text{ matrices }\}$ action by matrix multiplication on the right of \[ \left[ \begin{matrix} \leftarrow v_1 \rightarrow \\ \leftarrow v_2 \rightarrow \\ \vdots \\ \leftarrow v_n \rightarrow \end{matrix} \right] \left[ \begin{matrix} \lambda & & & \\ & \lambda^2 & & \\ & & \ddots & \\ & & & \lambda^n \end{matrix} \right] \]

so that the only $\T$-fixed points of $\mathcal{F}l_n$ are those complete flags $\{ \{0\}=V_0 \subset V_1 \subset V_2 \subset \cdots \subset V_n = \C^n \}$ such that each $V_i$ admits a basis which is a subset of $\{e_1, \dots, e_n \}$. \\

\[ \implies \chi(\mathcal{F}l_n) = n! \] 

\begin{exc} Show $\dim_\C \mathcal{F}l_n = \binom{n}{2}$ \end{exc} \textbf{Hint}: $\binom{n}{2} = 1 + 2 + \cdots + n-1$ 

\begin{exc} (Harder): Describe the Schubert cell decomposition of $Fl_n$. \end{exc} 

\subsection{Localization argument for 27 lines on a nonsingular cubic surface $X$}

\label{sec: loc27lines}
Here we follow the conventions and background from \cite{MSTextbook} and \cite{coxkatxMSAG}. Recall from Section~\ref{chernclassCubicSurface} that a line $\Lambda \subset \PP^3$ is contained in $X \iff |X\cap \Lambda| \geq 4 \text{ points}$ by Bezout's theorem. Recall that the zero locus $Z(\sigma_f)$, denoted $F_1(X)$, has codimension $4 = \text{rank }(Sym^3(\mathcal{S}^*))$ in $\text{Gr}(2,4)$. To evaluate 
\[ \int_{\text{Gr}(2,\A^4)} c_4 \text{Sym}^3(\mathcal{S}^*) \]

we can use the action of $\mathbb{T}=(\C^*)^4 \curvearrowright V = \C^4$ via \[ (\lambda_1, \dots, \lambda_4) \cdot (x_1, \dots, x_4) = (\lambda_1^{-1} x_1, \dots, \lambda_4^{-1}x_4), \] which induces an action of $(\C^*)^4 \curvearrowright Gr(2,4)$ via 

\begin{align*}
    \left( \begin{matrix} a_1 & a_2 & a_3 & a_4 \\ b_1 & b_2 & b_3 & b_4 \end{matrix}\right) \cdot \left( \begin{matrix} \lambda_1^{-1} & \cdot & \cdot & \cdot \\ \cdot & \lambda_2^{-1} & \cdot & \cdot \\ \cdot & \cdot & \lambda_3^{-1} & \cdot \\ \cdot & \cdot & \cdot & \lambda_4^{-1} \end{matrix}\right).
\end{align*}

Without loss of generality, the $2$-plane $\Lambda \subset \C^4$ can be written with $\text{Id}_{2\times 2}$ on pivot columns with $i_1 < i_2$ as, for instance:

\FloatBarrier
\begin{center}
\begin{figure}[h]
\includegraphics[trim = {0cm, 14cm, .8cm, .2cm}, clip, width=.4\textwidth]{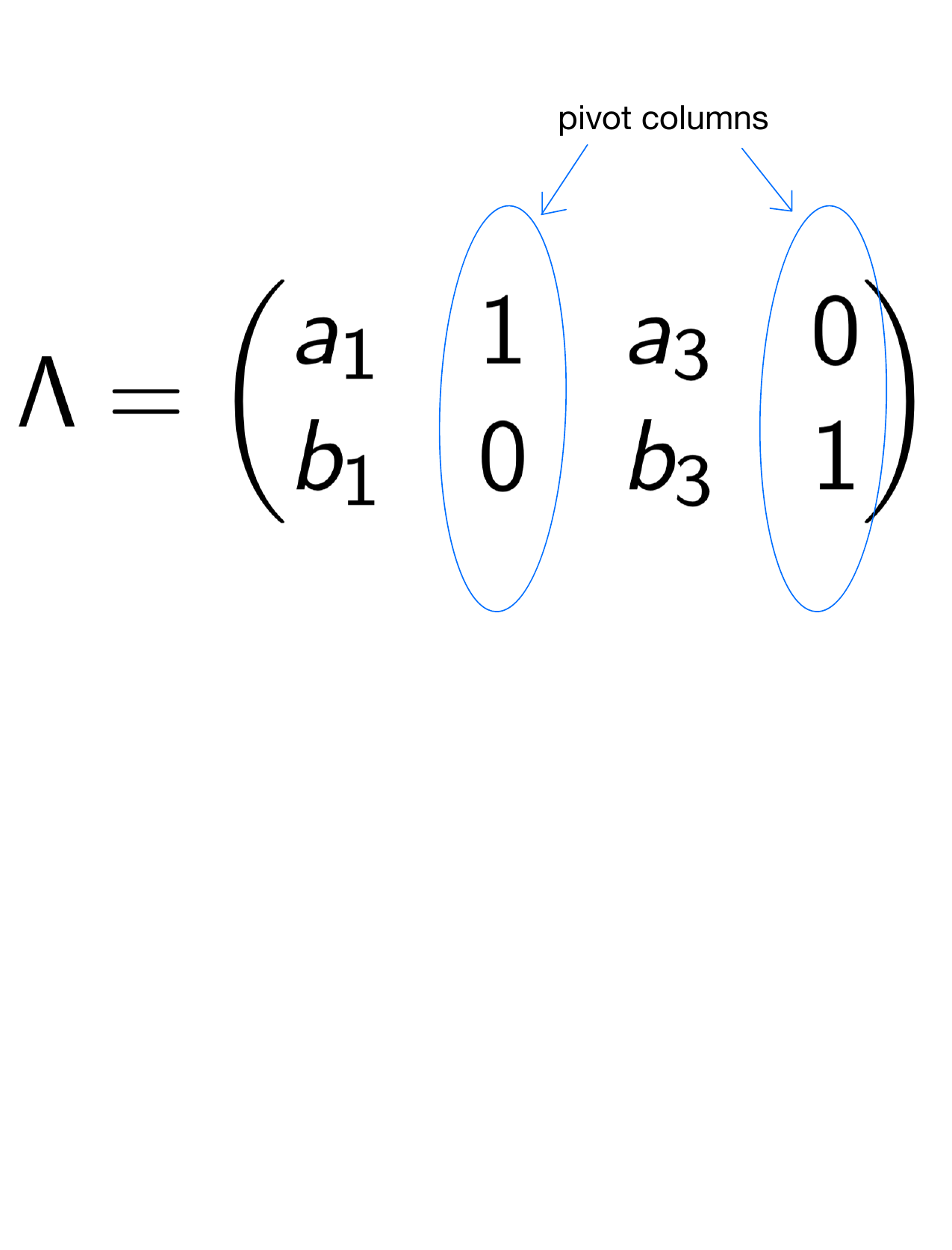}
\caption{Here $i_1=2 < i_2=4$}
\end{figure} 
\end{center} 
\FloatBarrier

To be $\mathbb{T}-$fixed, we need $a_j= b_j = 0$ for $j\not\in I$. This gives the $\binom{4}{2}=6 \text{ } \T-$fixed points, which we'll denote as \[ \{L_I\} = \{ (1 2),(1 3),(1 4), (2 3), (2 4), (3 4) \}. \]

At a given $2-$plane $\Lambda \in \text{Gr}(2,4)$, we have 

\[ N_{\faktor{L_I}{Gr(2,4)}} = T_{L_I}Gr(2,4) \] since $L_I$ is a point. To get an equivariant cohomology class of $N_{L_I}$, we decompose $T_{L_I}Gr(2,4)$ into weight-spaces as follows. Note that the tangent bundle $T Gr(2,4) \cong Hom (\mathcal{S}, \mathcal{S}^\perp) = Hom(\mathcal{S}, \faktor{\C^4}{\mathcal{S}} ). $

Viewing $L_I$ as $\{x_j = 0\text{ }|\text{ }j\not\in I\}$, we have characters of $\mathcal{S}\bigg|_{L_I}$ as \[-\rho_i \text{ for } i\in I. \] 

 So \[ \text{Hom}_{L_I}(\mathcal{S}, \mathcal{S}^\perp) \cong \mathcal{S}^* \otimes \mathcal{S}^\perp\bigg|_{L_I}  \] has characters  \[ \{\rho_i - \rho_j \text{ } |\text{ } i \in I, j\not \in I \}. \]

Therefore $T_{L_I}Gr(2,4)$ has equivariant Euler class $\prod_{i\in I, j\not\in I} \left(\lambda_i - \lambda_j\right) $ and at $L_I$: \[ \text{Euler}_\T \left(N_{\faktor{L_I}{\text{Gr}(2,4)}} \right) = \text{Euler}_\T \left( T_{L_I}(Gr(2,4)\right). \]

Next, noting that $\mathcal{S}^*\bigg|_{L_I}$ has characters $\{\rho_i\text{ }|\text{ }i\in I\}$, we have that the equivariant Euler class of $i_I^*(\text{Sym}^3(\mathcal{S}^*))$ is \[ \prod_{a=0}^3 \left( a\lambda_{i_1} + (3-a)\lambda_{i_2}  \right) \] for $I = \{i_1, i_2\}$.  This gives 

\[ \int_{\text{Gr}(2,4)}c_4 \text{Sym}^3(\mathcal{S}^*) = \sum_{|I|=2} \frac{ \prod_{a=0}^3 (a\lambda_{i_1} + (3-a)\lambda_{i_2} ) }{\prod_{i\in I, j\not\in I} (\lambda_i - \lambda_j) } = \cdots \]

\begin{align*}
\cdots = &\frac{(3\lambda_1)(2\lambda_1 + \lambda_2)(\lambda_1+2\lambda_2)(3\lambda_2)}{(\lambda_1-\lambda_3)(\lambda_1-\lambda_4)(\lambda_2-\lambda_3)(\lambda_2-\lambda_4)} + \cdots \\
&\cdots + \frac{(3\lambda_1)(2\lambda_1+\lambda_3)(\lambda_1+2\lambda_3)(3\lambda_3)}{(\lambda_1-\lambda_2)(\lambda_1-\lambda_4)(\color{red}{\lambda_3-\lambda_2}\color{black})(\lambda_3-\lambda_4)} + \cdots \\
&\cdots + \frac{(3\lambda_1)(2\lambda_1 + \lambda_4)(\lambda_1+2\lambda_4)(3\lambda_4)}{(\lambda_1-\lambda_2)(\lambda_1-\lambda_3)(\color{red}{\lambda_4-\lambda_2}\color{black})(\color{red}{\lambda_4-\lambda_3} \color{black})} \cdots \\
&\cdots + \frac{3\lambda_2(2\lambda_2+\lambda_3)(\lambda_2+2\lambda_3)(3\lambda_3)}{(\color{red}{\lambda_2-\lambda_1}\color{black})(\lambda_2-\lambda_4)(\color{red}{\lambda_3-\lambda_1}\color{black})(\lambda_3-\lambda_4)} +\cdots \\
&\cdots + \frac{3\lambda_2(2\lambda_2+\lambda_4)(\lambda_2+2\lambda_4)(3\lambda_4)}{(\color{red}{\lambda_2-\lambda_1}\color{black})(\lambda_2-\lambda_3)(\color{red}{\lambda_4-\lambda_1}\color{black})(\color{red}{\lambda_4-\lambda_3\color{black})}} +\cdots \\
&\cdots + \frac{3\lambda_3(2\lambda_3+\lambda_4)(\lambda_3+2\lambda_4)(3\lambda_4)}{(\color{red}{\lambda_3-\lambda_1}\color{black})(\color{red}{\lambda_3-\lambda_2}\color{black})(\color{red}{\lambda_4-\lambda_1}\color{black})(\color{red}{\lambda_4-\lambda_2}\color{black})} 
\end{align*}

\begin{align*}
\cdots &= \frac{ \left(  \makecell{ \left[ 18\lambda_1^3\lambda_2 + 45 \lambda_1^2\lambda_2^2 + 18\lambda_1\lambda_2^3 \right]\cdot \left[ \left(\lambda_1-\lambda_2)(\lambda_3-\lambda_4 \right) \right] + \cdots  \\ \newline
\cdots +\color{red}{(-1)^1}\color{black} \left[ 18 \lambda_1^3\lambda_3 + 45 \lambda_1^2 \lambda_3^2 + 18 \lambda_1 \lambda_3^3 \right] \cdot \left[ (\lambda_1-\lambda_3)(\lambda_2-\lambda_4)\right] + \cdots \newline\\ 
\cdots + \color{red}{(-1)^2}\color{black} \left[ 18\lambda_1^3\lambda_4 + 45\lambda_1^2 \lambda_4^2 + 18 \lambda_1 \lambda_4^3 \right] \cdot \left[ (\lambda_1-\lambda_4)(\lambda_2-\lambda_3) \right] + \cdots \\ \newline \cdots +  \color{red}{(-1)^2} \color{black} \left[ 18 \lambda_2^3 \lambda_3 + 45 \lambda_2^2 \lambda_3^2 + 18 \lambda_2 \lambda_3^3 \right] \cdot \left[ (\lambda_1-\lambda_4)(\lambda_2-\lambda_3) \right] + \cdots \\ \newline \cdots + \color{red}(-1)^3 \color{black}\left[ 18 \lambda_2^3 \lambda_4 + 45 \lambda_2^2 \lambda_4^2 + 18\lambda_2 \lambda_4^3 \right] \cdot \left[ (\lambda_1-\lambda_3)(\lambda_2-\lambda_4) \right] + \cdots \\ \newline \cdots + \color{red}(-1)^4 \color{black} \left[ 18 \lambda_3^3 \lambda_4 + 45 \lambda_3^2 \lambda_4^2 + 18 \lambda_3 \lambda_4^3 \right] \cdot \left[ (\lambda_1 - \lambda_2 )(\lambda_3 - \lambda_4 ) \right]} \right)}   {\left[(\lambda_1-\lambda_3)(\lambda_1-\lambda_4)(\lambda_2-\lambda_3)(\lambda_2-\lambda_4)\right]\cdot \left[ (\lambda_1-\lambda_2)(\lambda_3-\lambda_4)\right]}  \\[.4cm]
    & =27. 
\end{align*} 

\FloatBarrier
\begin{figure}[h]
\includegraphics[width=.8\textwidth]{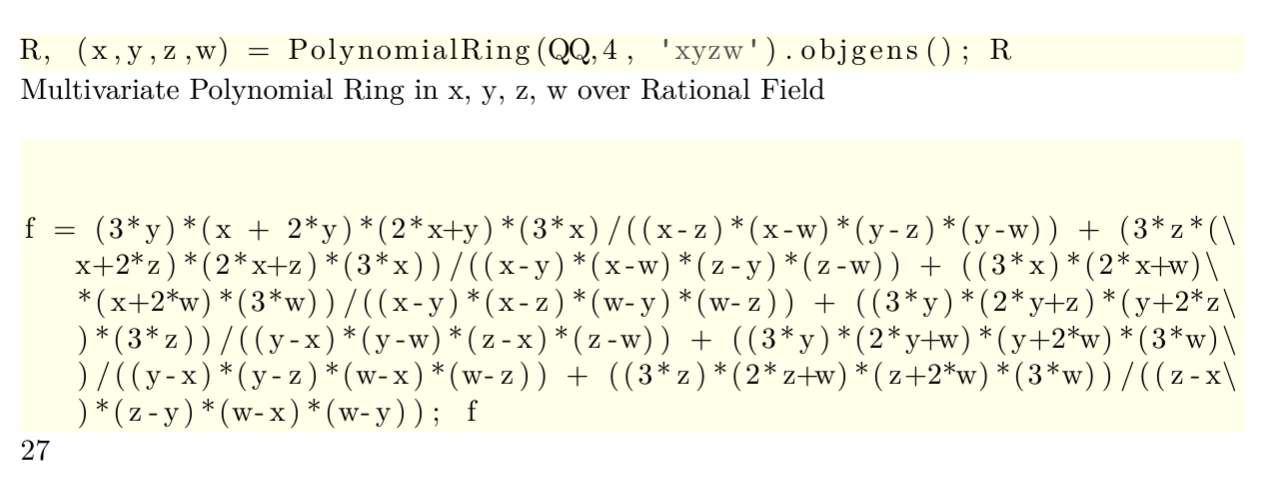}
\caption{Sage computation verifying result for 27 lines on nonsingular cubic surface using localization in equivariant cohomology}
\end{figure}
\FloatBarrier

\subsubsection{Localization argument for 2875 lines on a generic nonsingular quintic threefold $Q \subset \PP^4$}

Similarly, for 2875 lines on a generic nonsingular quintic threefold $Q\subset \PP^4$, we consider $\mathcal{S}$ as a tautological rank $2$ vector bundle on $\text{Gr}(2,5)=\text{Gr}(2,\A^5)$ and $\sigma_f \in \Gamma \text{Sym}^5(\mathcal{S}^*)$. We also note that $\Lambda \subset \PP^4$ satisfies $\Lambda \subset Q$ iff $|\Lambda \cap Q| \geq 6$, which is also the rank of $\text{Sym}^5(\mathcal{S}^*)$. Now the number of lines in $Q$ is

    \begin{align*} \int_{\text{Gr}(2,5)} c_6 \text{Sym}^5(\mathcal{S}^*) &= \sum_{\makecell{|I|=2, \newline I \subset \{1,2,3,4\}}} \frac{ \prod_{a=0}^5 \left( a\lambda_{i_1} + (5-a)\lambda_{i_2}\right)}{ \prod_{\makecell{i\in I, \newline j\not\in I}} \left(\lambda_i - \lambda_j\right) }\\
    &= 2875. 
\end{align*}

\section{Main theorems of this paper}

\subsection{Description of Gromov-Witten invariants} For the purpose of exposition, we include the definition of Gromov-Witten invariants as given in \cite{coxkatxMSAG}. Let $X$ be a projective algebraic variety. Fix $\beta \in H_2(X,\Z)$ and cycles $Z_1,\dots, Z_n \subset X$. A first approximation to a Gromov-Witten invariant is given by the following remark.\\

\begin{remark} Find the structure of the set of curves $C \subset X$ of genus $g$, homology class $\beta$, with $C \cap Z_i \neq \emptyset$ for all $i$, assuming that the collection of $Z_i$ are in general position. \end{remark}

A first refinement of this is the following. 

\begin{remark} Replace curves $C \subset X$ with $n$-pointed curves (possibly reducible) $(C,p_1, \dots, p_n)$ and holomorphic maps $f:C \rightarrow X$. So consider \[ f: C\rightarrow X \text{ such that } f_*[C]=\beta \text{ and } f(p_i) \in Z_i \text{ for } i=1,\dots, n. \] 

(Up to isomorphism of maps, so consider their \textbf{moduli}). \end{remark}

We again must refine this remark, as follows, due to Kontsevich, which uses the notion of \textbf{stable map}. 

\begin{remark} An $n$-pointed \textbf{stable map} consists of a connected marked curve $(C,p_1, \dots, p_n)$ and morphism $f:C \rightarrow X$ such that: 
\begin{itemize}
    \item[(i)] The only singularities of $C$ are ordinary double points \\
    \item[(ii)] $p_1, \dots, p_n$ are distinct ordered smooth points of $C$ \\
    \item[(iii)] If $C_i$ is a component of $C$ such that $C_i \cong \PP^1$ and $f$ is constant on $C_i$, then $C_i$ has at least $3$ special (marked or nodal) points. 
    \item[(iv)] If $C$ has arithmetic genus $1$ and $n=0$, then $f$ is not constant. 
\end{itemize}

Given the first 2 conditions, (iii)-(iv) are equivalent to $(f,C,p_1,\dots, p_n)$ having only finitely many automorphisms. If $n+2g<3$, then the presence of infinitely many automorphisms implies that $\overline{M_{g,n}}$ does not exist. 
\end{remark} 

Here, a stable map is different from an $n$-pointed stable curve. Note that $\overline{M}_{g,n}$ in the sense of Deligne-Mumford exists as an orbifold of dimension $3g-3+n$ when $n+2g\geq 3$. Furthermore, $\overline{M_{g,n}}(X,\beta)$, the coarse moduli space of the stack $\overline{\mathcal{M}_{g,n}}(X,\beta))$ of genus $g$ stable maps with $n$ marked points to $X$ with $f_*[C]=\beta$ has expected dimension $(1-g)(\dim X - 3)-\int_\beta \omega_X + n$. To define a Gromov-Witten invariant, we use the natural maps

\[\begin{array}{ccccc}
 & & \overline{M}_{g,n}(X,\beta) & & \\
 & \pi_1 \swarrow  & &  \searrow \pi_2 &  \\
 X^n & & & & \overline{M}_{g,n} 
\end{array}\]

where $\pi_1( (f,C,p_1, \dots, p_n)) = (f(p_1),f(p_2),\dots, f(p_n))\in X^n$ and $\pi_2( (f,C,p_1, \dots, p_n)  )$ successively contracts non-stable components of $C$ to give a stable curve $\tilde{C}$. So $\pi_2(f,C,p_1, \dots, p_n)$ is the isomorphism class of $\tilde{C}$. We can now define Gromov-Witten classes and Gromov-Witten invariants.

\begin{definition} For $n+2g\geq 3$, we define the Gromov-Witten class \[ I_{g,n,\beta}(\alpha_1, \dots, \alpha_n) = \pi_{2!}(\pi_1^*(\alpha_1 \otimes \cdots \otimes \alpha_n)) \] where $\alpha_i$ are cohomology classes dual to the cycles $Z_i$. The right-hand side is a cohomology class of $\overline{M}_{g,n}$ of degree $2(g-1)\dim X + 2\int_\beta \omega_X + \sum_{i=1}^n \deg \alpha_i$. \end{definition}

A Gromov-Witten invariant is now given as the following integral.

\begin{definition}
    \begin{align*} \left< I_{g,n,\beta}\right>(\alpha_1,\dots, \alpha_n) &= \int_{\overline{M}_{g,n}} I_{g,n,\beta}(\alpha_1,\dots, \alpha_n) \\
    &= \int_{\overline{M}_{g,n}(X,\beta)} \pi_1^*(\alpha_1\otimes \cdots \otimes \alpha_n). \end{align*} 
\end{definition}

\begin{example} For $X$ a nonsingular cubic surface in $\PP^3$, we have that 

\[ \left< I_{0,0,\ell}\right> = 27 \] for $\ell$ the class of a line in $H_2(X,\Z)$.  \end{example}

\subsection{Genus-0 Gromov-Witten invariants for $\PP^1$}
\label{subsec: GWInvtsP1}

From the Euler sequence for $\mathcal{T}_{\PP^r}$

\[ 0 \rightarrow \mathcal{O} \rightarrow \mathcal{O}(1)^{\oplus r+1} \rightarrow \mathcal{T}_{\mathbb{P}^r} \rightarrow 0\] we get that

\begin{align*}c(\mathcal{T}_{\PP^r}) &= (1+h)^{r+1} \\ &= \sum_{k=0}^{r+1} \binom{r+1}{k} h^k\\
&= 1 + (r+1)h + \cdots + (r+1)h^r \end{align*} 

gives $c_1(T\PP^r) = (r+1)h$ and for $\beta = d\ell \in H_2(\PP^r,\Z)$, we have that $$-\int_\beta \omega_{\PP^r} = -\int_{d\ell} \omega = \int_{d\ell} c_1 T(\mathbb{P}^r) = (r+1)d.$$ Now Equation (7.33) from \cite{coxkatxMSAG} reads 

\begin{align*}
(7.33) \hspace{2cm} \sum_{i=1}^n \deg\alpha_i = 2(1-g)\dim X - 2\int_\beta \omega_X + 2(3g-3+n)\end{align*}

For $\PP^r$, this gives

\[ \sum_{i=1}^n \deg \alpha_i = 2(1-g)r + 2(r+1)d + 2(3g-3+n).\]

For $X=\PP^r, g=0$ we have

\begin{align*}
    \sum_{i=1}^n \deg\alpha_i &= 2r +2(r+1)d -6 + 2n\\
    &= 2r + 2rd + 2d -6 + 2n.
\end{align*}

For $g=0,d=0$, we have that the only non-zero invariant (up to permuting the inputs) is 

\begin{align*}
    \left<I_{0,3,0}\right>([pt], [\PP^1], [\PP^1]) &=\int_{\PP^1} [pt] \cup [\PP^1] \cup [\PP^1]\\
    &= \int_{\PP^1}[pt] = 1
\end{align*} by Equation (7.35). For $d>0$, the Fundamental Class Axiom implies that all inputs must be $([pt],[pt],\dots, [pt])$. This leaves

\begin{align*}
    \left< I_{0,n,d}\right>([pt],[pt],\dots,[pt]) &= \left( \int_\beta [pt] \right) \left< I_{0,n-1,d}\right> ([pt],[pt],\dots,[pt])\\
    &= d \left< I_{0,n-1,d}\right>([pt],[pt],\dots,[pt])
\end{align*}

by the Divisor Axiom, for $n>0$. To find $d$, we note from (7.33) that

\begin{align*}
    \sum_{i=1}^n \deg\alpha_i = 2n &= 2 + 4d - 6 + 2n\\
    &= 4d + 2n - 4
\end{align*} gives $d=1$. Now

\begin{align*}
 \left<I_{0,n,1}\right>([pt],[pt],\dots,[pt]) = \cdots &= (1)^{n-1}\cdot \left<I_{0,1,1}\right>([pt]) \text{ by the Divisor Axiom and Induction}\\
 &= (1)^n \left< I_{0,0,1} \right> = 1
 \end{align*} 

 so that $\left<I_{0,n,1}\right>([pt]^{\bullet n}) = 1$ for $n\geq 0$. 

\subsection{Quantum cohomology of $\PP^1$}

\subsubsection{Small quantum cohomology on $\PP^1$}

We fix the ordered basis $\{T_0, T_1\} = \{[\PP^1], [pt]\}$ for $H^*(\PP^1,\Q)$. The small quantum product $a*b$ for $a,b\in H^*(\PP^1,\Q)$ is defined by

\begin{align*}  a*b &= \sum_{i,j} \sum_{\beta\in H_2(\PP^1, \Z)} \left< I_{0,3,\beta} \right>(a,b,T_i)g^{ij}q^\beta T_j\\
&= \sum_i \sum_{d\geq 0} \left< I_{0,3,d} \right> (a,b,T_i) q^\beta T^i\\
&= \sum_i \sum_{d=0,1} \left< I_{0,3,d}\right> (a,b,T_i) q^{d\cdot \ell} T^i
\end{align*} 

which we extend to $H^*(\PP^1, \C)$ by linearity. The small quantum product on basis elements is given by 

\begin{align*}
    T_0 * T_0 &= \left< I_{0,3,0}\right>(T_0, T_0, T_1) q^0 T^1 = T_0,\\
    T_0*T_1 &= \left< I_{0,3,0}\right> (T_0, T_1, T_0) q^0 T^0 = T^0 = T_1\\
    T_1*T_1 &= q^\ell T^1 = q^\ell T_0
\end{align*}

\subsubsection{$QH^*(\PP^1)$: Constructing the big quantum cohomology of $\PP^1$}

Here we again follow \cite{coxkatxMSAG}. The cohomology basis for $\PP^1$ is $T_0 = [\PP^1]$ and $T_1 = [pt]$. To make $\Phi$ homogeneous, we set $\deg t_0 = -2$ and $\deg t_1 = 0$ \cite{coxkatxMSAG} (8.28). For $\beta=d\ell$ and $q^\beta = e^{2\pi i \int_\beta \omega_{\PP^1}}$, we set

\[ \deg q^\beta = 2 \int_\beta \omega_{\PP^1} = 2 \int_{d\ell} \omega_{\PP^1} =-2(2)d = -4d \]

Let $\gamma = t_0T_0 + t_1T_1$. 
Then the Gromov-Witten potential is the formal sum

\begin{align}\label{eqn: GWPotP1}
\Phi(\gamma) &= \sum_{n=0}^\infty \sum_{\beta\in H_2(\PP^1,\Z)}\frac{1}{n!}\left<I_{0,n,\beta}\right>(\gamma^n)q^\beta\\
&= \sum_{n=0}^\infty \sum_{\beta = d\ell, d\geq 0} \frac{1}{n!} \left< I_{0,n,d}\right>((t_0[\PP^1] + t_1[pt])^{\bullet n}) q^\beta \\
&= \frac{1}{3!} \cdot 3 \left<I_{0,3,0}\right> (t_1[pt],t_0[\PP^1],t_0[\PP^1])q^0 + \sum_{n\geq 0} \frac{1}{n!} \left< I_{0,n,1}\right>(pt^{\bullet n})t_1^n \cdot q^\ell \end{align}

where the first summand comes from $d=0$, which forces $n=3$ and the second summand comes from $d>0$, which forces $d=1$ and $n>0$. Continuing, this gives

\[
    \cdots = \frac{1}{2} t_0^2t_1 + \sum_{n\geq 0} \frac{1}{n!} t_1^n \cdot q^\ell = \frac{1}{2} t_0^2 t_1 + e^{t_1} q^\ell
\]

Here, $g_{jk}$ are defined by $\int_{\PP^1} T_j \cup T_k$ to give \[ g_{jk} = \left( \begin{matrix} 0 & 1 \\ 1 & 0 \end{matrix} \right) \] 

so that $g^{jk}$ is given by the inverse matrix \[ \left( \begin{matrix} 0 & 1 \\ 1 & 0 \end{matrix}\right) \]

and we define $T^i = \sum_j g^{ij}T_j$ so that $T^0 = \sum_j g^{0j} T_j = T_1$ and similarly, $T^1 = T_0$. Since $\PP^1$ is a smooth projective variety and $\Phi$ as above is the Gromov-Witten potential, the big quantum product on the cohomology $H^*(\PP^1,\C)$ is given by 

\[ T_i * T_j = \sum_k \frac{ \partial^3 \Phi}{\partial t_i \partial t_j \partial t_k} T^k\]

Here $\{T_0 = [\PP^1], T_1 = [pt]\}$ gives a basis for $H^*(X;\Q)$; we extend the above formula using linearity to give the big quantum product on $H^*(\PP^1,\C)$. The product structure is now given by

\begin{align*}
    T_0*T_0 &= T^1=T_0,\\
    T_0*T_1 &= T^0= T_1=T_1*T_0,\\
    T_1 * T_1 &= e^{t_1}q^\ell T^1 = e^{t_1}q^\ell T_0. 
\end{align*}

note that the product structure is commutative since all cohomology classes have even degree.




\subsection{Genus-0 Gromov-Witten invariants of $\PP^2$}

\label{sec: GWinvtsP2}

By the Point Mapping Axiom, we know that the only possibly non-zero GW invariants for $d=0$ are $\left<I_{0,3,0}\right>([pt],[\PP^2],[\PP^2])$ or $\left<I_{0,3,0}\right>([\ell],[\ell],[\PP^2])$. Both are

\begin{align*}
    \int_{\PP^2}[pt]\cup[\PP^2]\cup[\PP^2] &= \int_{\PP^2}[\ell]\cup[\ell]\cup[\PP^2] \\
    &= 1.
\end{align*}

For $d>0$, the Fundamental Class Axiom implies that $\left<I_{g,n,\beta}\right>(\alpha_1,\dots, \alpha_{n-1},[\PP^2])$ is $0$ for $d>0$ and $n\geq 1$. So we need only consider $I_{g,n,\beta}(\alpha_1, \dots, \alpha_n)$ of the form

\[ \left< I_{g,n,\beta}\right>([pt]^{\bullet a}, [\ell]^{\bullet b}) \] when $\beta=d\ell$ with $d>0$. The Degree Axiom again gives

\begin{align*}
\sum_{i=1}^n \deg \alpha_i &= 2(1-g)\dim X - 2\int_\beta \omega_X + 2(3g-3+n)\\
&= -2 + 6d+2n
\end{align*} 

For $d=1$, this forces $n\geq 2$. Now $n=2$ gives $\sum_{i=1}^n \deg \alpha_i = 8$ leaves only $\left<I_{0,2,1}\right>([pt],[pt])$. For $n=3$, $\sum_{i=1}^n \deg\alpha_i=10$ leaves only

\begin{align*}
    \left<I_{0,3,1}\right>([pt],[pt],[\ell]) &= \left( \int_\ell [\ell]\right) \cdot \left< I_{0,2,1}\right>\left([pt],[pt]\right)\\
    &= \left<I_{0,2,1}\right>([pt],[pt])
\end{align*} 

Similarly, $n=4$ leaves only 

\begin{align*}
    \left<I_{0,4,1}\right>([pt],[pt],[\ell],[\ell]) &= \int_\ell[\ell] \cdot \left<I_{0,3,1}\right>([pt],[pt],[\ell])\\
    &= \left( \int_\ell [\ell] \right)^2 \left< I_{0,2,1}\right> ([pt],[pt])\\
    &= \left< I_{0,2,1} \right>([pt],[pt]).
\end{align*}

Since we are inserting only classes of the form $([pt]^{\bullet a},[\ell]^{\bullet b})$, we know that $\sum_{i=1}^n\deg \alpha_i = 4a + 2b$. This gives

\begin{align*}
    \sum_{i=1}^n \deg\alpha_i &= 4a + 2b\\
    &= -2+6d+2n\\
    &= -2 + 6d + 2(a+b) &\text{ gives }\\
    2a &= -2 + 6d\\
    a&= -1+ 3d
\end{align*}


When $d=1$ as above, this leaves only 

\begin{align*}
     \left< I_{0,n+2,1}\right>([pt],[pt], [\ell]^{\bullet n}) &= \left( \int_\ell [\ell] \right)^n \cdot \left< I_{0,2,1} \right> ([pt],[pt]) \\
     &= \left< I_{0,2,1}\right>([pt],[pt]).\end{align*}

When $d=2$ as above, this leaves only $a=5$ and

\begin{align*}
    \left<I_{0,n+5,2}\right>([pt]^{\bullet 5},[\ell]^{\bullet n}) &= \left( \int_{2\ell} [\ell]\right)^n \cdot \left<I_{0,5,2}\right>([pt]^{\bullet 5} )
\end{align*} is determined by the number of conics through 5 points in general position \cite{coxkatxMSAG}. Similarly, $d>0$ reduces to \[ N_d := \left<I_{0,3d-1,d}\right>([pt]^{\bullet 3d-1} ). \]


\subsection{Computing $QH^*(\PP^2)$}
\label{sec: 4.5}
Fix the basis $T_0 = [\PP^2], T_1=[\ell], T_2=[pt]$ for $H^*(\PP^2, \Q)$ and introduce formal parameters $t_0, t_1, t_2$ with degrees $\deg t_0 = -2, \deg t_1 = 0, \deg t_2 = 2$. Similar to the case for $\PP^1$, let $\gamma = t_0T_0 + t_1T_1 + t_2T_2$ and write

\[ \Phi(\gamma) = \sum_{n=0}^\infty \sum_{\beta\in H_2(\PP^2,\Z)} \frac{1}{n!} \left<I_{0,n,\beta}\right>(\gamma^n) q^\beta \] as a formal power series in $t_0, t_1, t_2$. Now this equals

\begin{align*}
\cdots &= \frac{1}{2}( t_0^2 t_2 + t_0 t_1^2 ) + \sum_{d=1}^\infty \sum_{n\geq 0} \frac{(dt_1)^n}{n!} N_d \cdot \frac{t_2^{3d-1}}{(3d-1)!} \cdot q^{d\ell}\\
&= \frac{1}{2}( t_0^2 t_2 + t_0 t_1^2 ) + \sum_{d=1}^\infty e^{dt_1} N_d \cdot \frac{ t_2^{3d-1}}{(3d-1)!}q^{d\ell} \end{align*} 

where the two summands in the first line come from $d=0$ and $d>0$ from considerations of the previous Section~\ref{sec: GWinvtsP2}. In order to solve for $N_d$, here we make clever use of the Witten-Dijkgraaf-Verlinde-Verlinde equation \[ \sum_{a,b} \frac{ \partial^3 \Phi}{\partial t_i \partial t_j \partial t_a} g^{ab} \frac{ \partial^3 \Phi }{\partial t_b \partial t_k \partial t_\ell} = (-1)^{\deg t_i (\deg t_j + \deg t_k)} \sum_{a,b} \frac{ \partial^3 \Phi}{\partial t_j \partial t_k \partial t_a} g^{ab} \frac{ \partial^3 \Phi}{\partial t_b \partial t_i \partial t_\ell} \] for all $i,j,k,\ell$. Again, for $\PP^2$, all cohomology classes have even degree, so we can ignore the factor of $(-1)$ on the right-hand side. Following \cite{coxkatxMSAG}, we set $(i,j,k,\ell) = (1,1,2,2)$ which gives

\begin{align*}
    \Phi_{011}\Phi_{222} + \Phi_{111}\Phi_{221} + \Phi_{211}\Phi_{220} &= \Phi_{021}\Phi_{212} + \Phi_{121}\Phi_{211} + \Phi_{221}\Phi_{210}
\end{align*}

Now $\Phi_{0jk} = g_{jk}$ and the partial derivative operators pairwise commute $(i.e., \Phi_{ij}=\Phi_{ji}$ for all $i,j$) since all cohomology classes have even degree, so that this simplifies to 

\[ \Phi_{222} + \Phi_{111}\Phi_{122} = \Phi_{211}^2 \] 

When we write

\begin{align*}
    \Phi_{222} &= \sum_{\ell=1}^\infty N_{\ell+1} e^{(\ell + 1)t_1} \cdot \frac{ t_2^{3\ell - 1}}{(3\ell-1)!} q^{\ell+1}\\
    &= \sum_{\ell=1}^\infty N_{\ell+1} \sum_{k=0}^\infty \frac{ t_1^{(\ell+1)k}}{k!} \cdot \frac{ t_2^{3\ell -1 }}{(3\ell-1)!} q^{\ell+1}, 
\end{align*}

the only terms without $t_1$ are from $k=0$, as in

\[ \sum_{\ell=1}^\infty N_{\ell+1} \frac{ t_2^{3\ell -1 }}{(3\ell-1)!} \cdot q^{\ell + 1 } \] 

Similarly, the only terms in $\Phi_{111}$ without $t_1$ are

\[ \sum_{d>0} d^3 N_d \frac{ t_2^{3d-1}}{(3d-1)!} q^d \] 

and in $\Phi_{122}$ without $t_1$ are \[ \sum_{d>0}d N_d \frac{ t_2^{3d-3}}{(3d-3)!} q^d \] 

Comparing terms on the left- and right-hand sides without $t_1$ gives

\begin{align*}
    \left[ \sum_{\ell=1}^\infty N_{\ell+1} \frac{ t_2^{3\ell -1 }}{(3\ell-1)!} \cdot q^{\ell + 1 } \right] + \left[\sum_{d_1>0} d_1^3 N_{d_1} \frac{ t_2^{3d_1-1}}{(3d_1-1)!} q^{d_1} \right] \cdot \left[ \sum_{d_2>0}d_2 N_{d_2} \frac{ t_2^{3d_2-3}}{(3d_2-3)!} q^{d_2}    \right] =  \cdots \\
  \left[   \sum_{ \makecell{ d_1 + d_2 = d,\\ d_1,d_2>0}} d_1^2 d_2^2 N_{d_1} N_{d_2} \frac{ t_2^{3d-4} }{(3d_1-2)! (3d_2-2)!} q^d  \right] 
\end{align*}

gives

\begin{align*}
    \left[ \sum_{d=2}^\infty N_d \frac{ t_2^{3d-4}}{(3d-4)!} q^d \right]  +  \left[ \sum_{ \makecell{ d_1 + d_2 = d, \\ d_1,d_2>0}} d_1^3 d_2 N_{d_1} N_{d_2} \frac{t_2^{3d - 4}}{(3d_1-1)!(3d_2-3)!} q^d \right] = \cdots  \\
     \left[ \sum_{ \makecell{ d_1 + d_2 = d,\\ d_1,d_2>0}} d_1^2 d_2^2 N_{d_1} N_{d_2} \frac{ t_2^{3d-4} }{(3d_1-2)! (3d_2-2)!} q^d \right] 
\end{align*}

so that combining coefficients on $t_2^{3d-4}q^d$ gives 

\begin{align*}
    \frac{N_d}{(3d-4)!} &= \sum_{ \makecell{ d_1+d_2=d, \\ d_1,d_2>0}} \frac{d_1^2d_2^2 N_{d_1}N_{d_2}}{(3d_1-2)!(3d_2-2)!} - \frac{d_1^3 d_2 N_{d_1} N_{d_2}}{(3d_1-1)!(3d_2-3)!} 
\end{align*}

and

\begin{align*}
N_d &= \sum_{ \makecell{ d_1 + d_2 = d, \\ d_1, d_2>0} } d_1^2 d_2^2 N_{d_1} N_{d_2} \binom{3d-4}{3d_1-2} - \binom{3d-4}{3d_1-1} d_1^3 d_2 N_{d_1} N_{d_2} 
\end{align*}

recovers the recursive formula due to Kontsevich-Manin \cite{KontsevichManin_1994} for $N_d$ on $\PP^2$. To use this formula, we start with the input data that there is a unique line through 2 distinct points of $\PP^2: N_1=1$. From here, we see that 

\begin{align*}
    N_2 &= 1\cdot \binom{2}{1} - \binom{2}{2}\cdot 1 \\
    &= 1 \end{align*}

says that there is 1 unique conic through 5 points in general position,

    \begin{align*}
    N_3 &= 4 \cdot N_1 N_2 \binom{5}{1} - \binom{5}{2}\cdot 2 + 4\cdot \binom{5}{4} - \binom{5}{5} \cdot 8\\
    &= 12
\end{align*}

says that there are 12 rational cubic curves through 8 points in general position, and so on.


\section{Genus $0$ Gromov-Witten potential of a genus $g>0$ Riemann surface}

For $\Sigma_g$ a (smooth) genus $g$ Riemann surface, we fix a basis of $H^*(\Sigma_g, \Z)$ $\{ T_0= [\Sigma_g], T_1 = [a_1], T_2 =[b_1], \dots, T_{2g-1} = [a_g], T_{2g} = [b_g], T_{2g+1} = [pt]\}$ such that 

\[ \begin{cases} \int_{\Sigma_g} [a_i] \cup [b_j] = \delta_i^j\\ \int_{\Sigma_g} a_i \cup a_j = 0 = \int_{\Sigma_g} b_i\cup b_j  \end{cases}  \] for all $1 \leq i,j \leq g$. Here, $\{[a_1],[b_1],\dots [a_g],[b_g]\}$ are a basis of $H^1(\Sigma_g, \Z)$ with the above properties. Since the only morphisms from $\PP^1 \rightarrow \Sigma_g$ are constant, we can assume that $\beta = 0 \in H_2(\Sigma_g, \Z)$. In order to be stable, this implies that $n\geq 3$. By the Point Mapping Axiom, the only possibly non-zero genus 0 Gromov-Witten invariants of $\Sigma_g$ (up to permuting the entries) are 

\begin{align*}
\left<I_{0,3,0}\right>\left( [pt],[\Sigma_g],[\Sigma_g] \right) &= 1\\
\left<I_{0,3,0}\right> \left( [\Sigma_g], [a_i],[b_i] \right) &= 1
\end{align*} 

for all $1\leq i \leq g$. This gives, for $\gamma = \sum_{i=0}^{2g+1}t_iT_i$ with formal variables $t_i$ such that $\deg t_i = \deg T_i-2$ as above,

\begin{align*}
    \Phi(\gamma) &= \frac{1}{2}t_0^2 t_2 - \sum_{j=1}^g t_0 t_{2j-1} t_{2j}
\end{align*}

where the minus sign is due to $\epsilon(\alpha)$ in the formula

\begin{align*}
    \frac{1}{n!} \left< I_{0,n,\beta} \right> (\gamma^n) &= \sum_{|\alpha|=n} \epsilon(\alpha) \left<I_{0,n,\beta} \right> (T^\alpha) \frac{t^\alpha}{\alpha!} \end{align*}

where $\epsilon(\alpha)=\pm 1$ is determined by 

\begin{align*}
        (t_0T_0)^{a_0}(t_1T_1)^{a_1} \cdots (t_mT_m)^{a_m} &= \epsilon(\alpha) T_0^{a_0}\cdots T_m^{a_m}t_0^{a_0} \cdots t_m^{a_m} 
    \end{align*}

This also implies that the big quantum cohomology ring is isomorphic to the small quantum cohomology ring, as $g_{ij}$ is given by the matrix (where labels on rows and columns are included)

\[ g_{ij} = \begin{array}{c|c|c|c|c|c|c| c}
& \left[\Sigma_g\right]& \left[a_1\right] & \left[b_1\right] & \cdots & \left[a_g\right] & \left[b_g\right] & \left[pt\right] \\
\hline
\left[\Sigma_g\right] & 0 & 0 & 0 & \cdots & 0 & 0 & 1 \\
\hline 
\left[a_1\right] & 0 & 0 & 1 & \cdots & 0 & 0 & 0 \\
\hline 
\left[b_1\right] & 0 & 1 & 0 & \cdots & 0 & 0 & 0 \\
\hline 
\vdots & \vdots & \vdots & \vdots & \ddots & \vdots & \vdots & \vdots \\
\hline 
\left[a_g\right] & 0 & 0 & 0 & \cdots & 0 & 1 & 0 \\
\hline 
\left[b_g\right] & 0 & 0 & 0 & \cdots & 1 & 0 & 0  \\
\hline 
\left[pt\right] & 1 & 0 & 0 & \cdots & 0 & 0 & 0 \\
 \end{array} = g^{ij} \]

 so that $T^0 = [pt], T^{2j-1}=T_{2j}$ and $T^{2j}=T_{2j-1}$ for $1\leq j \leq g$, and $T^{2g+1}=T_0$. Now the big (and small) quantum product agree with the usual cup product, via

 \begin{align*}
     T_0 * T_0 &= T^{2g+1}=T_0\\
     T_0*T_i &= T_i \text{ for } T_i \in \{[a_1],[b_1], \dots, [a_g],[b_g] \}\\
     T_0 * T_{2g+1} &= T_{2g+1}\\
     T_i^2 &= 0 \text{ for } 1\leq i \leq 2g\\
     [a_i]*[b_i] &= T_{2g+1} = -[b_i]*[a_i]\\
     T_i*T_{2g+1}&= 0 \text{ for all }i.      
 \end{align*}

Here we make use of the fact that the $t_i$ supercommute, as in \[ t_i t_j = (-1)^{\deg t_i \deg t_j} t_j t_i \] and $t_i^2=0$ if $t_i$ has odd degree. We define the partial derivative operator $\frac{\partial}{\partial t_i}$ by \[ \frac{\partial }{\partial t_i} (t_i^k \cdot t^\alpha) = kt_i^{k-1} \cdot t^\alpha \] for any monomial $t^\alpha \in \C[t_0, \dots, t_m]$ not involving $t_i$. It follows that \[ \frac{ \partial^2 F}{\partial t_i \partial t_j} = (-1)^{\deg t_i \deg t_j } \frac{ \partial^2 F}{\partial t_j \partial t_i}. \] 

\section{Quiver-Theoretic Approach for $J$-functions of Quiver Flag Varieties}

In this section, we will describe Kalashnikov's \cite{Kalashnikov} quiver-theoretic method for extracting Givental's cohomological small $J$-function for a Fano quiver flag zero locus given by the quiver $Q$ and representation-theoretic bundle $E_G$ with $j: X \rightarrow M_Q$ the embedding of $X$ into the ambient quiver flag variety given by a generic section of $E_G$. We give $\C \PP^1$ as an example. We then compare to the conventions in \cite{coxkatxMSAG} and recover the genus $0$ Gromov-Witten potential for $\C \PP^1$ from $J_{\PP^1}(0,z)$ in Kalashnikov's convention. In order to define Givental's small $J$-function, we'll require the following definition \cite[302]{coxkatxMSAG}. 

\begin{definition} Let $X$ be a smooth projective variety over $\C$. Given $\gamma_1, \dots, \gamma_n \in H^*(X, \Z)$ and non-negative integers $d_i$ for all $i$, the $n$-point gravitational correlators 

\[ \left< \tau_{d_1} \gamma_1, \tau_{d_2}\gamma_2, \dots, \tau_{d_n}\gamma_n \right>_{g,\beta} := \int_{ \left[ \overline{M}_{g,n}(X,\beta)\right]^{\text{virt}}} \prod_{i=1}^n (c_1 (\mathcal{L}_i)^{d_i} \cup e_i^*(\gamma_i) ) \]

\end{definition}

The $n$-point gravitational correlators satisfy, in particular, the \textbf{Dilaton Axiom}\label{axiom: dilatonAxiom} \[ \left< \tau_1, \tau_{d_1}\gamma_1, \dots, \tau_{d_n}\gamma_n \right> = (2g-2+n) \cdot \left< \tau_{d_1}\gamma_1, \dots, \tau_{d_n}\gamma_n \right>. \] 
Here $\mathcal{L}_i$ is the ``cotangent line at the $i$th marked point," i.e. the line bundle on $\overline{\mathcal{M}}_{g,n}(X,\beta)$ whose fiber over the stable map $(f: C \rightarrow X, p_1, \dots, p_n )$ is the cotangent space $T^*_{p_i}C$. \\

In order to describe $\PP^1$ as a toric quiver variety, we'll need the following definitions.

\begin{definition}\begin{itemize} \text{ } \\
    \item[(i)] \cite{kirillov2016quiver}  A \textbf{quiver} $Q = (Q_0, Q_1)$ is a collection of vertices $\{v_0, \dots, v_\rho\} = Q_0$ and edges $\{a_1, \dots, a_N \} =Q_1$ with head and tail maps 
 \[ h, t: Q_1 \rightarrow Q_0 \]

for all $a \in Q_1$. \\

\item[(ii)] The \textbf{path algebra} $\Bbbk Q$ consists of $\Bbbk-$linear combinations of composable paths in $Q$. The path algebra is $\Z$-graded, with $(\Bbbk Q)_0 = Q_0$ and $(\Bbbk Q)_1 = Q_1$. \\
\item[(iii)] A \textbf{representation of $Q$} is a choice of finite-dimensional vector space $V_i$ of finite-dimensional $\Bbbk$-vector space for each vertex, and an element $x_a\in \mathrm{Hom}_{\Bbbk-v.s.}(V_{t(a)},V_{h(a)})$ for all $a\in Q_1$ which can be composed in agreement with composable paths in $Q$. That is, \[ \text{Rep}(Q) \stackrel{\simeq}{\leftrightarrow} \Bbbk Q\text{-Mod}.\]

We emphasize that we simultaneously fix a basis for each $V_i$ as $i$ ranges over $Q_0$. 

\item[(iv)] The \textbf{dimension vector} of $M \in \text{Rep}(Q)$ is $\mathbb{r} = (r_0, \dots, r_\rho) \in \N^{\rho + 1}$.  \\
\end{itemize}
\end{definition}

Following \cite{CrawDuke2011}, we impose $r_0=1$ and require that there are no arrows to vertex $v_0$. Fix $(r_1, \dots, r_\rho) \in \N^\rho$. We can view

\[ \mathrm{Rep}(Q,\mathbb{r}) = \bigoplus_{a\in Q_1} \mathrm{Hom}(\Bbbk^{r_{t(a)}}, \Bbbk^{r_{h(a)}}).\]

There is a natural action of $\prod_{i=0}^\rho \mathrm{GL}(d_i)$ on $M\in \mathrm{Rep}(Q)$ which acts on edges in the following manner. For $w=(w_a) \in \mathrm{Rep}(Q,\mathbb{r})$ and $g=(g_0, \dots, g_\rho) \in \prod_{i=0}^\rho \mathrm{GL}(d_i)$, \[ (g\cdot w)_a = g_{h(a)}\circ w_a \circ g_{t(a)}^{-1}. \] Now $\mathbb{G}_m$ acts trivially via diagonal matrices, but by setting the first entry equal to 1, we see that \[ G = \prod_{i=1}^\rho \mathrm{GL}(d_i) \] acts effectively. 

\begin{definition}
\begin{itemize}
    \item[(v)] We use the stability condition $\theta \in \mathrm{Hom}(G, \Bbbk^*)$ given by $\theta(g) = \prod_{i=1}^\rho \det(g_i)$. 
\end{itemize}
\end{definition}

For this choice of stability condition, all $\theta$-semistable points are $\theta$-stable. Let $s_i = \sum_{a\in Q_1, h(a)=v_i} r_{t(a)}$ and view \[ \mathrm{Rep}(Q,\mathbb{r}) = \bigoplus_{i=1}^\rho \mathrm{Hom}_{\Bbbk-v.s.}(\Bbbk^{s_i}, \Bbbk^{r_i}). \] Then $w = (w_i)_{i=1}^\rho$ is $\theta$ stable if and only if each $w_i$ is surjective. 

\begin{definition}The \textbf{GIT quotient of the moduli space of $\theta$-semistable representations of $Q$ with dimension vector $\mathbb{r}$, by $G$} is \[ \mathcal{M}^{ss} \sslash_\theta G := \faktor{\mathcal{M}^{ss}(\theta,\mathbb{r})}{G}. \] 
\end{definition}

Here, we get an abelian quotient if and only if the dimension vector $\mathbb{r}=(1,1,\dots, 1)$, which \cite{AltmannHille} call ``thin sincere" representations. Otherwise, the non-commutativity of $GL(m)$ for $m>1$ gives a non-commutative quotient. To introduce more notation:

\[
\begin{tikzcd}
\circled{1} \arrow[r,"n"] & \circled{k}
\end{tikzcd}\] 

denotes the quiver 
\[\begin{tikzcd}
	{v_0 \text{ } \bullet} && {\bullet \text{ }v_1}
	\arrow["{a_1}", curve={height=-12pt}, from=1-1, to=1-3]
	\arrow["{a_2}", from=1-1, to=1-3]
	\arrow["\begin{array}{c} \vdots \\ a_n \end{array}"{marking, allow upside down}, curve={height=18pt}, from=1-1, to=1-3]
\end{tikzcd}\]

with dimension vector $\mathbb{r}=(1,k)$. This gives the Grassmanian \[ \faktor{ \mathcal{M}(\theta, \mathbb{r}=(1,k)) }{G} = \text{ }\faktor{ \{ \text{ surjections from }\Bbbk^n \rightarrow \Bbbk^k \}}{\mathrm{GL}(k) } \cong \mathrm{Gr}_\Bbbk(k,n).\]

In particular, 

\[
\begin{tikzcd}
\circled{1} \arrow[r,"n"] & \circled{1}
\end{tikzcd}\]

gives $\PP^{n-1}$, and 

\[ \begin{tikzcd}
\circled{1} \arrow[r,"n"] & \circled{k} \arrow[r,"1"] & \circled{a} \arrow[r,"1"] & \circled{b}
\end{tikzcd}\]

gives the partial flag variety $\mathcal{F}l(n,k,a,b)$ with $n>k>a>b$. Our running example will be

\[
\begin{tikzcd}
\circled{1} \arrow[r,"2"] & \circled{1}
\end{tikzcd}\]

for $\PP^1$. \cite{Kalashnikov} gives a formula for Givental's small $J$-function for zero loci in quiver flag varieties. To do this, one first constructs the $I$-function on the mirror, and then invokes mirror symmetry by equating a generating function for the $I$ function on the mirror with the small $J$-function on the original space. For zero loci in quiver flag varieties, Kalashnikov first gives the formula 

\begin{align*}
    I_{T_{M_Q}}(\tilde{d}) &= \frac{ \prod_{a\in Q_1^{ab}} \prod_{m\leq 0} (D_a + mz) }{ \prod_{a\in Q_1^{ab}} \prod_{m\leq \left< \tilde{d}, D_a \right>} (D_a + mz) } \cdot \frac{ \prod_{i=1}^\rho \prod_{j\neq k} \prod_{m\leq \left< \tilde{d}, D_{ij}-D_{ik} \right>} (D_{ij} - D_{ik} + mz ) }{ \prod_{i=1}^\rho \prod_{j\neq k} \prod_{m\leq 0} (D_{ij} - D_{ik} + mz ) }
    \end{align*}

for $\tilde{d} \in NE_1(M_{Q^{ab}})$.  When $\mathbb{r}=(1,1,\dots, 1)$, $Q = Q^{ab}$, which also implies that all vertices have second index $1$. Here $D_{ij}$ is the divisor corresponding to the tautological bundle $W_{ij}$ for vertex $ij$, and $D_a := -D_{s(a)} + D_{t(a)}$ is the divisor on $M_{Q^{ab}}$ corresponding to the arrow $a \in Q_1^{ab}$. Also, for $\PP^n, NE_1(M_{Q^{ab}}) \cong \N$. The $I$ function of the zero locus $X \subset M_Q$ is then assembled into the generating function

\[ I_{X, M_Q}(z) = \sum_{d \in NE_1(M_Q)} \sum_{\tilde{d}\mapsto d} (-1)^{\epsilon(d)} \cdot q^d \cdot I_{T_X}(\tilde{d}). \] 

For $\PP^1$, all $\epsilon(d)=1$. For $X$ a Fano quiver flag zero locus given by $(Q,E_G)$ with $j: X\rightarrow M_Q$ the embedding of $X$ into the ambient quiver flag variety, the Givental's cohomological small $J$-function is then defined by the change of coordinates

\[ J_X(0,z) := e^{-c/z} \cdot j^* I_{X,M_Q}(z). \]

For $\PP^1$, Kalashnikov's formula gives

\[ I_{T_{M_Q}}(\tilde{d}) = \frac{ \left[ \prod_{m\leq 0} ({h}+mz ) \right]^2 }{ \left[ \prod_{m\leq \left< \tilde{d}, D_a \right>}( h + mz) \right]^2 }\]

for the $I$-function on the mirror, which we assemble into the generating function 

\[ I_{\PP^1, \mathcal{M}_Q}(z) = \sum_{d\geq 0} q^d \frac{1}{ \left[ \prod_{m=1}^d (h+mz)\right]^2 }.\]

The change of coordinates to the mirror then gives

\begin{equation} \label{eqn: KalashJP1} J_{\PP^1}(0,z) = e^{-c/z} \cdot \sum_{d\geq 0} q^d \cdot \frac{ 1}{ \left[ \prod_{m=1}^d (h+mz) \right]^2 }. \end{equation} 

In the conventions of \cite{coxkatxMSAG}: if $\{T_0, \dots, T_\ell\}$ is an ordered basis for $H^*(\PP^1,\Z)$ for $X$ a smooth projective variety over $\C$ with $\{t_i\}_{i=0}^\ell$ formal variables such that $|t_i| = |T_i|-2$, then

\begin{equation} \label{eqn: coxKatzIP1} I_{\PP^1} = e^{(t_0 + t_1H )/h} \cdot \sum_{d\geq 0} \frac{ q^d}{ \left[ (H+h)(H+2h)\cdots (H+dh)\right]^2}  \end{equation}

where $d=0$ gives the summand $1$ in $J_{\PP^1}$.

\begin{definition} \label{defn: coxKatzJP1} \cite[357]{coxkatxMSAG} In the conventions of Cox-Katz, we'll use as definition for the small $J$-function for $\PP^1$ the following expression in terms of 2-point gravitational correlators: 

\[ J_{\PP^1} = e^{(t_0 + t_1H)/h} \cdot \sum_{d\geq 0} e^{dt_1} \left( \frac{ \left< \tau_{2d-1}H,1\right>_{0,d} }{h^{2d}} + \frac{ \left< \tau_{2d},1\right>_{0,d} H }{h^{2d+1} } \right) \]

where $d=0$ contributes the summand $1$ above. 

\end{definition}

From Definition~\ref{defn: coxKatzJP1}, one might ask how to recover the gravitational correlators $\left<\tau_{2d},1\right>$ from Kalashnikov's formula for $J_{\PP^1}(0,z)$. To do this, we substitute to translate between conventions in \cite{Kalashnikov} and \cite{coxhgscoordinatering}, then view $I=J$ as an instance of Givental's mirror theorem for $\PP^1$. Finally, we expand in powers of $H$ while using the fact that $H^2=0$, for $H$ the hyperplane class in the conventions of \cite{coxkatxMSAG}. That is, we first use the substitution

\begin{align*} \begin{cases} 
    z &\mapsto h, \\
    -c &\mapsto t_0 + t_1H, \\
    q^d &\mapsto e^{dt_1},\\
    h &\mapsto H. \end{cases} 
\end{align*}

This substitution now gives an equality between $J_{\PP^1}(0,z)$ from \cite{Kalashnikov} in Equation~\ref{eqn: KalashJP1} and $I_{\PP^1}$ in \cite{coxkatxMSAG} from Equation~\ref{eqn: coxKatzIP1}.\\

Second, expanding $I_{\PP^1}$ from \cite{coxkatxMSAG} in Equation~\ref{eqn: coxKatzIP1} in $H$, and using the fact that $H^2=0$ yields the following. For $d=1$ in Equation~\ref{eqn: coxKatzIP1}, we have 

\begin{align*}
\frac{e^{t_1}}{(H+h)^2} &= \frac{ e^{t_1}}{h^2 + 2hH}\\
&= \frac{ e^{t_1}}{h^2(1 - \faktor{-2H}{h})} \\
&= \frac{e^{t_1}}{h^2} \left[ 1 - \faktor{2H}{h} \right] \end{align*}

gives that \[ \left< \tau_1 H, 1\right>=1 \text{ and }\left< \tau_2, 1\right> = -2. \] Next, $d=2$ in the same Equation~\ref{eqn: coxKatzIP1} gives 

\begin{align*}
\frac{e^{2t_1}}{\left[(H+h)(H+2h)\right]^2} &= \frac{q^2}{ 4h^4 + 2Hh^3(4+2)}\\
&= \frac{q^2}{4h^4 + 12Hh^3}\\
&= \frac{q^2}{4h^4 \cdot (1 - \faktor{-3H}{h} ) }   
\end{align*} 

so that  \[ \left<\tau_3 H,1\right> = \frac{1}{4} \text{  and } \left< \tau_4, 1\right> = \frac{-3}{4}.\] At this point, the general formula becomes clear: For a given $d>1$, we'll have

\begin{align*}
\frac{q^d}{(d!)^2\cdot h^{2d} + 2H\cdot h^{2d-1}\left( \sum_{j=1}^d \frac{d!}{j}(d!) \right)} &= \frac{ q^d }{(d!)^2 \cdot h^{2d} \left( 1 - -2\cdot \sum_{j=1}^d \frac{1}{j} \cdot \frac{H}{h} \right) }\\
&= \frac{q^d}{(d!)^2 \cdot h^{2d} } \cdot \left( 1 - 2\sum_{j=1}^d \frac{1}{j} \cdot \frac{H}{h} \right) \end{align*} 

gives

\begin{itemize}
    \item[(i)] \[ \left< \tau_{2d-1}H,1\right>_{0,d} = \frac{ 1}{(d!)^2} \] and
    \item[(ii)] \cite{Pandharipande1999TheTE} 
    
    \[ \left< \tau_{2d}, 1\right>_{0,d} = \frac{-2}{(d!)^2} \cdot \left( 1 + \frac{1}{2} + \cdots + \frac{1}{d} \right). \]
\end{itemize}

From the \hyperref[axiom: dilatonAxiom]{Dilaton Axiom},

\begin{align*}
    \left<\tau_1, H\right>_{0,1} = (-2+1) \cdot \left< H \right>_{0,1} &= - \int_{ \left[M_{0,1}(\PP^1, 1) \right]^{\text{virt}}} [pt] \\
    &= - \int_{\PP^1} [pt] &\text{ from } M_{0,1}(\PP^1,1) \cong \PP^1, \\
    &&\text{ which we can also write as}\\
    &= -\int_{\PP^1} [pt] \cup [pt] \cup [pt]\\
    &= - \left< I_{0,3,0}\right>([pt], [\PP^1], [\PP^1])\\
    &= -1.
\end{align*}

We also have the \textbf{Fundamental Class Axiom}\label{axiom: FundClassAxiom}:

\[ \left< \tau_{d_1}\gamma_1, \dots, \tau_{d_{n-1}}\gamma_{n-1}, 1 \right>_{g,\beta} = \sum_{i=1}^{n-1} \left< \tau_{d_1}\gamma_1, \dots, \tau_{d_i-1}\gamma_{i-1}, \tau_{d_i - 1} \gamma_i, \dots, \tau_{d_{n-1}}\gamma_{n-1} \right>\] where any term involving $\tau_{-1}$ is taken to be zero. This gives 

\begin{align*}
    \left< \tau_1 H, 1 \right>_{0,1} &= \left< H \right>_{0,1} &\text{ by the Fundamental Class Axiom}\\
    &= \left< H,H,H\right>_{0,1} \\
    &= \left< I_{0,3,1} \right>(H,H,H) \\
    &= \left< I_{0,3,1} \right>([pt],[pt],[pt])\\
    &= 1 
\end{align*}

from Subsection~\ref{subsec: GWInvtsP1}. Both the Dilaton and Fundamental Class Axiom are valid if $n+2g\geq 4$, or $\beta\neq 0$ and $n\geq 1$. Note that $\overline{\mathcal{M}}_{0,1}(\PP^1,1) \cong \PP^1$, where the isomorphism takes the map $f: (C,p) \rightarrow \PP^1$ to $f(p)$, which identifies \[ \mathcal{L}_1 \cong \mathcal{T}^*\PP^1 \cong \mathcal{O}_{\PP^1}(-2). \] 

At this point, we now have obtained a list of all non-zero genus 0 Gromov-Witten invariants of $\PP^1$ from Kalashnikov's $J_{\PP^1}(0,z)$:

\begin{align*}
    \begin{cases} \left< I_{0,3,0}\right>([pt],[\PP^1],[\PP^1]) = \int_{\PP^1} [pt]\cup[\PP^1]\cup[\PP^1] &= 1, \text{ while others reduce to}\\
     \left<I_{0,n,1}\right>([pt]^{\bullet n}) &= 1. \end{cases}
\end{align*}

From this list, we recover the genus $0$ Gromov-Witten potential for $\PP^1$, with $\gamma = \sum_{i=0}^2 t_i T_i$ from \eqref{eqn: GWPotP1}:

\begin{align*}
    \Phi(\gamma) &= \sum_{n\geq 0}\sum_{\beta \in H_2(\PP^1, \Z) } \frac{1}{n!} \left< I_{g,n,\beta} \right> (\gamma^n) \cdot q^\beta \\
    &= \frac{1}{2}t_0^2 t_1 + \sum_{n\geq 0} \frac{t_1^n}{n!} \\
    &= \frac{1}{2} t_0^2 t_1 + e^{t_1} \cdot q^\ell. 
\end{align*}

\section{Tropical approach}

\subsection{Counting lattice paths in lattice polygons} The degree $d$ genus $0$ Gromov-Witten invariants for $\mathbb{P}^2$ can also be computed by enumerating certain lattice paths, up to multiplicity, of the Newton polygon $\Delta_d=Conv\{(0,0), (d, 0), (0,d)\}$. These lattice paths compute $N_{trop}(g, \Delta)$, the number of, not necessarily irreducible, tropical curves of degree $d$, genus $g$ passing through $3d+g-1$ points in tropical general position in $\mathbb{R}^2$. Mikhalkin, in \cite{MikEnumTropGeom}, proves that each such curve is the Hausdorff limit of the amoeba $Log(V_{k_{\alpha}})$ of a subsequence of $J_t-$holomorphic curves $V_{k_{\alpha}}$ of the same degree and genus passing through $3d+g-1$ points in general position in $(\mathbb{C}^*)^2$. 

A precursor to this result is the paper \cite{MikCountingCurvesLatticePaths}, in which Mikhalkin describes the algorithm for counting the desired lattice paths. Fix a linear function $\lambda: \mathbb{R}^2\rightarrow \mathbb{R}$ with irrational slope (equivalently, such that $\lambda$ restricted to the lattice $\mathbb{Z}^2$ is injective). Let $d>0$ be an integer and let $n=\dfrac{d(d+3)}{2}-1$. A lattice path is a path $\gamma: [0,n]\rightarrow \mathbb{R}^2$ such that $j\in 0,\dots, n$, $\gamma(j)\in \mathbb{Z}^2$, and $\gamma$ restricted to $[j-1,j]$ is affine-linear. The desired lattice paths in $\Delta_d$ are those that increase with respect to $\lambda$. For example, if $\epsilon$ is small and irrational, and $\lambda=x-\epsilon y$, then the path $\gamma: [0,8]\rightarrow \mathbb{R}^2$ in $\Delta_3$ in Figure \ref{lattice_p1} with $\gamma(0)=(0,3)$ and $\gamma(8)=(3,0)$ is $\lambda$-increasing (one can directly compute that $\lambda(\gamma(0))=-3\epsilon, \lambda(\gamma(1))=-2\epsilon, \dots, \lambda(\gamma(7))=2, \lambda(\gamma(8))=3$).

\FloatBarrier
\begin{figure}[h]
\centering
\begin{tikzpicture}[
    x=0.72cm,
    y=0.72cm,
    line cap=round,
    line join=round,
    tri/.style={black,line width=0.75pt},
    latticePurple/.style={draw=purple!85!magenta,line width=2.0pt},
    plusGreen/.style={draw=green!60!black,line width=2.0pt},
    minusCyan/.style={draw=cyan!75!black,line width=2.0pt},
    ptPurple/.style={circle,fill=purple!85!magenta,inner sep=0pt,minimum size=5.2pt},
    ptGreen/.style={circle,fill=green!60!black,inner sep=0pt,minimum size=5.2pt},
    ptCyan/.style={circle,fill=cyan!75!black,inner sep=0pt,minimum size=5.2pt},
    every node/.style={font=\large}
]

\begin{scope}[shift={(0,0)}]
    \draw[tri] (0,0) -- (0,3) -- (3,0) -- cycle;

    \draw[latticePurple]
        (0,3) -- (0,2) -- (0,1) -- (1,2) -- (1,1) -- (1,0)
        -- (2,1) -- (2,0) -- (3,0);

    \foreach \P in {(0,3),(0,2),(0,1),(1,2),(1,1),(1,0),(2,1),(2,0),(3,0)}
        \node[ptPurple] at \P {};

    \node[anchor=south east] at (-0.15,3.15) {$p$};
    \node[anchor=north west] at (3.12,0.05) {$q$};
\end{scope}

\begin{scope}[shift={(4.85,0)}]
    \draw[tri] (0,0) -- (0,3) -- (3,0) -- cycle;

    \fill[green!35,opacity=0.65] (0,3) -- (1,2) -- (0,1) -- (0,2) -- cycle;
    \fill[green!35,opacity=0.65] (1,2) -- (2,1) -- (1,0) -- (1,1) -- cycle;
    \fill[green!35,opacity=0.65] (2,1) -- (3,0) -- (2,0) -- cycle;

    \draw[latticePurple]
        (0,3) -- (0,2) -- (0,1) -- (1,2) -- (1,1) -- (1,0)
        -- (2,1) -- (2,0) -- (3,0);

    \draw[plusGreen] (0,3) -- (1,2) -- (2,1) -- (3,0);

    \foreach \P in {(0,2),(0,1),(1,1),(1,0),(2,0)}
        \node[ptPurple] at \P {};
    \foreach \P in {(0,3),(1,2),(2,1),(3,0)}
        \node[ptGreen] at \P {};

    \node[anchor=south east] at (-0.15,3.15) {$p$};
    \node[anchor=north west] at (3.12,0.05) {$q$};
\end{scope}

\begin{scope}[shift={(9.70,0)}]
    \draw[tri] (0,0) -- (0,3) -- (3,0) -- cycle;

    \fill[cyan!30,opacity=0.65] (0,1) -- (1,2) -- (1,1) -- (1,0) -- (0,0) -- cycle;
    \fill[cyan!30,opacity=0.65] (1,0) -- (2,1) -- (2,0) -- cycle;

    \draw[latticePurple]
        (0,1) -- (1,2) -- (1,1) -- (1,0) -- (2,1) -- (2,0);

    \draw[minusCyan] (0,3) -- (0,2) -- (0,1) -- (0,0) -- (1,0) -- (2,0) -- (3,0);

    \foreach \P in {(1,2),(1,1),(2,1)}
        \node[ptPurple] at \P {};
    \foreach \P in {(0,3),(0,2),(0,1),(0,0),(1,0),(2,0),(3,0)}
        \node[ptCyan] at \P {};

    \node[anchor=south east] at (-0.15,3.15) {$p$};
    \node[anchor=north west] at (3.12,0.05) {$q$};
\end{scope}

\end{tikzpicture}
\caption{A lattice path $\gamma$ in $\Delta_3$ and the corresponding decompositions into $\Delta_+$ and $\Delta_-$.} 
\label{lattice_p1}
\end{figure}    
\FloatBarrier


 Let $\gamma$ be any lattice path in a polygon $\Delta$. Let $\gamma(0)=p$ and $\gamma(n)=q$. Let $\alpha_+$ be the lattice path along $\partial \Delta$ that travels clockwise from $p$ to $q$, and $\alpha_-$ be the lattice path that goes counterclockwise from $p$ to $q$. Let $n^+=\alpha_+^{-1}(q)$ and $n^-=\alpha_-^{-1}(q)$. Split $\Delta$ into two polygons $\Delta_+$ and $\Delta_-$, where $\partial \Delta_+=\gamma \cup \alpha_+$ and $\partial \Delta_-=\gamma \cup \alpha_-$ (Figure \ref{lattice_p1}). Define $\mu_{\pm}(\alpha_{\pm})=1$ as the multiplicity of $\alpha_{\pm}$. The algorithm below, taken from \cite{MikCountingCurvesLatticePaths}, computes the multiplicity $\mu_+(\gamma)$ recursively. The algorithm that computes $\mu_-(\gamma)$ is identical by replacing $+$ with $-$ everywhere.

 \begin{itemize}
     \item[I.] Let $k$ be the smallest integer in $\{0,1, \dots, n\}$ such that $\gamma(k)$ is a locally convex vertex of $\Delta_+$.
     \begin{itemize}
         \item[(i)] Let $\gamma'$ be the lattice path of length $n-1$ such that $\gamma'(j)=\gamma(j)$ for $j<k$ and with $\gamma'(j)=\gamma(j+1)$ for $k\leq j \leq n-1$. If $n-1<n^+$, then $\mu_+(\gamma')=0$.
         \item[(ii)] Let $\gamma''$ be the lattice path of length $n$ such that $\gamma''(j)=\gamma(j)$ for $j\neq k$ and with $\gamma''(k)=\gamma(k-1)+\gamma(k+1)-\gamma(k))$. If $\gamma''(k)\notin \Delta$, then $\mu_+(\gamma'')=0$.
     \end{itemize}
     \item[II.] Define $\mu_+(\gamma)=2Area(T)\mu_+(\gamma')+\mu_+(\gamma'')$, where $T$ is the triangle with vertices $\gamma(k-1), \gamma(k), \gamma(k+1)$.     \item[III.] Repeat I and II for $\gamma'$ and $\gamma''$ until $\mu_+=0$ or until the lattice path converges to $\alpha_+$.
 \end{itemize}

\begin{definition} Let $\gamma:[0,n]\rightarrow \mathbb{R}^2$ be a $\lambda$-increasing lattice path in the lattice polygon $\Delta$. Then the \textbf{multiplicity of $\gamma$} is $\mu(\gamma)=\mu_+(\gamma)\mu_-(\gamma)$.
\end{definition}
 Figure \ref{p2_n03_mu+} displays the computation for $\mu_+(\gamma)$, $\gamma$ the lattice path in $\Delta_3$ in Figure \ref{lattice_p1}; Figure \ref{p2_n03_mu-} displays the computation for $\mu_-(\gamma)$. Together, they give $\mu(\gamma)=\mu_+(\gamma)\mu_-(\gamma)=1(2)=2$.
 
 The upshot is that the multiplicity of each $\lambda$-increasing lattice path in $\Delta$ contributes to the count of $N(g, \Delta)$, which when $g=0$ and $\Delta=d$ is the genus 0 Gromov-Witten invariant $N_d$ described in Sections~\ref{sec: GWinvtsP2}--\ref{sec: 4.5}. 
 This is summarized in the following two theorems.

 \begin{theorem}[Theorem 1 in \cite{MikEnumTropGeom}] \label{thm: 7.2} For a generic configuration $\mathcal{P}$ of $3d+g-1$ points in general position in $\mathbb{R}^2$, 
 $$N^{irr}_{trop}(g, \Delta)=N^{irr}(g,\Delta) \text{ and } N_{trop}(g, \Delta)= N(g, \Delta)$$

 Moreover, there exists a configuration $\mathcal{Q}\subset (\mathbb{C}^*)^2$ of $3d+g-1$ points in general position such that for every tropical curve $C$ of genus $g$, degree $\Delta$, passing through $\mathcal{P}$, we have $mult(C)$ distinct complex curves of genus $g$, degree $\Delta$, passing through $\mathcal{Q}$.
     
 \end{theorem}

\begin{theorem}[Theorem 2 in \cite{MikEnumTropGeom}] \label{thm: 7.3} $N_{trop}(g, \Delta)$ is the number, counting multiplicity, of $\lambda$-increasing lattice paths $\gamma: [0,n]\rightarrow \Delta$ connecting $p$ and $q$. Moreover, there exists a configuration $\mathcal{P}\subset \mathbb{R}^2$ of $3d+g-1$ points in tropical general position such that each $\lambda$-increasing path encodes the numbers of tropical curves of genus $g$, degree $\Delta$, passing through $\mathcal{P}$. Distinct curves enumerate distinct paths.

\end{theorem}

Thus, to compute $N_3$ for $\mathbb{P}^2$, we can compute $N^{irr}_{trop}(g, \Delta_3)$. The lattice path $\gamma$ in figures \ref{lattice_p1}, \ref{p2_n03_mu+}, and \ref{p2_n03_mu-} contributes to this count, enumerating $2$ tropical curves passing through $8$ points in tropical general position in $\mathbb{R}^2$ by Theorem~\ref{thm: 7.3}. Therefore, by Theorem~\ref{thm: 7.3}, this lattice path enumerates $2$ complex curves passing through $8$ points in general position in $\mathbb{P}^2$. Computing the multiplicity of the other $\lambda$-increasing lattice paths in $\Delta_3$, we would get a total count of $12$ curves. This is the familiar number $N_3$ for $\mathbb{P}^2$.

More recently, there have been improvements in algorithms that compute generating series for descendant GW-invariants and Hurwitz numbers for elliptic curves using their tropical counterparts, which the interested reader can find, for example, in \cite{aga2023algorithmsgromovwitteninvariantselliptic}.

\FloatBarrier
\begin{figure}[ht]
\centering
%
\begingroup
\definecolor{muppgreen}{RGB}{99,184,75}
\definecolor{mupplightgreen}{RGB}{182,221,172}
\definecolor{mupppurple}{RGB}{164,4,198}
\definecolor{muppred}{RGB}{236,54,36}
\definecolor{mupppink}{RGB}{254,6,212}
\definecolor{mupphighlight}{RGB}{254,253,148}
\definecolor{muppgray}{RGB}{105,105,105}

\tikzset{
  muppaxis/.style={draw=muppgray,line width=1.05pt,line cap=round,line join=round},
  muppgreenedge/.style={draw=muppgreen,line width=3.3pt,line cap=round,line join=round},
  mupppurpedge/.style={draw=mupppurple,line width=2.35pt,line cap=round,line join=round},
  mupparrow/.style={draw=black,line width=1.4pt,-{Stealth[length=6pt,width=5pt]},line cap=round,line join=round},
  mupprednote/.style={text=muppred,font=\fontsize{15}{17}\selectfont},
  muppgreennote/.style={text=muppgreen,font=\fontsize{18}{20}\selectfont},
  muppblacknote/.style={text=black,font=\fontsize{12}{14}\selectfont},
}

\def\muppPt#1#2#3#4#5{({#1+#3*(#4)},{#2+#3*(#5)})}
\def\muppGdot#1{\fill[muppgreen] #1 circle[radius=4.2pt];}
\def\muppPdot#1{\fill[mupppurple] #1 circle[radius=3.7pt];}
\def\muppReddot#1{\fill[muppred] #1 circle[radius=4.0pt];}
\def\muppRedcircle#1{\draw[muppred,line width=2.2pt] #1 circle[radius=7.0pt];}

\def\muppBase#1#2#3{%
  \draw[muppaxis] \muppPt{#1}{#2}{#3}{0}{0}--\muppPt{#1}{#2}{#3}{0}{3}--\muppPt{#1}{#2}{#3}{3}{3};
  \draw[muppgreenedge] \muppPt{#1}{#2}{#3}{0}{0}--\muppPt{#1}{#2}{#3}{3}{3};
  \foreach \i in {0,1,2,3}{\muppGdot{\muppPt{#1}{#2}{#3}{\i}{\i}}}
  \node[black,anchor=south east,font=\fontsize{16}{18}\selectfont,inner sep=0pt] at \muppPt{#1}{#2}{#3}{-.10}{-.08} {$p$};
  \node[black,anchor=north west,font=\fontsize{16}{18}\selectfont,inner sep=0pt] at \muppPt{#1}{#2}{#3}{3.08}{3.05} {$q$};
}

\def\muppPathgamma#1#2#3#4{
  \fill[mupplightgreen,opacity=.82]
    \muppPt{#1}{#2}{#3}{0}{0}--\muppPt{#1}{#2}{#3}{0}{1}--\muppPt{#1}{#2}{#3}{1}{1}--cycle;
  \fill[mupplightgreen,opacity=.82]
    \muppPt{#1}{#2}{#3}{1}{1}--\muppPt{#1}{#2}{#3}{1}{3}--\muppPt{#1}{#2}{#3}{2}{2}--cycle;
  \fill[mupplightgreen,opacity=.82]
    \muppPt{#1}{#2}{#3}{2}{2}--\muppPt{#1}{#2}{#3}{2}{3}--\muppPt{#1}{#2}{#3}{3}{3}--cycle;
  \fill[mupppurple,opacity=.30]
    \muppPt{#1}{#2}{#3}{0}{1}--\muppPt{#1}{#2}{#3}{0}{2}--\muppPt{#1}{#2}{#3}{1}{1}--cycle;
  \muppBase{#1}{#2}{#3}
  \draw[mupppurpedge] \muppPt{#1}{#2}{#3}{0}{0}--\muppPt{#1}{#2}{#3}{0}{2};
  \draw[mupppurpedge] \muppPt{#1}{#2}{#3}{0}{1}--\muppPt{#1}{#2}{#3}{1}{1};
  \draw[mupppurpedge] \muppPt{#1}{#2}{#3}{0}{2}--\muppPt{#1}{#2}{#3}{1}{1};
  \draw[mupppurpedge] \muppPt{#1}{#2}{#3}{1}{1}--\muppPt{#1}{#2}{#3}{1}{3};
  \draw[mupppurpedge] \muppPt{#1}{#2}{#3}{1}{3}--\muppPt{#1}{#2}{#3}{2}{2};
  \draw[mupppurpedge] \muppPt{#1}{#2}{#3}{2}{2}--\muppPt{#1}{#2}{#3}{2}{3};
  \draw[mupppurpedge] \muppPt{#1}{#2}{#3}{2}{3}--\muppPt{#1}{#2}{#3}{3}{3};
  \foreach \a/\b in {0/1,0/2,1/2,1/3,2/3}{\muppPdot{\muppPt{#1}{#2}{#3}{\a}{\b}}}
  \muppRedcircle{#4}
}

\def\muppPathgammaprime#1#2#3#4{
  \fill[mupplightgreen,opacity=.82]
    \muppPt{#1}{#2}{#3}{1}{1}--\muppPt{#1}{#2}{#3}{1}{3}--\muppPt{#1}{#2}{#3}{2}{2}--cycle;
  \fill[mupplightgreen,opacity=.82]
    \muppPt{#1}{#2}{#3}{2}{2}--\muppPt{#1}{#2}{#3}{2}{3}--\muppPt{#1}{#2}{#3}{3}{3}--cycle;
  \fill[mupppurple,opacity=.30]
    \muppPt{#1}{#2}{#3}{0}{0}--\muppPt{#1}{#2}{#3}{0}{1}--\muppPt{#1}{#2}{#3}{1}{1}--cycle;
  \muppBase{#1}{#2}{#3}
  \draw[mupppurpedge] \muppPt{#1}{#2}{#3}{0}{0}--\muppPt{#1}{#2}{#3}{0}{1};
  \draw[mupppurpedge] \muppPt{#1}{#2}{#3}{0}{1}--\muppPt{#1}{#2}{#3}{1}{1};
  \draw[mupppurpedge] \muppPt{#1}{#2}{#3}{1}{1}--\muppPt{#1}{#2}{#3}{1}{3};
  \draw[mupppurpedge] \muppPt{#1}{#2}{#3}{1}{3}--\muppPt{#1}{#2}{#3}{2}{2};
  \draw[mupppurpedge] \muppPt{#1}{#2}{#3}{2}{2}--\muppPt{#1}{#2}{#3}{2}{3};
  \draw[mupppurpedge] \muppPt{#1}{#2}{#3}{2}{3}--\muppPt{#1}{#2}{#3}{3}{3};
  \foreach \a/\b in {0/1,1/2,1/3,2/3}{\muppPdot{\muppPt{#1}{#2}{#3}{\a}{\b}}}
  \muppRedcircle{#4}
}

\def\muppPathdouble#1#2#3{
  \fill[mupplightgreen,opacity=.82]
    \muppPt{#1}{#2}{#3}{0}{0}--\muppPt{#1}{#2}{#3}{0}{1}--\muppPt{#1}{#2}{#3}{1}{1}--cycle;
  \fill[mupplightgreen,opacity=.82]
    \muppPt{#1}{#2}{#3}{1}{1}--\muppPt{#1}{#2}{#3}{1}{3}--\muppPt{#1}{#2}{#3}{2}{2}--cycle;
  \fill[mupplightgreen,opacity=.82]
    \muppPt{#1}{#2}{#3}{2}{2}--\muppPt{#1}{#2}{#3}{2}{3}--\muppPt{#1}{#2}{#3}{3}{3}--cycle;
  \muppBase{#1}{#2}{#3}
  \draw[mupppurpedge] \muppPt{#1}{#2}{#3}{0}{0}--\muppPt{#1}{#2}{#3}{0}{1};
  \draw[mupppurpedge] \muppPt{#1}{#2}{#3}{0}{1}--\muppPt{#1}{#2}{#3}{1}{1};
  \draw[mupppurpedge] \muppPt{#1}{#2}{#3}{1}{1}--\muppPt{#1}{#2}{#3}{1}{3};
  \draw[mupppurpedge] \muppPt{#1}{#2}{#3}{1}{3}--\muppPt{#1}{#2}{#3}{2}{2};
  \draw[mupppurpedge] \muppPt{#1}{#2}{#3}{2}{2}--\muppPt{#1}{#2}{#3}{2}{3};
  \draw[mupppurpedge] \muppPt{#1}{#2}{#3}{2}{3}--\muppPt{#1}{#2}{#3}{3}{3};
  \foreach \a/\b in {0/1,1/2,1/3,2/3}{\muppPdot{\muppPt{#1}{#2}{#3}{\a}{\b}}}
}

\def\muppPathafterone#1#2#3#4{
  \fill[mupplightgreen,opacity=.82]
    \muppPt{#1}{#2}{#3}{1}{1}--\muppPt{#1}{#2}{#3}{1}{2}--\muppPt{#1}{#2}{#3}{2}{2}--cycle;
  \fill[mupppurple,opacity=.30]
    \muppPt{#1}{#2}{#3}{1}{2}--\muppPt{#1}{#2}{#3}{1}{3}--\muppPt{#1}{#2}{#3}{2}{2}--cycle;
  \fill[mupplightgreen,opacity=.82]
    \muppPt{#1}{#2}{#3}{2}{2}--\muppPt{#1}{#2}{#3}{2}{3}--\muppPt{#1}{#2}{#3}{3}{3}--cycle;
  \muppBase{#1}{#2}{#3}
  \draw[mupppurpedge] \muppPt{#1}{#2}{#3}{0}{0}--\muppPt{#1}{#2}{#3}{0}{1.55};
  \draw[mupppurpedge] \muppPt{#1}{#2}{#3}{1}{1}--\muppPt{#1}{#2}{#3}{1}{3};
  \draw[mupppurpedge] \muppPt{#1}{#2}{#3}{1}{3}--\muppPt{#1}{#2}{#3}{2}{2};
  \draw[mupppurpedge] \muppPt{#1}{#2}{#3}{2}{2}--\muppPt{#1}{#2}{#3}{2}{3};
  \draw[mupppurpedge] \muppPt{#1}{#2}{#3}{2}{3}--\muppPt{#1}{#2}{#3}{3}{3};
  \foreach \a/\b in {1/2,1/3,2/3}{\muppPdot{\muppPt{#1}{#2}{#3}{\a}{\b}}}
  \muppRedcircle{#4}
}

\def\muppPathafterthree#1#2#3#4{
  \fill[mupppurple,opacity=.30]
    \muppPt{#1}{#2}{#3}{1}{1}--\muppPt{#1}{#2}{#3}{1}{2}--\muppPt{#1}{#2}{#3}{2}{2}--cycle;
  \fill[mupplightgreen,opacity=.82]
    \muppPt{#1}{#2}{#3}{2}{2}--\muppPt{#1}{#2}{#3}{2}{3}--\muppPt{#1}{#2}{#3}{3}{3}--cycle;
  \muppBase{#1}{#2}{#3}
  \draw[mupppurpedge] \muppPt{#1}{#2}{#3}{0}{0}--\muppPt{#1}{#2}{#3}{0}{1.55};
  \draw[mupppurpedge] \muppPt{#1}{#2}{#3}{1}{1}--\muppPt{#1}{#2}{#3}{1}{2};
  \draw[mupppurpedge] \muppPt{#1}{#2}{#3}{1}{2}--\muppPt{#1}{#2}{#3}{2}{2};
  \draw[mupppurpedge] \muppPt{#1}{#2}{#3}{2}{2}--\muppPt{#1}{#2}{#3}{2}{3};
  \draw[mupppurpedge] \muppPt{#1}{#2}{#3}{2}{3}--\muppPt{#1}{#2}{#3}{3}{3};
  \foreach \a/\b in {1/2,2/3}{\muppPdot{\muppPt{#1}{#2}{#3}{\a}{\b}}}
  \muppRedcircle{#4}
}

\def\muppPathprealpha#1#2#3#4{
  \fill[mupppurple,opacity=.30]
    \muppPt{#1}{#2}{#3}{2}{2}--\muppPt{#1}{#2}{#3}{2}{3}--\muppPt{#1}{#2}{#3}{3}{3}--cycle;
  \muppBase{#1}{#2}{#3}
  \draw[mupppurpedge] \muppPt{#1}{#2}{#3}{0}{0}--\muppPt{#1}{#2}{#3}{0}{1.45};
  \draw[mupppurpedge] \muppPt{#1}{#2}{#3}{2}{2}--\muppPt{#1}{#2}{#3}{2}{3};
  \draw[mupppurpedge] \muppPt{#1}{#2}{#3}{2}{3}--\muppPt{#1}{#2}{#3}{3}{3};
  \muppPdot{\muppPt{#1}{#2}{#3}{2}{3}}
  \muppRedcircle{#4}
}

\def\muppPathalpha#1#2#3{
  \muppBase{#1}{#2}{#3}
  \draw[mupppurpedge] \muppPt{#1}{#2}{#3}{0}{0}--\muppPt{#1}{#2}{#3}{0}{1.45};
  \draw[mupppurpedge] \muppPt{#1}{#2}{#3}{2.55}{3}--\muppPt{#1}{#2}{#3}{3}{3};
}

\noindent\resizebox{0.93\linewidth}{!}{%
\begin{tikzpicture}[x=1bp,y=-1bp]
  \path[use as bounding box] (-70,0) rectangle (760,780);
  \fill[white] (-70,0) rectangle (760,780);

  \muppPathgamma{326}{40}{22}{\muppPt{326}{40}{22}{0}{2}}
  \node[mupprednote,anchor=east] at (322,83) {$\gamma(2)$};
  \node[text=mupppink,font=\bfseries\fontsize{13}{15}\selectfont]
    at \muppPt{326}{40}{22}{.31}{1.33} {$T$};
  \node[muppgreennote,anchor=west] at (362,60) {$\Delta^{+}$};
  \node[text=mupppink,font=\fontsize{15}{17}\selectfont,anchor=west] (muppAreaT)
    at (410,70) {$\mathrm{Area}(T)=\frac12$};
  \coordinate (muppTtarget) at \muppPt{326}{40}{22}{.76}{1.22};
  \draw[draw=mupppink,line width=.9pt,-{Stealth[length=4.5pt,width=3.5pt]}]
    (muppAreaT.west) to[out=185,in=8] (muppTtarget);
  \node[muppgreennote,anchor=west] at (324,129) {$\alpha^{+}:[0,3]\to\partial\Delta_{3}$};
  \node[muppgreennote,anchor=west] at (331,160) {$n^{+}=3$};
  \node[mupprednote,anchor=west,font=\fontsize{15}{17}\selectfont] at (337,192) {$k=2$};

  \draw[mupparrow] (307,139) .. controls (299,154) and (292,159) .. (285,163);
  \draw[mupparrow] (401,139) .. controls (409,153) and (417,158) .. (425,163);

  \muppPathgammaprime{230}{162}{24}{\muppPt{230}{162}{24}{0}{1}}
  \node[black,font=\fontsize{15}{17}\selectfont,anchor=east] at (206,202) {$\gamma'$};
  \node[mupprednote,anchor=west,font=\fontsize{15}{17}\selectfont] at (249,249) {$k=1$};
  \draw[mupparrow] (231,239) .. controls (225,252) and (220,255) .. (215,262);
  \draw[mupparrow] (287,239) .. controls (293,250) and (300,255) .. (307,262);

  \muppPathdouble{456}{161}{24}
  \node[black,font=\fontsize{15}{17}\selectfont,anchor=east] at (433,203) {$\gamma''$};
  \node[muppblacknote,anchor=west,align=left] at (420,258)
    {$\begin{aligned}
       \gamma''(2)\;&{}=(0,2)+(1,2)-(0,1)\\[-1pt]
       &{}=(1,3)\ {\color{muppred}\times}
      \end{aligned}$};
  \node[fill=mupphighlight,rounded corners=3pt,opacity=.75,text opacity=1,text=muppred,font=\fontsize{17}{19}\selectfont,anchor=west,inner xsep=3pt,inner ysep=1pt] at (438,316) {$\mu_{+}(\gamma'')=0.$};

  \node[muppblacknote,anchor=west,align=left] at (286,286)
    {$\begin{aligned}
       (\gamma')''(1)\;&{}=(0,3)+(1,2)-(0,2)\\[-1pt]
       &{}=(1,3)\ {\color{muppred}\times}
      \end{aligned}$};
  \node[fill=mupphighlight,rounded corners=3pt,opacity=.75,text opacity=1,text=muppred,font=\fontsize{17}{19}\selectfont,anchor=west,inner xsep=3pt,inner ysep=1pt] at (292,330) {$\mu_{+}\big((\gamma')''\big)=0$};

  \muppPathafterone{165}{278}{22}{\muppPt{165}{278}{22}{1}{3}}
  \node[black,font=\fontsize{15}{17}\selectfont,anchor=east] at (145,315) {$(\gamma')'$};
  \node[mupprednote,anchor=west,font=\fontsize{15}{17}\selectfont] at (175,363) {$k=3$};

  \draw[mupparrow] (160,351) .. controls (147,367) and (132,378) .. (116,392);
  \draw[mupparrow] (225,350) .. controls (245,362) and (270,372) .. (298,388);

  \node[muppblacknote,anchor=west,align=left] at (308,398)
    {$\begin{aligned}
       ((\gamma')')''(3)\;&{}=(1,1)+(2,1)-(0,1)\\[-1pt]
       &{}=(3,1)\ {\color{muppred}\times}
      \end{aligned}$};
  \node[text=muppred,font=\fontsize{15}{17}\selectfont,anchor=west] at (350,443) {$\mu_{+}\big(((\gamma')')''\big)=0$};

  \muppPathafterthree{111}{391}{23}{\muppPt{111}{391}{23}{1}{2}}
  \node[black,font=\fontsize{15}{17}\selectfont,anchor=east] at (92,432) {$((\gamma')')'$};
  \node[mupprednote,anchor=west,font=\fontsize{15}{17}\selectfont] at (120,477) {$k=2$};

  \draw[mupparrow] (110,466) .. controls (98,486) and (84,501) .. (68,517);
  \draw[mupparrow] (167,466) .. controls (187,482) and (212,495) .. (240,512);

  \node[muppblacknote,anchor=west,align=left] at (248,520)
    {$\begin{aligned}
       &(1,2)+(2,1)-(1,1)\\[-1pt]
       &\qquad=(2,2)\ {\color{muppred}\times}
      \end{aligned}$};

  \muppPathprealpha{63}{520}{24}{\muppPt{63}{520}{24}{2}{3}}
  \node[black,font=\fontsize{15}{17}\selectfont,anchor=east] at (45,562) {$(((\gamma')')')'$};
  \node[mupprednote,anchor=west,font=\fontsize{15}{17}\selectfont] at (80,610) {$k=3$};

  \draw[mupparrow] (70,595) .. controls (58,615) and (49,633) .. (42,650);
  \draw[mupparrow] (132,595) .. controls (170,611) and (220,623) .. (272,636);

  \node[muppblacknote,anchor=west] at (282,646)
    {$(2,1)+(3,0)-(2,0)=(3,1)\ {\color{muppred}\times}$};

  \muppPathalpha{42}{654}{26}
  \node[muppgreennote,anchor=west] at (160,723)
    {$\alpha^{+}:[0,3]\to\partial\Delta$};
  \node[
    fill=mupphighlight,
    rounded corners=4pt,
    opacity=.75,
    text opacity=1,
    text=muppgreen,
    font=\fontsize{17}{19}\selectfont,
    anchor=west,
    inner xsep=4pt,
    inner ysep=2pt
  ] at (160,752) {$\mu_{+}(\alpha^{+})=1$};
\end{tikzpicture}
}%
\endgroup

\caption{Computation of $\mu_+(\gamma)$.}
\label{p2_n03_mu+}
\end{figure}
\FloatBarrier

\FloatBarrier
\begin{figure}[ht]
\centering
\begingroup
\definecolor{mmcyan}{RGB}{22,190,211}
\definecolor{mmlightcyan}{RGB}{164,232,241}
\definecolor{mmpurple}{RGB}{154,31,190}
\definecolor{mmlightpurple}{RGB}{177,153,229}
\definecolor{mmred}{RGB}{239,72,55}
\definecolor{mmhighlight}{RGB}{255,248,134}
\definecolor{mmgray}{RGB}{72,72,72}

\tikzset{
  mmaxis/.style={draw=mmcyan,line width=2.4pt,line cap=round,line join=round},
  mmdiag/.style={draw=mmgray!78,line width=1.15pt,line cap=round},
  mmpath/.style={draw=black,line width=1.45pt,line cap=round,line join=round},
  mmpurpleedge/.style={draw=mmpurple,line width=1.65pt,line cap=round,line join=round},
  mmcyanedge/.style={draw=mmcyan!80!black,line width=1.25pt,line cap=round,line join=round},
  mmarrow/.style={draw=black,line width=1.35pt,-{Stealth[length=7pt,width=5.5pt]},line cap=round},
  mmtitle/.style={text=black,font=\fontsize{19}{21}\selectfont},
  mmrednote/.style={text=mmred,font=\fontsize{17}{19}\selectfont},
  mmcyannote/.style={text=mmcyan!85!black,font=\fontsize{17}{19}\selectfont},
  mmeq/.style={text=black,font=\fontsize{16.5}{19.5}\selectfont,align=left,fill=white,inner sep=2pt},
  mmbig/.style={text=black,font=\fontsize{17}{20}\selectfont},
}

\def\mmCdot#1#2{\fill[mmcyan] (#1,#2) circle[radius=4.0pt];}
\def\mmBdot#1#2{\fill[black] (#1,#2) circle[radius=3.75pt];}
\def\mmPdot#1#2{\fill[mmpurple] (#1,#2) circle[radius=3.75pt];}
\def\mmSelect#1#2{\draw[mmred,line width=2.2pt] (#1,#2) circle[radius=9.0pt];}

\def\mmFrame{%
  \draw[mmaxis] (0,0)--(0,60)--(60,60);
  \draw[mmdiag] (0,0)--(60,60);
  \node[mmtitle,anchor=south east,inner sep=1pt] at (-3,-1) {$p$};
  \node[mmtitle,anchor=north west,inner sep=1pt] at (63,60) {$q$};
}
\def\mmAllCyanBoundary{%
  \foreach \y in {0,20,40,60}{\mmCdot{0}{\y}}
  \foreach \x in {20,40,60}{\mmCdot{\x}{60}}
}
\def\mmAllBlackBoundary{%
  \foreach \y in {0,20,40,60}{\mmBdot{0}{\y}}
  \foreach \x in {20,40,60}{\mmBdot{\x}{60}}
}

\def\mmRoot{%
  \fill[mmlightcyan] (0,40)--(0,60)--(20,60)--(20,40)--cycle;
  \fill[mmlightcyan] (20,60)--(40,40)--(40,60)--cycle;
  \fill[mmlightpurple] (0,40)--(20,20)--(20,40)--cycle;
  \mmFrame
  \draw[mmpurpleedge] (0,0)--(0,40)--(20,20)--(20,40)--(20,60)--(40,40)--(40,60);
  \draw[mmpurpleedge] (0,40)--(20,40);
  \mmAllCyanBoundary
  \mmPdot{20}{20}\mmPdot{20}{40}\mmPdot{40}{40}
  \mmSelect{20}{20}
}

\def\mmGammaPrime{%
  \fill[mmlightcyan] (0,40)--(0,60)--(20,60)--cycle;
  \fill[mmlightcyan] (20,60)--(40,40)--(40,60)--cycle;
  \fill[mmlightpurple] (0,40)--(20,40)--(20,60)--cycle;
  \mmFrame
  \draw[mmpath] (0,0)--(0,40)--(20,40)--(20,60)--(40,40)--(40,60)--(60,60);
  \draw[mmcyanedge] (0,40)--(20,60);
  \draw[mmpurpleedge] (0,40)--(20,40)--(20,60);
  \draw[mmpurpleedge] (20,60)--(40,40)--(40,60);
  \mmCdot{0}{60}
  \foreach \y in {0,20,40}{\mmBdot{0}{\y}}
  \foreach \x in {20,40,60}{\mmBdot{\x}{60}}
  \mmBdot{20}{40}\mmBdot{40}{40}
  \mmSelect{20}{40}
}

\def\mmGammaPrimePrimeLeft{%
  \fill[mmlightcyan] (0,40)--(0,60)--(20,60)--cycle;
  \fill[mmlightpurple] (20,60)--(40,40)--(40,60)--cycle;
  \mmFrame
  \draw[mmpath] (0,0)--(0,40)--(20,60);
  \draw[mmpurpleedge] (20,60)--(40,40)--(40,60);
  \draw[mmpath] (40,60)--(60,60);
  \mmCdot{0}{60}
  \foreach \y in {0,20,40}{\mmBdot{0}{\y}}
  \foreach \x in {20,40,60}{\mmBdot{\x}{60}}
  \mmBdot{40}{40}
  \mmSelect{40}{40}
}

\def\mmGammaPrimePrimeTerminal{%
  \fill[mmlightcyan] (0,40)--(0,60)--(20,60)--cycle;
  \mmFrame
  \draw[mmpath] (0,0)--(0,40)--(20,60)--(60,60);
  \mmCdot{0}{60}
  \foreach \y in {0,20,40}{\mmBdot{0}{\y}}
  \foreach \x in {20,40,60}{\mmBdot{\x}{60}}
}

\def\mmGammaPrimeDouble{%
  \fill[mmlightpurple] (20,60)--(40,40)--(40,60)--cycle;
  \mmFrame
  \draw[mmpath] (0,0)--(0,60)--(60,60);
  \draw[mmpurpleedge] (20,60)--(40,40)--(40,60);
  \mmAllBlackBoundary
  \mmBdot{40}{40}
  \mmSelect{40}{40}
}

\def\mmGammaDouble{%
  \fill[mmlightpurple] (0,60)--(20,40)--(20,60)--cycle;
  \fill[mmlightcyan] (20,60)--(40,40)--(40,60)--cycle;
  \mmFrame
  \draw[mmpurpleedge] (0,0)--(0,60)--(20,40)--(20,60)--(40,40)--(40,60);
  \mmAllCyanBoundary
  \mmPdot{20}{40}\mmPdot{40}{40}
  \mmSelect{20}{40}
}

\def\mmGammaDoublePrime{%
  \fill[mmlightpurple] (20,60)--(40,40)--(40,60)--cycle;
  \mmFrame
  \draw[mmpurpleedge] (0,0)--(0,60);
  \draw[mmpurpleedge] (20,60)--(40,40)--(40,60);
  \mmAllCyanBoundary
  \mmPdot{40}{40}
  \mmSelect{40}{40}
}

\def\mmAlphaMinus{%
  \mmFrame
  \mmAllCyanBoundary
}

\resizebox{0.98\textwidth}{!}{%
\begin{tikzpicture}[x=1pt,y=-1pt]
  \path[use as bounding box] (0,0) rectangle (1160,900);

  \begin{scope}[shift={(525,35)}]\mmRoot\end{scope}
  \node[mmrednote,anchor=west] at (552,50) {$\gamma(3)$};
  \node[mmcyannote,anchor=west] at (608,78) {$\Delta^{-}$};
  \node[mmcyannote,anchor=west] at (493,116)
    {$\alpha^{-}:[0,6]\longrightarrow 2\Delta_{3}$};
  \node[mmcyannote,anchor=west] at (535,145) {$n^{-}=6$};
  \node[mmrednote,anchor=west] at (540,174) {$k=3$};

  \draw[mmarrow] (500,190) .. controls (455,217) and (392,226) .. (326,247);
  \draw[mmarrow] (615,190) .. controls (664,217) and (722,226) .. (790,247);

  \begin{scope}[shift={(245,245)}]\mmGammaPrime\end{scope}
  \node[mmtitle,anchor=east] at (232,265) {$\gamma'$};
  \node[mmrednote,anchor=west] at (266,326) {$k=3$};

  \begin{scope}[shift={(800,245)}]\mmGammaDouble\end{scope}
  \node[mmtitle,anchor=west] at (875,265) {$\gamma''$};
  \node[mmeq,anchor=west] at (748,343)
    {$\begin{aligned}
      \gamma''(3)&=(0,1)+(1,1)-(1,2)\\[-1pt]
                  &=(0,0)
    \end{aligned}$};
  \node[mmrednote,anchor=west] at (827,385) {$k=4$};

  \draw[mmarrow] (260,344) .. controls (225,376) and (179,393) .. (137,421);
  \draw[mmarrow] (306,344) .. controls (350,372) and (404,393) .. (458,421);

  \node[mmeq,anchor=west] at (355,346)
    {$\begin{aligned}
      (\gamma')''(3)&=(0,1)+(1,0)-(1,1)\\[-1pt]
                     &=(0,0)
    \end{aligned}$};

  \begin{scope}[shift={(95,430)}]\mmGammaPrimePrimeLeft\end{scope}
  \node[mmtitle,anchor=east] at (84,454) {$\bigl(\gamma'\bigr)'$};
  \node[mmrednote,anchor=west] at (120,511) {$k=4$};

  \begin{scope}[shift={(455,430)}]\mmGammaPrimeDouble\end{scope}
  \node[mmtitle,anchor=west] at (530,450) {$\bigl(\gamma'\bigr)''$};
  \node[mmrednote,anchor=west] at (479,511) {$k=7$};

  \draw[mmarrow] (805,411) .. controls (775,433) and (740,444) .. (707,466);
  \draw[mmarrow] (857,411) .. controls (896,433) and (934,444) .. (977,466);

  \node[mmeq,anchor=west] at (917,455)
    {$\begin{aligned}
      &(0,0)+(1,0)-(1,1)\\[-1pt]
      &\qquad=(0,-1)\;\textcolor{mmred}{\times}
    \end{aligned}$};

  \begin{scope}[shift={(680,475)}]\mmGammaDoublePrime\end{scope}
  \node[mmtitle,anchor=west] at (753,493) {$\bigl(\gamma''\bigr)'$};
  \node[mmrednote,anchor=west] at (704,556) {$k=5$};

  \draw[mmarrow] (108,531) .. controls (91,562) and (77,584) .. (65,612);
  \draw[mmarrow] (151,531) .. controls (199,558) and (255,579) .. (307,607);
  \node[mmeq,anchor=west] at (228,548)
    {$\begin{aligned}
       &(1,0)+(2,0)-(2,1)\\[-1pt]
       &\qquad=(1,-1)\;\textcolor{mmred}{\times}
     \end{aligned}$};

  \draw[mmarrow] (472,531) .. controls (450,563) and (428,584) .. (408,612);
  \draw[mmarrow] (519,531) .. controls (555,555) and (589,574) .. (622,600);
  \node[mmeq,anchor=west] at (591,590)
    {$(1,-1)\;\textcolor{mmred}{\times}$};

  \draw[mmarrow] (693,576) .. controls (675,604) and (659,624) .. (645,650);
  \draw[mmarrow] (737,576) .. controls (778,600) and (820,620) .. (866,646);
  \node[mmeq,anchor=west] at (890,655)
    {$\begin{aligned}
       &(1,0)+(2,0)-(2,1)\\[-1pt]
       &\qquad=(1,-1)\;\textcolor{mmred}{\times}
     \end{aligned}$};

  \begin{scope}[shift={(35,625)}]\mmGammaPrimePrimeTerminal\end{scope}
  \node[mmtitle,anchor=east] at (24,650) {$\bigl((\gamma')'\bigr)'$};
  \node[mmbig,anchor=west] at (75,702) {$n=5<n^{-}$};
  \node[mmbig,anchor=west,fill=mmhighlight,rounded corners=3pt,inner xsep=4pt,inner ysep=2pt]
    at (18,744) {$\Rightarrow\mu_{-}\!\left(\bigl((\gamma')'\bigr)'\right)=0$};
  \node[mmbig,anchor=west,fill=mmhighlight,rounded corners=3pt,inner xsep=4pt,inner ysep=2pt]
    at (22,782) {$\Rightarrow\mu_{-}\!\left((\gamma')'\right)=0$};

  \begin{scope}[shift={(385,625)}]\mmAlphaMinus\end{scope}
  \node[mmbig,anchor=west] at (350,708)
    {$\bigl((\gamma')''\bigr)'=\alpha^{-}$};
  \node[mmbig,anchor=west] at (396,744) {$\mu_{-}(\alpha^{-})=1$};
  \node[mmbig,anchor=west] at (337,780)
    {$\Rightarrow\mu_{-}\!\left(\bigl((\gamma')''\bigr)'\right)=1$};
  \node[mmbig,anchor=west,fill=mmhighlight,rounded corners=3pt,inner xsep=4pt,inner ysep=2pt]
    at (349,818) {$\Rightarrow\mu_{-}\!\left((\gamma')''\right)=1$};

  \begin{scope}[shift={(620,670)}]\mmAlphaMinus\end{scope}
  \node[mmcyannote,anchor=west] at (705,731) {$\alpha^{-}$};
  \node[mmbig,anchor=west,fill=mmhighlight,rounded corners=3pt,inner xsep=4pt,inner ysep=2pt]
    at (704,767) {$\mu_{-}(\alpha^{-})=1$};

\end{tikzpicture}%
}
\endgroup
\caption{Computation of $\mu_-(\gamma)$.}
\label{p2_n03_mu-}
\end{figure}
\FloatBarrier

\addtocontents{toc}{\protect\setcounter{tocdepth}{-1}}
\section*{Acknowledgments}

The first author is grateful to Dave Auckly, Lino Amorim, Alexander Givental, Mohammed Abouzaid, Jesse Wolfson, Chris Woodward, Edray Goins, Nick Sheridan, Weihong Xu, and Heather Lee for helpful discussions. 

\printbibliography
\addtocontents{toc}{\protect\setcounter{tocdepth}{2}} 

\end{document}